\def\bWt{{\mathbb{W}_\tau}}
\def\bVh{{\mathbb{X}_h}}
\def\bVxh{{\mathbb{V}_h}}
\def\bVht{{\mathbb{V}_{h \tau}}}
\def\bVt{{\mathbb{U}_\tau}}
\def\bBal{{\mathbb{B}_{h \tau}^\alpha}}
\def\bBalh{{\mathbb{B}_{h}^\alpha}}
\def\al{{\alpha}}
\def\Bs{{B^s}}
\def\Bal{{B^\alpha}}
\def\tribar{\vert\thickspace\!\!\vert\thickspace\!\!\vert}
\theoremstyle{plain}
\newtheorem{theorem}{Theorem}[section]
\newtheorem{lemma}{Lemma}[section]
\theoremstyle{theorem}
\newtheorem{remark}{Remark}[section]
\numberwithin{equation}{section}
\begin{document}
	
\title[Space-Time FEM for Fractional Diffusion Problems]{Space-Time Petrov-Galerkin FEM for Fractional Diffusion Problems}
\author{Beiping Duan}
\address{School of Mathematics and Statistics, Central South University, 410083 Changsha, P.R. China and
Department of Mathematics, Texas A\&M University, College Station, TX 77843-3368, USA}
\email{duanbeiping@hotmail.com}
\author{Bangti Jin}
\address{Department of Computer Science, University College London, Gower Street, London WC1E 6BT, UK}
\email{bangti.jin@gmail.com, b.jin@ucl.ac.uk}
\author{Raytcho Lazarov}
\address{Department of Mathematics, Texas A\&M University, College Station, TX 77843-3368, USA and
Institute of Mathematics and Informatics, Bulgarian Academy of Sciences, Acad. Georgi Bonchev str., block 8,  1113 Sofia, Bulgaria}
\email{lazarov@math.tamu.edu, raytcho.lazarov@gmail.com}
\author{Joseph Pasciak}
\address{Department of Mathematics, Texas A\&M University, College Station, TX 77843-3368, USA}
\email{pasciak@math.tamu.edu}
\author{Zhi Zhou}
\address{Department of Applied Physics and Applied Mathematics,
Columbia University, 500 W. 120th Street, New York, NY 10027, USA}
\email{zhizhou0125@gmail.com}
	
\subjclass[2000]{Primary 65M60, 65M15}

\date{started May 9, 2016; today is \today}

\begin{abstract}
We present and analyze a space-time Petrov-Galerkin finite element method for a time-fractional diffusion equation
involving a Riemann-Liouville fractional derivative of order $\alpha\in(0,1)$ in time and zero initial data.
We derive a proper weak formulation
involving different solution and test spaces and show the inf-sup condition for the bilinear form and thus its well-posedness.
Further, we develop a novel finite element formulation, show the well-posedness of the discrete problem, and
establish error bounds in both energy and $L^2$ norms for the finite element solution. In the proof
of the discrete inf-sup condition, a certain nonstandard $L^2$ stability property of the $L^2$ projection operator plays a key role.
We provide extensive numerical examples to verify the convergence of the method.
\end{abstract}
\keywords{space-time finite element method, Petrov-Galerkin method, fractional diffusion, error estimates}

\maketitle

\section{Introduction}\label{sec:intro}

In this work we develop and analyze a novel space-time Petrov-Galerkin formulation for time-fractional diffusion.
Let $\Omega\subset \mathbb{R}^d$ ($d=1,2,3$) be a bounded convex domain with a polygonal boundary $\partial\Omega$.
Consider the following initial boundary value problem for the function $u(x,t)$:
\begin{equation}\label{orig}
\begin{aligned}
{_0\partial_t^\alpha} u -\Delta  u &= f,\quad\mbox{in } Q_T:=\Omega \times [0,T],\\
u(x,t)&=0, \quad \mbox{on } \partial \Omega\times (0,T],\\
u(x,0)&=0 ,\quad \mbox{in } \Omega,
\end{aligned}
\end{equation}
where $f$ is a given source term, and $T>0$ is a given final time.
Here $_0\partial_t^\alpha u$ denotes the left-sided Riemann-Liouville fractional derivative of order
$\alpha \in (0,1)$ in $t$, cf. \eqref{eqn:RiemannCaputo} below.

The interest in the  model \eqref{orig} is motivated by fractional calculus
and its numerous applications related to anomalous diffusion, e.g., underground flow, thermal diffusion in fractal domains,
dynamics of protein molecules, and heat conduction with memory, to name just a few. At a microscopic level, anomalously slow
diffusion (also known as subdiffusion) processes can be described by continuous time
random walk with a heavy-tailed waiting time distribution, and
the corresponding macroscopic model is a diffusion equation with a fractional-order derivative in time, cf. (\ref{orig}).
We refer interested readers to \cite{metzler2014anomalous} for a comprehensive overview of various mathematical models,
physical backgrounds, and an extensive list of applications in physics, engineering and biology.

For standard parabolic problems, it is customary to apply time-stepping schemes \cite{Thomee:2006}.
However, space-time discretizations have gained some popularity  in the last decade.
These studies are mostly motivated by the goal to obtain efficient and convergent numerical methods
without any regularity assumptions \cite{DiPietroErn2010,RiviereGirault2016} or to design
efficient space-time adaptive algorithms
\cite{Maubach_1989,ewing1990finite,NeumullerSteinbach:2011,SchwabStevenson:2009} and
high-order schemes for general parabolic equations \cite{bank2017arbitrary}.
In the past decades, time stepping methods have also been very popular for problems involving
fractional derivatives in time
(see e.g., \cite{JinLazarovZhou:2016sisc,JinLiZhou:2016ima,Lubich:1988,MustaphaAbadallah:2014}
and references therein). % {\color{red}\cite{MustaphaAbadallah:2014} is DG????}.
However, due to the non-locality of the fractional derivative $_0\partial_t^\alpha u$, at each time step
one has to use the numerical solutions at all preceding time levels. Thus,
the advantages of time stepping schemes, compared to space-time schemes,
are not as pronounced as in the case of standard parabolic problems, and it is
natural to consider time-space discretization.

In this work we present a space-time variational (weak) formulation for problem \eqref{orig} and show
an inf-sup condition in Lemma \ref{lem:inf-sup}. Starting from the weak form we develop a novel discretization
that is based on tensor product meshes in time and space. The spatial domain $\Omega$ is discretized by
a quasi-uniform triangulation with a mesh size $h$, while in time by a
uniform mesh with step-size $\tau$. The approximation $u_{h \tau}$
is sought in the tensor product space $ \mathbb{X}_h \otimes \bVt$,
where $ \mathbb{X}_h $  is the space of continuous piecewise linear
functions in the spacial variable $x$ and  $ \bVt$
is the space of fractionalized piecewise constant functions in the time variable $t$. The test space
is a tensor product space $ \mathbb{X}_h \otimes \bWt$,
where $ \bWt $ is the space of piecewise constant functions in time, cf. \eqref{eqn:fy}.
We establish an inf-sup condition for the discrete formulation, using  % the $L^2$-stability of
the $L^2$-projection from $\bVt$ to $\bWt$, cf. Lemma \ref{ptau-stability}.
It is worth noting that the constant in the $L^2$-stability of of this projection
depends on the fractional order $\alpha$ and
deteriorates as $\alpha \to 1$, confirmed by our
computations in Table \ref{tab:Pi-bound}. Thus, for standard parabolic
problems ($\alpha=1$), it depends on the time step size $\tau$, leading to an
undesirable CFL-condition, a fact established in \cite{LarssonMolteni:2016}.
A distinct algorithmic feature of the proposed approach is that it leads to a time-stepping
like scheme, and thus admits an efficient practical implementation.

Optimal-order error estimates in both energy and $L^2(Q_T)$ norms are provided under suitable temporal
regularity of the source term $f$ in Theorems \ref{thm:err-energy} and \ref{thm:err-L2}.
The error analysis is carried out in two steps. First, we introduce a space semidiscrete
approximation $u_h$ and derive sharp error bounds for $u - u_h$ in both $\Bal(Q_T)$-
and $L^2(Q_T)$-norms, by applying the inf-sup condition for the semidiscrete problem
and an approximation result from \cite{JinLazarovPasciakZhou:2015}.
Second, we bound the difference $ u_h - u_{h \tau}$. This is achieved by a careful study
of the initial value problem of the fractional ODE ${_0\partial_t^\alpha} u + \lambda  u = f$, $\lambda >0$.
The uniform (with respect to $\lambda$) stability of the ODE and its optimal approximation
in the space $\bVt$ play a key role in the error analysis. % of problem \eqref{orig}.
Then by expanding $u_h(t)$ and $u_{h \tau}$ in eigenfunctions of the discrete Laplacian, and using the result
for the fractional ODE, in Theorem \ref{thm:err-L2}, we obtain the desired error estimates for
$f \in \widetilde H_L^s(0,T; L^2(\Omega))$, $ 0 \le s \le 1$. In particular, for $f \in L^2(Q_T)$, we have
\begin{equation*}
\| u - u_{h \tau}\|_{L^2(Q_T)} \le c (\tau^\alpha + h^2) \|f\|_{L^2(Q_T)}.
\end{equation*}

The rest of the paper is organized as follows. In Section \ref{sec:space-time}, we
recall preliminaries from fractional calculus, derive the
space-time variational formulation, and analyze its well-posedness and the
solution regularity pickup. In Section \ref{sec:FEM}, we develop a Petrov-Galerkin
FEM based on the variational formulation and a tensor product
mesh, establish a
discrete inf-sup condition and discuss the resulting linear algebraic formulation. The
convergence analysis is given in Sections \ref{sec:conv-ODE} and \ref{sec:conv-PDE}
for fractional ODEs and PDEs, respectively. Some numerical results that illustrate our
theoretical analysis are presented in Section \ref{sec:numerics}.

Throughout, the notation $c$, with or without a subscript, denotes a generic constant,
which may change from one line to another but which is always independent of the spatial
mesh size $h$ and time step size $\tau$.
We will use the following convention: for a function space $S$ (dependent of the
variable $t$ or/and $x$), the notations $\mathbb{S}_\tau$ and  $\mathbb{S}_h$
denote the time- and space-discrete counterpart, respectively, and $\mathbb{S}_{h\tau}$ for the
space-time discrete counterpart. \\

%%%%%%%%%%%%%%%%%%%%%%%%%%%%%%
\section{Time-space formulation}\label{sec:space-time}
%%%%%%%%%%%%%%%%%%%%%%%%%%%%%%
In this section we develop a space-time variational formulation,
and analyze its well-posedness.

\subsection{Notation and preliminaries}
First, we recall some preliminary facts and
notations  from fractional calculus.
For any $\gamma > 0$ and $ u \in L^2(0,T)$, we define the left-sided and right-sided Riemann-Liouville
fractional integral operators, i.e., $_0\hspace{-0.3mm}I^{\gamma}_t$ and $_tI_T^\gamma$, of order $\gamma$ respectively by
\begin{equation}\label{eqn:RL-int}
({_0I^\gamma_t} u) (t)= \frac 1{\Gamma(\gamma)} \int_0^t (t-s)^{\gamma-1} u(s)ds\quad
\mbox{and}\quad
({_tI^\gamma_T} u) (t)= \frac 1{\Gamma(\gamma)}\int_t^T (s-t)^{\gamma-1}u(s)\,ds,
\end{equation}
where $\Gamma(\cdot)$ is Euler's Gamma function defined by $\Gamma(z)=\int_0^\infty s^{z-1}e^{-s}ds$ for $\Re z>0$.

For any $\beta>0$ with $k-1 < \beta < k$, $k\in\mathbb{N}^+$, the (formal)
left-sided and right-sided Riemann-Liouville fractional derivative of order $\beta$ are respectively defined by
\begin{equation}\label{eqn:RiemannCaputo}
_0\partial_t^\beta u =\frac {d^k} {d t^k}({_0I^{k-\beta}_t} u)\quad \mbox{and} \quad
{_t\partial_T^\beta } u =(-1)^k\frac {d^k} {d t^k}({_tI^{k-\beta}_T} u),
\end{equation}
These fractional-order derivatives are well defined for sufficiently smooth functions.
	
Next we introduce the space $\widetilde{H}_L^s(0,T) $ (respectively $\widetilde{H}_R^s(0,T) $),
which consists of functions whose extension by zero belong to $H^{s}(-\infty,T)$ (respectively
$H^s(0,\infty)$) \cite{Grisvard}. We have the following useful identity  \cite[pp. 76, Lemma 2.7]{KilbasSrivastavaTrujillo:2006}
\begin{equation}\label{eqn:adjoint}
\int_0^T ({_0\partial_t^\alpha u(t)}) v(t) \ dt = \int_0^T u(t) ({_t\partial_T^\alpha v(t)})\ dt
\quad \forall u \in \widetilde{H}_L^\alpha(0,T),\, v \in  \widetilde{H}_R^\alpha(0,T).
\end{equation}

On the cylinder $Q_T=\Omega\times(0,T)$, we define the  $L^2(Q_T)$-norm
in a standard way:
\begin{equation*}
	\langle u, v \rangle_{L^2(Q_T)} = \int_0^T \int_{\Omega} uv \,  dx dt\quad \forall u,v\in L^2(Q_T).
\end{equation*}
%{\color{red}In our variational reformulation \eqref{eq:forma}, this notation is used to denotes the duality pairing between $L^2(0,T;H_0^1(\Omega))$ and
%$L^2(0,T;H^{-1}(\Omega))$???}
The notation $(\cdot, \cdot)_{L^2(\Omega)}$ denotes the duality pairing between
$H^1_0(\Omega)$  and its dual $H^{-1}(\Omega)$,
also the inner product in $L^2(\Omega)$.
For functions $u,v \in L^2(Q_T)$, additionally for each $ t \in (0,T)$, $u(t), v(t) \in H^1_0(\Omega)$,
we use the standard definition of Dirichlet form
\begin{equation*}
  D(u,v) =  \langle \nabla u, \nabla v \rangle_{L^2(Q_T)} .
\end{equation*}

Further, on $Q_T$, we introduce the following Bochner spaces:
\begin{equation*}
	\begin{aligned}
	L^2&:= L^2(Q_T)  =L^2(0,T;L^2(\Omega)) \quad \mbox{with norm} \quad
	\|v \|_{L^2(Q_T)}^2 = \int_{Q_T} v^2 dx dt,    \\
	V &:= V(Q_T) = L^2(0,T; H^1_0(\Omega))  \quad \mbox{with norm} \quad
	\| v \|_V^2=
	        D(v,v) = \int_0^{T} \int_\Omega |\nabla v (t)|^2 dx dt,\\
	V^*&:= V(Q_T)^* = L^2(0,T; H^{-1}(\Omega))\quad \mbox{with norm}  ~~
	\|v \|_{V^*}= \sup_{ \phi \in V} \frac{ \langle v, \phi \rangle_{L^2(Q_T)}}{\|\phi\|_{V}}.
	\end{aligned}
\end{equation*}
We can use an equivalent shorthand notation $L^2(0,T;X(\Omega))$ for these norms
\[
\| v \|_{L^2(0,T;X(\Omega ))}^2: =  \int_0^T  \| v(t, \cdot) \|^2_{X(\Omega )}  dt.
\]
Below we will also use $\langle\cdot,\cdot\rangle_{L^2(Q_T)}$ for the duality pairing between $V$ and $V^*$.
For any $0<s<1$, we define the function space $B^s(Q_T)$ by
\begin{equation*}
   \Bs(Q_T)=\widetilde{H}_L^s(0,T; H^{-1}(\Omega)) \cap L^2(0,T;H_0^{1}(\Omega)).
\end{equation*}
The space is endowed with the following norm
\begin{equation}\label{BL-norm}
\| v \|^2_{\Bs(Q_T)} = \| {_0\partial_t^s} v \|^2_{V^*} + D(v,v).
\end{equation}
\begin{lemma}\label{lem:Bs-norm}
For any  $v \in B^\alpha(Q_T)$ with $\alpha\in(0,1)$, there holds $\| v \|_{\widetilde{H}_L^\alpha(0,T; H^{-1}(\Omega))} \sim \| {_0\partial_t^\alpha} u \|_{V^*}$.
\end{lemma}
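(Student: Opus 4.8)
The plan is to reduce the assertion to a one-dimensional (scalar-in-space) norm equivalence via the spectral decomposition of the Dirichlet Laplacian, and then to prove that scalar equivalence by Fourier analysis of the zero extension. Let $\{(\lambda_j,\phi_j)\}_{j\ge1}$ be the eigenpairs of $-\Delta$ on $\Omega$ with homogeneous Dirichlet data, normalized so that $\{\phi_j\}$ is an orthonormal basis of $L^2(\Omega)$. Writing $v(x,t)=\sum_j v_j(t)\phi_j(x)$ with $v_j(t)=(v(\cdot,t),\phi_j)_{L^2(\Omega)}$, and recalling $\|w\|_{H^{-1}(\Omega)}^2=\sum_j\lambda_j^{-1}|w_j|^2$, one finds that the fractional derivative acts modewise, $_0\partial_t^\alpha v=\sum_j({_0\partial_t^\alpha}v_j)\phi_j$, so that $\|{_0\partial_t^\alpha}v\|_{V^*}^2=\sum_j\lambda_j^{-1}\|{_0\partial_t^\alpha}v_j\|_{L^2(0,T)}^2$, while the Bochner norm decomposes analogously as $\|v\|_{\widetilde H_L^\alpha(0,T;H^{-1}(\Omega))}^2=\sum_j\lambda_j^{-1}\|v_j\|_{\widetilde H_L^\alpha(0,T)}^2$ (using the Fourier characterization of $H^\alpha(\mathbb{R};H^{-1}(\Omega))$ of the zero extension). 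Thus it suffices to establish the scalar equivalence $\|w\|_{\widetilde H_L^\alpha(0,T)}\sim\|{_0\partial_t^\alpha}w\|_{L^2(0,T)}$ for $w\in\widetilde H_L^\alpha(0,T)$ with constants depending only on $\alpha$ and $T$ (in particular independent of $j$), after which the weighted series reassemble to give the claim.

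For the scalar statement I would pass to the zero extension $\tilde w$ of $w$ to all of $\mathbb{R}$, which by the definition of $\widetilde H_L^\alpha(0,T)$ lies in $H^\alpha(\mathbb{R})$, is supported in $[0,T]$, and satisfies $\|w\|_{\widetilde H_L^\alpha(0,T)}\sim\|\tilde w\|_{H^\alpha(\mathbb{R})}$. Because $\tilde w$ vanishes on $(-\infty,0)$, the left-sided Riemann--Liouville derivative based at $0$ coincides on $(0,\infty)$ with the one based at $-\infty$, and the latter is diagonalized by the Fourier transform, $\widehat{{_0\partial_t^\alpha}\tilde w}(\xi)=(i\xi)^\alpha\hat{\tilde w}(\xi)$. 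Hence $g:={_0\partial_t^\alpha}\tilde w$ is supported in $[0,\infty)$ and $\|g\|_{L^2(0,\infty)}^2=\int_{\mathbb{R}}|\xi|^{2\alpha}|\hat{\tilde w}(\xi)|^2\,d\xi\sim|\tilde w|_{H^\alpha(\mathbb{R})}^2$. Since $g|_{(0,T)}={_0\partial_t^\alpha}w$, the bound $\|{_0\partial_t^\alpha}w\|_{L^2(0,T)}\le\|g\|_{L^2(0,\infty)}\le c\|w\|_{\widetilde H_L^\alpha(0,T)}$ is immediate, which, combined with the control of the full $H^\alpha$-norm by its seminorm on functions supported in $[0,T]$, yields one direction.

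The main obstacle is the reverse inequality, since restricting $g$ to $(0,T)$ discards the tail $\|g\|_{L^2(T,\infty)}$; the crux is to show this tail cannot absorb a fixed fraction of $|\tilde w|_{H^\alpha(\mathbb{R})}^2$. For $t>T$ one has the explicit smooth representation $g(t)=-\frac{\alpha}{\Gamma(1-\alpha)}\int_0^T(t-s)^{-\alpha-1}w(s)\,ds$, a convolution against an integrable, rapidly decaying kernel on $(T,\infty)$; a Schur-test (or Young-type) estimate then gives $\|g\|_{L^2(T,\infty)}\le c\|w\|_{L^2(0,T)}$. Invoking the inversion identity $w={_0I_t^\alpha}({_0\partial_t^\alpha}w)$ valid on $\widetilde H_L^\alpha(0,T)$ together with the $L^2$-boundedness of $_0I_t^\alpha$ on $(0,T)$ yields $\|w\|_{L^2(0,T)}\le c\|{_0\partial_t^\alpha}w\|_{L^2(0,T)}$, whence $\|g\|_{L^2(T,\infty)}^2\le c\|{_0\partial_t^\alpha}w\|_{L^2(0,T)}^2$ and finally $\|w\|_{\widetilde H_L^\alpha(0,T)}^2\sim\|g\|_{L^2(0,\infty)}^2\le c\,\|{_0\partial_t^\alpha}w\|_{L^2(0,T)}^2$. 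This mapping property is precisely the scalar estimate established in \cite{JinLazarovPasciakZhou:2015}, whose constants depend only on $\alpha$ and $T$; being uniform in $j$, the weighted series sum to complete the proof. I expect the delicate point to be tracking the uniformity of the constant through the kernel estimate as $\alpha\to1/2$ and $\alpha\to1$, though this does not affect the fixed-$\alpha$ statement asserted here.
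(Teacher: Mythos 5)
Your reduction to the scalar equivalence $\| w \|_{\widetilde{H}_L^\alpha(0,T)} \sim \| {_0\partial_t^\alpha} w \|_{L^2(0,T)}$ via the Dirichlet eigenfunction expansion is fine, and it is essentially what the paper means when it says the assertion ``follows from the definition of the norms.'' The difference is that the paper simply \emph{cites} the scalar equivalence (from \cite{GorenfloLuchkoYamamoto:2015} or \cite{Jin_MathComp_2015variational}), whereas you attempt to prove it, and that is where your argument has a genuine gap. The space $\widetilde{H}_L^\alpha(0,T)$ is defined by extension by zero \emph{to the left} only, i.e.\ membership of the extension in $H^\alpha(-\infty,T)$; nothing is required at the right endpoint $t=T$. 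Consequently your very first step --- that the two-sided zero extension $\tilde w$ to all of $\mathbb{R}$ lies in $H^\alpha(\mathbb{R})$ with comparable norm --- fails for $\alpha \ge 1/2$: take $w$ equal to a nonzero constant near $t=T$ (and vanishing suitably near $t=0$); then $w \in \widetilde{H}_L^\alpha(0,T)$ for every $\alpha\in(0,1)$, but its two-sided zero extension has a jump at $T$ and is not in $H^\alpha(\mathbb{R})$ once $\alpha\ge 1/2$.

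The same example breaks your tail estimate $\|g\|_{L^2(T,\infty)} \le c\|w\|_{L^2(0,T)}$. The Schur/Young test does not apply to the kernel $(t-s)^{-\alpha-1}$ on $(T,\infty)\times(0,T)$, since $\int_0^T (t-s)^{-\alpha-1}\,ds = \alpha^{-1}\bigl[(t-T)^{-\alpha}-t^{-\alpha}\bigr]$ blows up as $t\to T^+$; and indeed if $w(T^-)\neq 0$ then $g(t)\sim c\,w(T^-)(t-T)^{-\alpha}$ near $t=T^+$, which is square-integrable only for $\alpha<1/2$. So the ``tail cannot absorb a fixed fraction'' step is not merely delicate --- the quantity $\|g\|_{L^2(T,\infty)}$ can be infinite for admissible $w$ when $\alpha\ge 1/2$. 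The standard fix (used in the cited references, and mirrored in the paper's own Lemma \ref{move-alpha}) is to avoid the two-sided zero extension altogether: either work on the half-line $(-\infty,T)$ with only the left zero extension, or extend $w$ past $T$ by even reflection $w(2T-t)$ so that no artificial singularity is created at the right endpoint. For $\alpha<1/2$ your argument can be repaired, but as written it does not cover the full range $\alpha\in(0,1)$ claimed in the lemma.
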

\begin{proof}
By either \cite[Theorem 3.1]{GorenfloLuchkoYamamoto:2015} or \cite[Theorem 3.1]{Jin_MathComp_2015variational},
the norm equivalence $\|v(t, \cdot )\|_{\widetilde H_L^\alpha (0,T)}
\sim\| {{}_0\partial _t^\alpha v(t, \cdot )}\|_{{L^2}(0,T)}$ holds. Then the desired assertion follows
from the definition of the norms.
\end{proof}

The next two results give non-negativity of the fractional
integral and derivative operators.
\begin{lemma}\label{lem:pdf}
For any $v \in V$, we have $D( {_0I_t^{\alpha}} v ,  v ) \ge 0$.
\end{lemma}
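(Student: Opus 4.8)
The plan is to peel off the spatial variable and reduce the claim to a one–dimensional positivity property of the fractional integral in time. Since ${_0I_t^\alpha}$ acts only on the temporal variable, it commutes with the spatial gradient, so that $\nabla({_0I_t^\alpha}v)={_0I_t^\alpha}(\nabla v)$. An application of Fubini's theorem then gives
\[
D({_0I_t^\alpha}v,v)=\sum_{i=1}^d\int_\Omega\Big[\int_0^T\big({_0I_t^\alpha}w_i\big)(t)\,w_i(t)\,dt\Big]\,dx,
\]
where $w_i(x,\cdot):=\partial_{x_i}v(x,\cdot)$ and ${_0I_t^\alpha}w_i$ is understood as the temporal integral applied for fixed $x$. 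Because $v\in V=L^2(0,T;H_0^1(\Omega))$, Fubini guarantees $w_i(x,\cdot)\in L^2(0,T)$ for a.e. $x\in\Omega$, so it suffices to prove the scalar estimate $\int_0^T({_0I_t^\alpha}w)(t)\,w(t)\,dt\ge0$ for every real $w\in L^2(0,T)$; integrating over $\Omega$ and summing in $i$ then yields the assertion.

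For the scalar estimate I would exploit the complete monotonicity of the convolution kernel through the identity $\tfrac{t^{\alpha-1}}{\Gamma(\alpha)}=c_\alpha\int_0^\infty\lambda^{-\alpha}e^{-\lambda t}\,d\lambda$, valid for $t>0$ and $\alpha\in(0,1)$ with $c_\alpha:=(\Gamma(\alpha)\Gamma(1-\alpha))^{-1}>0$. This writes the kernel as a superposition of decaying exponentials with strictly positive weight $c_\alpha\lambda^{-\alpha}$, and interchanging the order of integration reduces matters to showing, for each fixed $\lambda>0$, that
\[
J(\lambda):=\int_0^T\!\!\int_0^t e^{-\lambda(t-s)}w(s)\,w(t)\,ds\,dt\ge0 .
\]
Setting $\phi(t):=\int_0^t e^{\lambda s}w(s)\,ds$, so that $\phi(0)=0$ and $e^{\lambda t}w(t)=\phi'(t)$, one finds $J(\lambda)=\tfrac12\int_0^T e^{-2\lambda t}\tfrac{d}{dt}\phi(t)^2\,dt$, and a single integration by parts gives $J(\lambda)=\tfrac12 e^{-2\lambda T}\phi(T)^2+\lambda\int_0^T e^{-2\lambda t}\phi(t)^2\,dt\ge0$. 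Weighting by $c_\alpha\lambda^{-\alpha}$ and integrating in $\lambda$ delivers the scalar positivity.

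The obstacles here are bookkeeping rather than conceptual. One must justify the interchanges of integration (Tonelli applied to the nonnegative exponential kernel, together with the a.e.-in-$x$ membership $w_i(x,\cdot)\in L^2(0,T)$) and verify that $\phi$ is absolutely continuous so that the integration by parts is licit for general $w\in L^2(0,T)$; this is cleanest by first establishing the estimate for smooth $w$ and passing to the limit via the $L^2(0,T)$-boundedness of ${_0I_t^\alpha}$. An alternative route to the scalar estimate is a Fourier/Plancherel argument: extending $w$ by zero to $\mathbb{R}$ turns the operator into convolution with the causal kernel whose symbol is $(i\omega)^{-\alpha}$, with real part $\cos(\tfrac{\alpha\pi}{2})|\omega|^{-\alpha}\ge0$ for $\alpha\in(0,1)$, whence $\int_0^T({_0I_t^\alpha}w)\,w\,dt=\tfrac{\cos(\alpha\pi/2)}{2\pi}\int_{\mathbb{R}}|\omega|^{-\alpha}\,|\widehat w(\omega)|^2\,d\omega\ge0$. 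I expect the exponential-kernel computation to be the more self-contained of the two.
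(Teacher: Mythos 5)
Your proposal is correct, and its main route differs genuinely from the paper's. Both arguments share the same opening reduction: commute $_0I_t^\alpha$ with $\nabla$ and reduce, via Fubini, to the scalar positivity $\int_0^T({_0I_t^\alpha}w)\,w\,dt\ge0$ for $w\in L^2(0,T)$. The paper then finishes by extending $w$ by zero to $\mathbb{R}$ and applying Parseval together with the symbol formula $\widehat{{_{-\infty}I_t^\alpha}f}(\xi)=(-\mathrm{i}\xi)^{-\alpha}\widehat f(\xi)$, so that the quadratic form equals a constant times $\int|\xi|^{-\alpha}|\widehat w(\xi)|^2\,d\xi$ --- essentially the Fourier alternative you sketch at the end (and indeed one should keep only the real part $\cos(\tfrac{\alpha\pi}{2})|\xi|^{-\alpha}$ of the symbol, a point the paper glosses over). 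Your primary argument instead subordinates the kernel to decaying exponentials via $\tfrac{t^{\alpha-1}}{\Gamma(\alpha)}=\tfrac{1}{\Gamma(\alpha)\Gamma(1-\alpha)}\int_0^\infty\lambda^{-\alpha}e^{-\lambda t}\,d\lambda$ and proves $J(\lambda)\ge0$ for each $\lambda$ by an elementary integration by parts; the Fubini interchange is licit because $\int_0^\infty\lambda^{-\alpha}e^{-\lambda(t-s)}d\lambda=\Gamma(1-\alpha)(t-s)^{\alpha-1}$ reproduces an absolutely convergent integral controlled by the $L^2(0,T)$-boundedness of $_0I_t^\alpha$. What each buys: the paper's Fourier proof is shorter and reuses a standard transform identity, but needs the zero extension and care with the complex symbol; your complete-monotonicity argument is self-contained, works directly on $(0,T)$, makes the sign manifest mode by mode, and extends verbatim to any completely monotone convolution kernel. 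Both are standard characterizations of kernels of positive type, so either is acceptable here.
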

\begin{proof}
Let $\widetilde v$ be the extension of $v$ to $\Omega \times \mathbb{R}$ by zero. Then clearly, we have
\begin{equation*}
  {_0 I_t^{\alpha}} v(t) = \frac{1}{\Gamma(\alpha)}\int_{-\infty}^t(t-s)^{\alpha-1}\widetilde v(s)ds := {_{-\infty} I_t^{\alpha}}\widetilde v (t).
\end{equation*}
With $~~\widehat{}~~$ being the Fourier transform in time, by Parseval's identity, we have
\begin{equation*}
\begin{split}
D( {_0 I_t^{\alpha}} v ,  v ) &= \int_0^T\int_\Omega(\nabla{_0 I_t^{\alpha}}v)\cdot(\nabla v )\,dx\,dt
= \int_0^T\int_\Omega ({_0 I_t^{\alpha}}\nabla v)\cdot(\nabla v )\,dx\,dt\\
&= \int_{-\infty}^\infty\int_\Omega ({_{-\infty} I_t^{\alpha}}\nabla \widetilde v)\cdot(\nabla \widetilde v )\,dx\,dt
=\int_\Omega\int_{-\infty}^\infty ({_{-\infty} I_t^{\alpha}}\nabla \widetilde v)\cdot(\nabla \widetilde v )\,dt\,dx\\
&=\int_\Omega\int_{-\infty}^\infty \widehat { {_{-\infty} I_t^{\alpha}}\nabla\widetilde v }(\xi)\cdot \widehat { \nabla\widetilde v }(\xi)\,dt\,dx= c_\alpha \int_\Omega\int_{0}^\infty |\xi|^{-\alpha} |\nabla \widetilde v|^2 \,d\xi\,dx  \ge 0,
\end{split}
\end{equation*}
where the last identity follows from $\widehat{_{-\infty}I_t^\alpha f}(\xi)=(-\mathrm{i}\xi)^{-\alpha}\widehat{f}(\xi)$ \cite[pp. 90]{KilbasSrivastavaTrujillo:2006}.
\end{proof}

\begin{lemma}\label{lem:pdf2}
For $v \in B^\alpha(Q_T)$, then $\langle{_0\partial_t^\alpha} v, v \rangle_{L^2(Q_T)}\ge 0$.
\end{lemma}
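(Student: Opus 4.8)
The plan is to reduce this $d$-dimensional space-time statement to a one-dimensional (in time) positivity estimate by diagonalizing the spatial operator, and then to prove the latter by a Fourier/Parseval argument completely parallel to the proof of Lemma \ref{lem:pdf}. Let $\{(\lambda_j,\phi_j)\}_{j\ge1}$ be the Dirichlet eigenpairs of $-\Delta$ on $\Omega$, with $\{\phi_j\}$ orthonormal in $L^2(\Omega)$. For $v\in B^\alpha(Q_T)$ I would expand $v(\cdot,t)=\sum_j v_j(t)\phi_j$ with $v_j(t)=(v(\cdot,t),\phi_j)_{L^2(\Omega)}$. Since $v\in\widetilde H_L^\alpha(0,T;H^{-1}(\Omega))$, each coefficient satisfies $v_j\in\widetilde H_L^\alpha(0,T)$, and ${_0\partial_t^\alpha}v(\cdot,t)=\sum_j ({_0\partial_t^\alpha}v_j)(t)\phi_j$ converges in $H^{-1}(\Omega)$. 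Because the $H^{-1}$--$H^1_0$ duality pairing extends the $L^2(\Omega)$ inner product and the $\phi_j$ are $L^2$-orthonormal, the pairing decouples across modes, giving $\langle {_0\partial_t^\alpha}v,v\rangle_{L^2(Q_T)}=\sum_j\int_0^T ({_0\partial_t^\alpha}v_j)(t)\,v_j(t)\,dt$. It therefore suffices to show $\int_0^T ({_0\partial_t^\alpha}w)(t)\,w(t)\,dt\ge0$ for every scalar $w\in\widetilde H_L^\alpha(0,T)$.

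For the scalar estimate I would argue exactly as in Lemma \ref{lem:pdf}: letting $\widetilde w$ denote the zero-extension of $w$ to $\mathbb{R}$, one has ${_0\partial_t^\alpha}w={_{-\infty}\partial_t^\alpha}\widetilde w$ on $(0,T)$, so that $\int_0^T ({_0\partial_t^\alpha}w)w\,dt=\int_{\mathbb{R}}({_{-\infty}\partial_t^\alpha}\widetilde w)\widetilde w\,dt$. Applying Parseval's identity together with the symbol $\widehat{{_{-\infty}\partial_t^\alpha}\widetilde w}(\xi)=(-\mathrm i\xi)^\alpha\widehat{\widetilde w}(\xi)$, and using that $\widetilde w$ is real-valued so that the left-hand side is real, I obtain (up to a fixed positive normalization constant)
\[
\int_0^T ({_0\partial_t^\alpha}w)\,w\,dt=\mathrm{Re}\int_{\mathbb{R}}(-\mathrm i\xi)^\alpha|\widehat{\widetilde w}(\xi)|^2\,d\xi=\cos\Big(\tfrac{\alpha\pi}{2}\Big)\int_{\mathbb{R}}|\xi|^\alpha|\widehat{\widetilde w}(\xi)|^2\,d\xi ,
\]
where the last equality uses $\mathrm{Re}\,(-\mathrm i\xi)^\alpha=|\xi|^\alpha\cos(\alpha\pi/2)$. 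Since $\alpha\in(0,1)$ gives $\cos(\alpha\pi/2)>0$, the right-hand side is non-negative; summing the modal contributions then yields the claim.

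The main obstacle is not a deep difficulty but a matter of rigor: justifying the modal decoupling of the pairing, since ${_0\partial_t^\alpha}v$ lies only in $L^2(0,T;H^{-1}(\Omega))$ and the left-hand side is a genuine duality pairing rather than an $L^2(Q_T)$ inner product. Making this precise requires controlling the tails of the eigenexpansion in the $V$- and $V^*$-norms and interchanging the summation with the time integration, which is legitimate once each modal integral has been shown to be non-negative. An alternative that avoids this bookkeeping is to first prove the inequality for smooth $v$ vanishing at $t=0$, where Fubini reduces the integral to $\int_\Omega\big(\int_0^T ({_0\partial_t^\alpha}v)\,v\,dt\big)\,dx$ and the scalar estimate applies pointwise in $x$, and then to pass to a general $v\in B^\alpha(Q_T)$ by density, using the boundedness of the bilinear form $(v,w)\mapsto\langle {_0\partial_t^\alpha}v,w\rangle_{L^2(Q_T)}$ on $B^\alpha(Q_T)\times V$.
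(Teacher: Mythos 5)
Your proposal is correct and uses the same core mechanism as the paper's own proof: Parseval's identity together with the positivity of $\mathrm{Re}\,(-\mathrm{i}\xi)^{\pm\alpha}=|\xi|^{\pm\alpha}\cos(\alpha\pi/2)$ for $\alpha\in(0,1)$. The only differences are cosmetic: the paper first substitutes $v={_0I_t^{\alpha}}v_\alpha$ with $v_\alpha={_0\partial_t^{\alpha}}v\in L^2$ and applies the Fourier argument to the integral operator (symbol $(-\mathrm{i}\xi)^{-\alpha}$) rather than directly to the derivative, and your explicit eigenfunction decoupling of the $H^{-1}$--$H^1_0$ pairing (or the density alternative you mention) merely makes rigorous a reduction that the paper's two-line proof leaves implicit.
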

\begin{proof}
For $v(\cdot,t)\in {{\widetilde H}^\alpha _L}( {0,T})$, let $v_\alpha = {_0\partial_t^\alpha}v $.
Since $_0I_t^\alpha $ is the left inverse of the operator $_0\partial_t^\alpha $ on the space $\widetilde{H}_L^\alpha(0,T)$
\cite[pp. 75, Lemma 2.6]{KilbasSrivastavaTrujillo:2006}, we have $v={}_0I_t^\alpha v_\alpha $, and
\begin{equation*}
\langle {_0\partial _t^\alpha v,v} \rangle_{L^2(Q_T)}  =\langle
{{v_\alpha },{}_0I_t^\alpha {v_\alpha }}\rangle_{L^2(Q_T)} ,
\end{equation*}
which together with Parseval's identity concludes the proof.
\end{proof}
	
%%%%%%%%%%%%%%%%%%%%%%%%%%%%%%
\subsection{Weak time-space formulation}
%%%%%%%%%%%%%%%%%%%%%%%%%%%%%%
	
Inspired by the recent works \cite{mollet2014stabilityNo,Steinbach_CMAM_2015} on space-time
formulations for standard parabolic problems, we develop such
formulation for problem \eqref{orig} as well.
First we introduce the bilinear form $a(\cdot,\cdot): ~~\Bal(Q_T) \times V(Q_T) \to \mathbb{R}$:
\begin{equation}\label{eq:forma}
a(v, \phi) := \langle {_0\partial_t^\alpha} v, \phi \rangle_{L^2(Q_T)} + D(v, \phi).
\end{equation}
Then the  weak (Petrov-Galerkin) form of problem \eqref{orig}  is:
find $ u \in \Bal(Q_T)$  such that
\begin{equation}\label{eqn:BV-weak}
 	a(u, \phi) = \langle f, \phi \rangle_{L^2(Q_T)} \quad \forall \phi \in V.
\end{equation}

By Lemma \ref{lem:Bs-norm}, the bilinear form $a(\cdot,\cdot)$ is continuous on
the product space $B^\alpha(Q_T) \times V(Q_T)$:
\begin{equation*}%\label{eqn:conti}
	| a(v, \phi) | \le |\langle {_0\partial_t^\alpha} v, \phi \rangle_{L^2(Q_T)} | + |D(v, \phi)| \le  \| v \|_{\Bal(Q_T)} \|\phi \|_V.
\end{equation*}
	
Next we show the inf-sup condition of the bilinear form $a(\cdot,\cdot)$.
\begin{lemma}[inf-sup condition]\label{lem:inf-sup}
For all $ v \in \Bal(Q_T)$, there holds
\begin{equation}\label{eq:inf-sup}
	\sup_{\phi \in V} \frac{a(v,\phi) }{\|\phi \|_V} \ge \|v \|_{\Bal(Q_T)}.
\end{equation}
Moreover, for any $ \phi \in V$, with $\phi\neq0$,  the following compatibility condition holds:
\begin{equation*}
   \sup_{v \in \Bal(Q_T)} a(v,\phi) >0.
\end{equation*}
\end{lemma}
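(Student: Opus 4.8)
The plan is to prove the inf-sup bound \eqref{eq:inf-sup} by exhibiting, for each fixed $v\in\Bal(Q_T)$, a single test function that realizes the norm $\|v\|_{\Bal(Q_T)}$, and to establish the compatibility condition by a direct construction using the fractional integral. Since $v\in\Bal(Q_T)$ we have $v\in V$ and, by Lemma \ref{lem:Bs-norm}, ${_0\partial_t^\alpha}v\in V^*$ with finite norm. The space $V$ equipped with the Dirichlet form $D(\cdot,\cdot)$ is a Hilbert space, so I would invoke the Riesz representation theorem: let $\psi\in V$ be the representative of the functional $\phi\mapsto\langle{_0\partial_t^\alpha}v,\phi\rangle_{L^2(Q_T)}$, characterized by $D(\psi,\phi)=\langle{_0\partial_t^\alpha}v,\phi\rangle_{L^2(Q_T)}$ for all $\phi\in V$ together with $\|\psi\|_V=\|{_0\partial_t^\alpha}v\|_{V^*}$. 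With this identification the bilinear form collapses to $a(v,\phi)=D(v+\psi,\phi)$ for every $\phi\in V$, whence $\sup_{\phi\in V}a(v,\phi)/\|\phi\|_V=\|v+\psi\|_V$, the supremum being attained at $\phi=v+\psi$.

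It then remains to bound $\|v+\psi\|_V$ from below by $\|v\|_{\Bal(Q_T)}$. Expanding the square gives $\|v+\psi\|_V^2=D(v,v)+2D(v,\psi)+\|\psi\|_V^2$. The first and last terms are exactly $D(v,v)$ and $\|{_0\partial_t^\alpha}v\|_{V^*}^2$, which together make up $\|v\|_{\Bal(Q_T)}^2$ by \eqref{BL-norm}. The crux is the cross term: choosing $\phi=v$ in the defining relation for $\psi$ and using symmetry of the Dirichlet form yields $D(v,\psi)=\langle{_0\partial_t^\alpha}v,v\rangle_{L^2(Q_T)}$, which is non-negative by Lemma \ref{lem:pdf2}. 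Consequently $\|v+\psi\|_V^2\ge\|v\|_{\Bal(Q_T)}^2$, which is precisely \eqref{eq:inf-sup}. I expect this non-negativity of the cross term to be the single genuinely load-bearing point; everything else is Hilbert-space bookkeeping, and it is the identity linking $D(v,\psi)$ to the coercivity-type quantity $\langle{_0\partial_t^\alpha}v,v\rangle$ that makes the inf-sup constant exactly $1$.

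For the compatibility condition, fix $\phi\in V$ with $\phi\neq0$ and take the trial function $v={_0I_t^\alpha}\phi$. Since ${_0\partial_t^\alpha}$ is a left inverse of ${_0I_t^\alpha}$ on $L^2(0,T)$ (so that ${_0\partial_t^\alpha}v=\phi$), the bilinear form evaluates to $a(v,\phi)=\langle\phi,\phi\rangle_{L^2(Q_T)}+D({_0I_t^\alpha}\phi,\phi)=\|\phi\|_{L^2(Q_T)}^2+D({_0I_t^\alpha}\phi,\phi)$. By Lemma \ref{lem:pdf} the second term is non-negative, so $a(v,\phi)\ge\|\phi\|_{L^2(Q_T)}^2>0$, and the supremum over $v$ is strictly positive.

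The remaining technical obligation, and the only place in the second part where some care is needed, is to check that the trial function is admissible, i.e. $v={_0I_t^\alpha}\phi\in\Bal(Q_T)$. Spatially this is immediate because ${_0I_t^\alpha}$ acts only in $t$ and is bounded on $L^2(0,T)$, so $v\in L^2(0,T;H^1_0(\Omega))$; temporally one uses the mapping property ${_0I_t^\alpha}\colon L^2(0,T)\to\widetilde H_L^\alpha(0,T)$ together with ${_0\partial_t^\alpha}v=\phi\in V^*$ and Lemma \ref{lem:Bs-norm} to conclude $v\in\widetilde H_L^\alpha(0,T;H^{-1}(\Omega))$. I would present these verifications briefly, as they are routine consequences of the standard boundedness and inversion properties of the Riemann--Liouville operators already quoted in the preceding lemmas.
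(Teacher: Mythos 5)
Your proof is correct and is essentially the paper's argument: your Riesz representative $\psi$ is exactly the paper's Newton potential $N\,{_0\partial_t^\alpha}v$, your test function $v+\psi$ is its $\phi_v$, the load-bearing step is the same non-negativity $\langle{_0\partial_t^\alpha}v,v\rangle_{L^2(Q_T)}\ge 0$ from Lemma \ref{lem:pdf2}, and the compatibility argument via $v={_0I_t^\alpha}\phi$ and Lemma \ref{lem:pdf} is identical. Your bookkeeping is in fact marginally tighter, since identifying the supremum exactly as $\|v+\psi\|_V$ delivers the inf-sup constant $1$ directly, whereas the paper bounds numerator and denominator separately.
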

\begin{proof}
First, following \cite{Steinbach_CMAM_2015} we introduce
a {Newton potential} operator $N : V^* \to V$ as $ N \, \psi = w$,
where $w$ is a solution of the following problem: find $w \in V$ such that
\begin{equation*}
	D(w, \phi) =\langle  \psi, \phi\rangle_{L^2(Q_T)} \quad \forall \phi \in V.
\end{equation*}
By Lax-Milgram theorem, this problem has a unique solution $w = N \psi \in V$ that satisfies
\begin{equation}\label{eqn:pot}
   \| w \|_V = \|N \psi \|_V = \| \psi\|_{V^*}.
\end{equation}
For any given $ v \in \Bal(Q_T)$, let $ \phi_v  = v + N  _0\partial_t^\alpha v$. Obviously,
$ \phi_v \in V$ and by \eqref{eqn:pot}
\begin{equation*}
	\| \phi_v \|_V = \| v + N  _0\partial_t^\alpha v \|_V \le \|v \|_V + \|N _0\partial_t^\alpha v \|_V
	=  \|v \|_V + \|_0\partial_t^\alpha v \|_{V^*} = \| v \|_{\Bal(Q_T)}.
\end{equation*}
Using the function $\phi_v$, we have
\begin{align*}
a(v,\phi_v) & = \langle {_0\partial_t^\alpha } v, \phi_v \rangle_{L^2(Q_T)} + D(v, \phi_v) \\
& = \langle {_0\partial_t^\alpha} v, v \rangle_{L^2(Q_T)} + D(v, v) + \langle {_0\partial_t^\alpha} v, N {_0\partial_t^\alpha}v \rangle_{L^2(Q_T)} + D(v, N {_0\partial_t^\alpha} v).
\end{align*}
By the definition of the operator $N$, we have
\begin{equation*}
  \langle {_0\partial_t^\alpha} v, N {_0\partial_t^\alpha} v \rangle_{L^2(Q_T)} = D( N{_0\partial_t^\alpha} v ,N{_0\partial_t^\alpha}v)\quad\mbox{and}\quad
D( v, N {_0\partial_t^\alpha} v ) = \langle {_0\partial_t^\alpha} v, v \rangle_{L^2(Q_T)},
\end{equation*}
and consequently,
\begin{equation*}
a(v,\phi_v) = 2 \langle {_0\partial_t^\alpha}  v, v \rangle_{L^2(Q_T)} + D(v, v) + D( N{_0\partial_t^\alpha}  v, N {_0\partial_t^\alpha} v).
\end{equation*}
Then Lemmas \ref{lem:Bs-norm} and \ref{lem:pdf2} and \eqref{eqn:pot} yield
\begin{equation*}
a(v,\phi_v) \ge \| v \|_{V}^2 + \| _0\partial_t^\alpha v  \|_{V^*}^2 = \| v \|_{\Bal(Q_T)}^2.
\end{equation*}
This completes the proof of the {inf-sup} condition.
	
Next, we prove the compatibility condition.  For a given
$0\neq \phi \in V$, let $v_\phi= {_0I_t^{\alpha}} \phi $. Then
\begin{equation*}
  _0\partial_t^\alpha  v_\phi  = {_0\partial_t^\alpha} ( {_0I_t^{\alpha}} \phi)= \phi.
\end{equation*}
Thus, $\| {_0\partial_t^\alpha} v_\phi \|_{L^2(Q_T)}  =  \| \phi\|_{L^2(Q_T)} \le
\| \phi\|_{V}$ and as a result, $ v_\phi \in \Bal(Q_T)$
and
$$
\langle {_0\partial_t^\alpha} v_\phi, \phi \rangle_{L^2(Q_T)} = \langle \phi, \phi \rangle_{L^2(Q_T)} =\|\phi\|_{L^2(Q_T)}^2>0.
$$
The required bound
$ \sup_{v \in \Bal} a(v,\phi) >0 $  follows easily from the inequality
$D( {_0I_t^{\alpha}} \phi ,  \phi)  \ge 0$ (cf. Lemma \ref{lem:pdf}), which completes the proof of the lemma.
\end{proof}

Now we show  the existence and uniqueness of the weak solution.
\begin{theorem}\label{thm:existence}
For any $f\in V^*(Q_T)$, problem \eqref{eqn:BV-weak} has a unique solution
$ u \in \Bal(Q_T)$, and it satisfies the following a priori estimate
$$
	\| u \|_{B^\alpha(Q_T)} \le c \|f \|_{V^*(Q_T)}.
$$
\end{theorem}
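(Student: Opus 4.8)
The plan is to recognize that \eqref{eqn:BV-weak} is a Petrov--Galerkin problem that fits exactly the abstract Banach--Ne\v{c}as--Babu\v{s}ka (generalized Lax--Milgram) framework, and to extract the conclusion directly from Lemma \ref{lem:inf-sup}. First I would verify that the trial and test spaces are of the required functional-analytic type. The test space $V(Q_T)=L^2(0,T;H^1_0(\Omega))$ is a Hilbert space, hence reflexive. For the trial space, by Lemma \ref{lem:Bs-norm} the norm \eqref{BL-norm} is equivalent to the graph (intersection) norm that renders $\Bal(Q_T)=\widetilde H_L^\alpha(0,T;H^{-1}(\Omega))\cap L^2(0,T;H^1_0(\Omega))$ the intersection of two Hilbert spaces, so $\Bal(Q_T)$ is itself a (complete) Hilbert space. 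Thus both spaces are reflexive Banach spaces and the abstract theory applies.

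Next I would line up the three hypotheses of the abstract theorem. Continuity of $a(\cdot,\cdot)$ on $\Bal(Q_T)\times V(Q_T)$ with constant $1$ was already established immediately before Lemma \ref{lem:inf-sup}. The inf-sup bound \eqref{eq:inf-sup} of Lemma \ref{lem:inf-sup}, after dividing by $\|v\|_{\Bal(Q_T)}$, gives $\inf_{v}\sup_{\phi} a(v,\phi)/(\|v\|_{\Bal(Q_T)}\|\phi\|_V)\ge 1$, i.e.\ the inf-sup condition with constant $\beta=1$. The second (compatibility) assertion of Lemma \ref{lem:inf-sup} states that for every $\phi\neq 0$ there is a $v$ with $a(v,\phi)>0$, which is precisely the non-degeneracy of the test space required by the theorem. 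Finally, since $\langle\cdot,\cdot\rangle_{L^2(Q_T)}$ is used as the $V$--$V^*$ duality pairing, the load functional $\phi\mapsto\langle f,\phi\rangle_{L^2(Q_T)}$ is bounded on $V$ with dual norm at most $\|f\|_{V^*(Q_T)}$.

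With these in hand I would invoke the Banach--Ne\v{c}as--Babu\v{s}ka theorem to conclude existence and uniqueness. Concretely, the mechanism is that the inf-sup condition makes the operator $A:\Bal(Q_T)\to V^*$ defined by
\begin{equation*}
\langle A v,\phi\rangle_{L^2(Q_T)} = a(v,\phi)\qquad\forall\,\phi\in V
\end{equation*}
injective with closed range and $\|Av\|_{V^*}\ge\|v\|_{\Bal(Q_T)}$, while the compatibility condition forces the range of $A$ to be dense in $V^*$; a closed and dense range is all of $V^*$, so $A$ is an isomorphism with $\|A^{-1}\|\le 1$. Solving $Au=f$ then yields the unique $u\in\Bal(Q_T)$ together with the a priori bound $\|u\|_{\Bal(Q_T)}\le\|f\|_{V^*(Q_T)}$, i.e.\ $c=1$. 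I do not expect any genuine obstacle here: all of the analytic content resides in Lemma \ref{lem:inf-sup}, and the only points to check are the functional-analytic prerequisites (completeness and reflexivity of $\Bal(Q_T)$ and $V(Q_T)$, and boundedness of the load functional), which are routine.
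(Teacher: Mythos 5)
Your proposal is correct and follows essentially the same route as the paper, which simply invokes Lemma \ref{lem:inf-sup} together with the continuity of $a(\cdot,\cdot)$ and appeals to the standard Banach--Ne\v{c}as--Babu\v{s}ka theory. You have merely filled in the routine functional-analytic details (reflexivity of the spaces, boundedness of the load functional, the closed-plus-dense-range argument) that the paper leaves implicit.
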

\begin{proof}
The existence, uniqueness and stability follow immediately from Lemma \ref{lem:inf-sup}, and the continuity
of the bilinear form $a(\cdot,\cdot)$.
\end{proof}
	
\begin{remark}
Li and Xu \cite{Li_Xu_SINUM_2009} proposed the following Galerkin weak formulation:
find  $ u \in \tilde B^\frac{\alpha}{2}(Q_T): = H^\frac{\alpha}{2}(0,T;L^2(\Omega))\cap L^2(0,T;H_0^1(\Omega))$ such that
$a(u,v) = \langle f, v \rangle_{L^2(Q_T)}, \  \forall v \in \tilde B^\frac{\alpha}{2}(Q_T),$
with the bilinear form $a(\cdot,\cdot)$ defined by
\begin{equation*}
	a(u,v) = \langle _0\partial_t^\frac{\alpha}{2} u,\ {_t\partial_T^\frac{\alpha}{2}v} \rangle_{L^2(Q_T)} + D(u,v).
\end{equation*}
The bilinear form $a(\cdot, \cdot)$ is continuous and coercive on the space $ \tilde B^\frac{\alpha}{2}(Q_T)$, cf.
\cite{Li_Xu_SINUM_2009}, and thus the variational problem is well posed.
Further, they studied a  spectral approximation. For other interesting
extensions of space-time fractional models, one can find in \cite{LiXu:2010,ZayernouriAinsworth:2015}.
\end{remark}

\begin{remark}
Note that for $f\in L^2(0,T;L^2(\Omega))$, the initial condition $u(0)=0$ in \eqref{orig}
makes sense only if $\alpha>\frac12$. For $\alpha\leq\frac12$, one should
not impose any initial condition, unless $f$ has extra temporal regularity, which however may be
interpreted in a weak sense  \cite{GorenfloLuchkoYamamoto:2015}.
\end{remark}

\subsection{Regularity of the solution}
If the source term $f$ has higher  spatial and/or temporal  regularity, then accordingly,
the solution $u$ is more regular than that in Theorem \ref{thm:existence}.
Now we establish such regularity pickup, which is useful for
the error analysis in Section \ref{sec:conv-PDE}.

Let $\{\varphi_n\}_{n=1}^{\infty}\in H^2(\Omega) \cap H^1_0(\Omega)$ and $\{\lambda_n\}_{n=1}^\infty$
denote respectively the $L^2(\Omega)$-orthonormal eigenfunctions of the operator $-\Delta$ (with
a homogeneous Dirichlet boundary condition) and corresponding eigenvalues
(ordered non-decreasingly with multiplicity counted).
Then the solution $u$ of problem \eqref{orig} can be expressed by
\begin{equation}\label{eqn:sol-rep}
  u(t) = \int_0^t E(t-s)f(s) ds = \int_0^t E(s)f(t-s)ds.
\end{equation}
Here the solution operator $E(t)$ is defined by
$ E(t)v = t^{\alpha-1}E_{\alpha,\alpha}( t^\alpha\Delta)v,$
where for any $\alpha>0$ and $\beta\in\mathbb{R}$,
$E_{\alpha,\beta}(z)=\sum_{k=0}^\infty z^k/\Gamma(k\alpha+\beta)$ \cite[pp. 42]{KilbasSrivastavaTrujillo:2006}.

The next result gives the solution stability and regularity pick up.
\begin{theorem}\label{thm:regularity}
For $f\in {{\widetilde H}_L^s}( {0,T;{L^2}(\Omega )})$,  $s\in [0,1]$, the solution $u$
to problem \eqref{orig} belongs to $\widetilde H_L^{\alpha  + s}( {0,T;{L^2}(\Omega )}) \cap
\widetilde H_L^{ s}(0,T;H_0^1(\Omega) \cap H^2(\Omega))$, and
\begin{equation}\label{Reg-est-u}
  \|u\|_{\widetilde{H}^{\alpha+s}_L (0,T;L^2(\Omega))} + \|u\|_{\widetilde{H}^s_L (0,T;H^2(\Omega))}
     \leq c\|f\|_{\widetilde{H}^s_L (0,T;L^2(\Omega))}.
\end{equation}
Furthermore, if $f\in {{\widetilde H}_L^s}( {0,T;{H_0^1}(\Omega )})$ then
$u\in \widetilde H_L^{\alpha + s}( {0,T;{H_0^1(\Omega)}})$.
\end{theorem}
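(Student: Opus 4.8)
The plan is to diagonalize in space and reduce everything to a single, uniform-in-$\lambda$ bound for the scalar fractional ODE. Writing $u(t)=\sum_n u_n(t)\varphi_n$ and $f(t)=\sum_n f_n(t)\varphi_n$ with $u_n=(u,\varphi_n)$ and $f_n=(f,\varphi_n)$, the representation \eqref{eqn:sol-rep} shows that each mode solves ${_0\partial_t^\alpha}u_n+\lambda_n u_n=f_n$ on $(0,T)$ with $u_n\in\widetilde H_L^\alpha(0,T)$, and is given by the causal convolution $u_n=e_n*f_n$, where $e_n(t)=t^{\alpha-1}E_{\alpha,\alpha}(-\lambda_n t^\alpha)$. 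Since $f\in\widetilde H_L^s(0,T;L^2(\Omega))$ yields $f_n\in\widetilde H_L^s(0,T)$ with $\sum_n\|f_n\|_{\widetilde H_L^s(0,T)}^2\sim\|f\|_{\widetilde H_L^s(0,T;L^2(\Omega))}^2$ by Parseval, and likewise $\sum_n\lambda_n\|f_n\|_{\widetilde H_L^s(0,T)}^2\sim\|f\|_{\widetilde H_L^s(0,T;H_0^1)}^2$ while $\|u\|_{\widetilde H_L^s(0,T;H^2)}^2\sim\sum_n\lambda_n^2\|{_0\partial_t^s}u_n\|_{L^2(0,T)}^2$ by elliptic regularity on the convex domain $\Omega$, all three assertions will follow from one mode-wise estimate.

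\emph{Key estimate.} The crux is the bound, uniform in $\lambda:=\lambda_n>0$,
\[
\lambda\,\|{_0\partial_t^s}u_n\|_{L^2(0,T)}\le \|{_0\partial_t^s}f_n\|_{L^2(0,T)},
\]
which I would prove as follows. Because $e_n\ge0$ (complete monotonicity of $x\mapsto E_{\alpha,\alpha}(-x)$ for $\alpha\in(0,1)$) and its Laplace transform equals $(z^\alpha+\lambda)^{-1}$, evaluation at $z=0$ gives $\|e_n\|_{L^1(0,\infty)}=\lambda^{-1}$, hence $\|e_n\|_{L^1(0,T)}\le\lambda^{-1}$. The Riemann--Liouville derivative commutes with the causal convolution in the form ${_0\partial_t^s}(e_n*f_n)=e_n*({_0\partial_t^s}f_n)$, where the singular factor is kept inside $e_n$ and the derivative is moved onto $f_n$; this is legitimate since $f_n\in\widetilde H_L^s(0,T)$. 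Young's convolution inequality on $(0,T)$ then yields $\|{_0\partial_t^s}u_n\|_{L^2(0,T)}\le\|e_n\|_{L^1(0,T)}\|{_0\partial_t^s}f_n\|_{L^2(0,T)}\le\lambda^{-1}\|{_0\partial_t^s}f_n\|_{L^2(0,T)}$, which is the claim; the case $s=0$ gives in particular $\lambda\|u_n\|_{L^2(0,T)}\le\|f_n\|_{L^2(0,T)}$.

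\emph{Assembling the estimates.} For the $H^2$-bound, elliptic regularity and Parseval give $\|u\|_{\widetilde H_L^s(0,T;H^2)}^2\sim\sum_n\lambda_n^2\|{_0\partial_t^s}u_n\|_{L^2(0,T)}^2\le\sum_n\|{_0\partial_t^s}f_n\|_{L^2(0,T)}^2\sim\|f\|_{\widetilde H_L^s(0,T;L^2)}^2$, where the temporal equivalence $\|{_0\partial_t^s}\cdot\|_{L^2(0,T)}\sim\|\cdot\|_{\widetilde H_L^s(0,T)}$ is exactly the one used in Lemma \ref{lem:Bs-norm}. For the $\widetilde H_L^{\alpha+s}(0,T;L^2)$-bound I would use the equation in the form ${_0\partial_t^\alpha}u=f+\Delta u$ together with the composition ${_0\partial_t^{\alpha+s}}u={_0\partial_t^s}({_0\partial_t^\alpha}u)$ to obtain $\|u\|_{\widetilde H_L^{\alpha+s}(0,T;L^2)}\sim\|{_0\partial_t^{\alpha+s}}u\|_{L^2(0,T;L^2)}\le\|{_0\partial_t^s}f\|_{L^2(0,T;L^2)}+\|{_0\partial_t^s}\Delta u\|_{L^2(0,T;L^2)}\le c\|f\|_{\widetilde H_L^s(0,T;L^2)}$, the last step invoking the $H^2$-bound just established. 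The final $H_0^1$-claim follows verbatim by inserting the weight $\lambda_n$ into the summation, $\sum_n\lambda_n\|{_0\partial_t^{\alpha+s}}u_n\|_{L^2(0,T)}^2\le c\sum_n\lambda_n\|{_0\partial_t^s}f_n\|_{L^2(0,T)}^2\sim\|f\|_{\widetilde H_L^s(0,T;H_0^1)}^2$.

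\emph{Main obstacle.} I expect the delicate points to be twofold: (i) the uniformity in $\lambda$, which rests entirely on the sharp identity $\|e_n\|_{L^1}=\lambda^{-1}$ and on keeping the $t^{\alpha-1}$ singularity inside $e_n$ so that the $s$-derivative lands on $f_n$ rather than on the kernel; and (ii) justifying the commutation ${_0\partial_t^s}(e_n*f_n)=e_n*{_0\partial_t^s}f_n$ and the composition rule ${_0\partial_t^{\alpha+s}}={_0\partial_t^s}\,{_0\partial_t^\alpha}$ within the $\widetilde H_L$-scale, in particular the range $\alpha+s>1$ where the temporal norm equivalence must be applied with extra care. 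These are most cleanly handled by first verifying the identities for smooth functions compactly supported in $(0,T]$ and then passing to the limit by density, using the equivalence of Lemma \ref{lem:Bs-norm}.
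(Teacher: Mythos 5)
Your proposal is correct in substance, but it follows a genuinely different route from the paper. The paper proves the endpoint cases and interpolates: for $s=0$ it simply cites the maximal-regularity estimate $\|{_0\partial_t^\alpha}u\|_{L^2(Q_T)}+\|\Delta u\|_{L^2(Q_T)}\le c\|f\|_{L^2(Q_T)}$ from Gorenflo--Luchko--Yamamoto; for $s=1$ it uses $f(0)=0$ to differentiate the Duhamel representation \eqref{eqn:sol-rep} without a boundary term, applies the $s=0$ bound to $u'$, verifies $u(0)=0$ so that ${_0\partial_t^\alpha}u'={_0\partial_t^{\alpha+1}}u$, and then invokes interpolation (in the temporal index only) for $0<s<1$. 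You instead diagonalize in space and prove a single uniform-in-$\lambda$ scalar estimate $\lambda\|{_0\partial_t^s}u_n\|_{L^2(0,T)}\le\|{_0\partial_t^s}f_n\|_{L^2(0,T)}$ via complete monotonicity of $E_{\alpha,\alpha}(-x)$, the identity $\|e_\lambda\|_{L^1(0,\infty)}=\lambda^{-1}$, commutation of ${_0\partial_t^s}$ with the causal convolution, and Young's inequality; then you reassemble with Parseval and $H^2$ elliptic regularity on the convex domain. Your route is more self-contained (it re-derives the cited $s=0$ result), treats all $s\in[0,1]$ at once without interpolation, and is structurally the same machinery the paper deploys later for the error analysis --- amusingly, the paper obtains the ODE bound \eqref{eqn:regularity-ode} as a corollary of this theorem, whereas you derive the theorem from the ODE bound. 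The price is that you must justify directly the kernel positivity and $L^1$ identity, the commutation ${_0\partial_t^s}(e_\lambda*f_n)=e_\lambda*{_0\partial_t^s}f_n$, and the norm equivalence $\|v\|_{\widetilde H_L^{\gamma}}\sim\|{_0\partial_t^{\gamma}}v\|_{L^2}$ for orders $\gamma=\alpha+s$ possibly exceeding $1$ (the paper's Lemma \ref{lem:Bs-norm} only covers $\gamma\in(0,1)$, and the paper avoids the issue by working with $u'$); you correctly flag these as the delicate points, and all of them do go through on the zero-extension scale, so there is no fatal gap.
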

\begin{proof}
For $f\in L^2(Q_T)$, by \cite[Theorem 4.1]{GorenfloLuchkoYamamoto:2015}, there holds
\begin{equation*}
 \|{}_0\partial_t^\alpha u\|_{L^2(Q_T)} + \|\Delta u\|_{L^2(Q_T)}\leq c\|f\|_{L^2(Q_T)},
\end{equation*}
which shows the assertion for $s=0$. Next we turn to the case $s=1$.
Since $f\in \widetilde {H}^1_L(0,T;L^2(\Omega))$,
we have $f(0)=0$. Now by differentiating the
representation \eqref{eqn:sol-rep} with respect to $t$, we deduce
\begin{equation*}
  u^\prime(t) = E(t)f(0) + \int_0^tE(t-s)f^\prime(s)ds = \int_0^tE(t-s)f^\prime(s)ds.
\end{equation*}
By the preceding estimate, the term $v=:\int_0^tE(t-s)f^\prime(s)ds$ satisfies
\begin{equation*}
  \| {_0\partial_t^\alpha } v\|_{L^2(Q_T)} + \|\Delta v\|_{L^2(Q_T)} \leq c\|f^\prime\|_{L^2(Q_T)}.
\end{equation*}
By \cite[Lemma 2.2]{JinLazarovPasciakZhou:2015} and the fact that
$L^\infty( 0,T ; L^2(\Omega))\subset{{\widetilde H}_L^1}( {0,T;L^2(\Omega )})$,  for $s=1$ we get
\begin{equation*}
     \| u(t) \|_{L^2(\Omega)} \le c \int_0^t (t-s)^{\alpha-1} \| f (s)\|_{L^2(\Omega)}\,ds
     \le ct^{\alpha} \| f \|_{L^\infty( 0,T ; L^2(\Omega))}  \rightarrow 0 \quad \text{as}~~t\rightarrow0,
\end{equation*}
and hence $u(0)=0$. Consequently, we have
$$ {}_0\partial_t^\alpha v(t) = \partial_t ({_0\hspace{-0.3mm}I^{1-\alpha}_t} (\partial_t u))(t)
= \partial_t^2({_0\hspace{-0.3mm}I^{1-\alpha}_t}  u )(t)
= {}_0\partial_t^{\alpha+1}u(t).$$
Therefore, the desired assertion holds also for $s=1$. Since only temporal regularity index is concerned,
one may apply an interpolation argument to deduce the intermediate case (see \cite[Lemma 2.8]{MeyriesSchnaubelt:2012}
or \cite[Theorem 2.35]{HytonenWeis:2016}). The case $f\in \widetilde H^s_L(0,T;H_0^1(\Omega))$ follows similarly.
\end{proof}

\section{Petrov-Galerkin FEM  for tensor-product meshes}\label{sec:FEM}
%%%%%%%%%%%%%%%%%%%%%
Based on the space-time variational formulation in Section \ref{sec:space-time},  in this part,
we develop a novel Petrov-Galerkin finite element method (FEM), establish the discrete inf-sup condition
and describe its linear algebraic formulation.

\subsection{Finite element method}
First, we introduce a quasi-uniform shape regular partition of the domain $\Omega$ into simplicial elements
of maximal diameter $h$, which is denoted by ${\mathcal T}_h$. We consider the space of continuous
piecewise linear functions on ${\mathcal T}_h$ with $N\in\mathbb{N}$
being the number of degrees of freedom. Let $\{\varphi_i\}_{i=1}^N \subset H^1_0(\Omega)$
be the nodal basis functions and denote
\begin{equation*}
\mathbb{X}_h := \text{span} ( \{\varphi_i\}_{i=1}^N).
\end{equation*}
On the space $\mathbb{X}_h$, we recall the $L^2$-projection $P_h:L^2(\Omega)\to \mathbb{X}_h$ defined by:
\begin{equation}\label{L2-P}
   (\phi-P_h\phi, \chi)_{L^2(\Omega)} = 0 \quad \forall \chi\in \mathbb{X}_h.
\end{equation}
It is well known that it satisfies the following error estimate \cite{Thomee:2006}:
\begin{equation}\label{eqn:err-Ph}
  \|P_h\phi-\phi\|_{L^2(\Omega)} + h\|P_h\phi-\phi\|_{H^1(\Omega)} \leq ch^q\|\phi\|_{H^q(\Omega)},
             \quad \forall \phi\in H_0^1(\Omega)\cap H^q(\Omega),\;  q=1,2,
\end{equation}
and the following negative norm estimate \cite[pp. 69]{Thomee:2006}:
\begin{equation}\label{neg-norm-Ph}
\|P_h\phi-\phi\|_{H^{-1}(\Omega)}\le c h\|\phi\|_{L^2(\Omega)}.
\end{equation}

Next we uniformly partition the time interval $(0,T)$ with grid points $t_k=k \tau$, $ k=0,\ldots,K$,
$K\in\mathbb{N}$, and a time step size $\tau=T/K $. On this partition, following \cite{Jin2015petrovSINUM},
we define a set of ``fractionalized'' piecewise constant basis functions $ \phi_k(t)$,   $k =1,\ldots,K$, by
\begin{equation}\label{eqn:fy}
\phi_k(t) = \left\{
\begin{array}{ll}
 0,           &     0 \le t  \le t_{k-1}, \\
(t - t_{k-1})^{\alpha}, &  t_{k-1}  \le  t  \le T,
\end{array} \right \}
:=(t - t_{k-1})^{\alpha}\chi_{[t_{k-1}, T]}(t),
\end{equation}
where $\chi_S$ denotes the characteristic function of the set $S$. It is
easy to verify that for $k=1, \dots, K$
$$
\phi_k(t)=\Gamma(\alpha +1){_0I_t^{\alpha}}\chi_{[t_{k-1}, T]}(t)
\quad \mbox{and} \quad
_0\partial_t^\alpha \phi_k (t) =\Gamma(\alpha +1) \chi_{[t_{k-1}, T]}(t).
$$
Clearly, $\phi_k \in \widetilde H_L^{\alpha+s}(0,T)$ for any $s\in[0,1/2)$.

% and
%\begin{equation*}
%_0\partial_t^\alpha \phi_k (t) =\Gamma(\alpha +1) \chi_{[t_{k-1}, T]}(t).
%\end{equation*}
Further, we introduce the following spaces
\begin{equation}\label{eqn:spaceVt}
\bVt= \text{span}  \{\phi_k(t) \}_{k=1}^K
\quad\mbox{and}\quad \mathbb{W}_\tau := \text{span} \{ \chi_{[t_{k-1}, T]}(t)  \}_{k=1}^K.
\end{equation}
With the tensor product notation $\otimes$, the solution space $\bBal \subset \Bal(Q_T)$
and  test space $ \bVht  \subset  V(Q_T)$ are respectively defined by
\begin{equation}\label{eqn:BV}
\bBal :=  \mathbb{X}_h \otimes \bVt \quad\mbox{and}\quad
\bVht  := \mathbb{X}_h \otimes \mathbb{W}_\tau.
\end{equation}
The functions in the spaces $\bBal$ and $\bVht$ are products
of standard $C^0$-conforming finite element in space and
``fractionalized'' piecewise constant and piecewise constant functions in time, respectively.
The FEM problem of \eqref{eqn:BV-weak} reads: given $f\in V^*$,
find $u_{h\tau} \in \bBal$ such that
\begin{equation}\label{eqn:BV-weak-h}
 a(u_{h \tau}, \phi)  \equiv
\langle {_0\partial_t^\alpha} u_{h \tau}, \phi \rangle _{L^2(Q_T)} + D(u_{h \tau}, \phi)
                           = \langle f, \phi \rangle_{L^2(Q_T)} \quad \forall \phi \in \bVht.
\end{equation}
The bilinear form $ a(\cdot, \cdot)$ is non-symmetric, and to show the existence and stability of the
solution  $u_{h \tau}$, we need to establish a discrete analogue of the inf-sup condition \eqref{eq:inf-sup}. To
prove this, we first introduce and study the $L^2$-projection onto the space $\mathbb{W}_\tau$.

%%%%%%%%%%%%%%%%%%%%%%%%%%%%%%%%
\subsection{The projection $\Pi_\tau$ and its properties}
\label{sec:L2projection}
%%%%%%%%%%%%%%%%%%%%%%%%%%

For functions $v(t)$ defined on $(0,T)$, we introduce the $L^2$-projection
$\Pi_\tau:~L^2(0,T)\to \mathbb{W}_\tau$ by
\begin{equation}\label{eqn:L2-orth-W1}
(\Pi_\tau v, \phi)_{L^2(0,T)}= (v, \phi)_{L^2(0,T)},  \quad \forall \phi \in \bWt,
\end{equation}
where  $(\cdot,\cdot)_{L^2(0,T)}$ denotes the inner product on the space $L^2(0,T)$,
or equivalently
$$
(\Pi_\tau v) (t) = \tau^{-1}\int_{t_n}^{t_{n+1}} v(t) \, dt, \quad \text{for} \quad t\in [t_n,t_{n+1}) :=[t_n, t_n+\tau).
$$
Then the operator $\Pi_\tau$ satisfies the error estimate:
\begin{equation}\label{eqn:err-L2-W}
\|v-\Pi_\tau v\|_{L^2(0,T)}\leq c\tau^s\|v\|_{H^s(0,T)},\quad s\in[0,1].
\end{equation}
Below, we study the $L^2$-stability
of the operator $\Pi_\tau$  when restricted to the space $\bVt$.  This is
given in Lemma~\ref{ptau-stability} below, whose proof will require the next result.

\begin{lemma}\label{move-alpha}
For $u\in H^\frac{\alpha}{2}(0,T)$ and $v\in\widetilde H_L^\alpha(0,T)$,
\begin{equation*}
  (u,{_0\partial_t^\alpha v})_{L^2(0,T)} =({_t\partial_T^\frac{\alpha}{2}} u,\,{_0\partial_t^\frac{\alpha}{2}} v)_{L^2(0,T)}.
\end{equation*}
\end{lemma}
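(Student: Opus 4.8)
The plan is to factor the full-order fractional derivative into two half-order derivatives and then transfer one of them onto $u$ by means of the integration-by-parts identity \eqref{eqn:adjoint} at order $\alpha/2$. Concretely, for $v\in\widetilde H_L^\alpha(0,T)$ I would first establish the composition (semigroup) identity
\[
 {_0\partial_t^\alpha} v = {_0\partial_t^{\alpha/2}}\bigl({_0\partial_t^{\alpha/2}} v\bigr),
\]
together with the mapping property that $w:={_0\partial_t^{\alpha/2}}v\in\widetilde H_L^{\alpha/2}(0,T)$. Granting these, I write $(u,{_0\partial_t^\alpha} v)_{L^2(0,T)}=(u,{_0\partial_t^{\alpha/2}} w)_{L^2(0,T)}$ and apply \eqref{eqn:adjoint} with $\alpha$ replaced by $\alpha/2$, in the form $\int_0^T({_0\partial_t^{\alpha/2}} w)\,u\,dt=\int_0^T w\,({_t\partial_T^{\alpha/2}} u)\,dt$. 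This gives exactly $(u,{_0\partial_t^\alpha} v)_{L^2(0,T)}=({_t\partial_T^{\alpha/2}} u,{_0\partial_t^{\alpha/2}} v)_{L^2(0,T)}$, which is the claimed identity.

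Next I would check that the two factors meet the hypotheses of \eqref{eqn:adjoint} at order $\alpha/2$, namely one in $\widetilde H_L^{\alpha/2}(0,T)$ and one in $\widetilde H_R^{\alpha/2}(0,T)$. The factor $w$ lies in $\widetilde H_L^{\alpha/2}(0,T)$ by the mapping property above. For $u$, the key observation is that $\alpha\in(0,1)$ forces $\alpha/2\in(0,1/2)$; in this subcritical range extension by zero is bounded, so the three spaces $H^{\alpha/2}(0,T)$, $\widetilde H_L^{\alpha/2}(0,T)$, $\widetilde H_R^{\alpha/2}(0,T)$ coincide with equivalent norms (no boundary trace is detected below order $1/2$). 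Hence the hypothesis $u\in H^{\alpha/2}(0,T)$ already places $u$ in $\widetilde H_R^{\alpha/2}(0,T)$, legitimizing the use of \eqref{eqn:adjoint}. A density argument, approximating $v$ by smooth functions vanishing near $t=0$ and passing to the limit using continuity of each operator in the relevant norms, lets me treat all manipulations as classically valid.

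I expect the main obstacle to be justifying the composition law ${_0\partial_t^\alpha} v={_0\partial_t^{\alpha/2}}{_0\partial_t^{\alpha/2}} v$ and the mapping ${_0\partial_t^{\alpha/2}}:\widetilde H_L^\alpha\to\widetilde H_L^{\alpha/2}$, since the semigroup property of Riemann--Liouville derivatives is delicate and generally fails without zero initial data. I would resolve this exactly as in the proof of Lemma~\ref{lem:pdf}: extend $v$ by zero to $\widetilde v$ on $\mathbb{R}$, so that ${_0\partial_t^\beta} v$ is the restriction of the Fourier-multiplier operator ${_{-\infty}\partial_t^\beta}\widetilde v$ with symbol $(-\mathrm i\xi)^\beta$. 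The elementary factorization $(-\mathrm i\xi)^{\alpha/2}(-\mathrm i\xi)^{\alpha/2}=(-\mathrm i\xi)^\alpha$ then yields both the composition law and, via the Plancherel characterization of $\widetilde H_L^{\alpha/2}$, the mapping property. Alternatively, the whole lemma can be proved in one stroke on the Fourier side: with $\widetilde u,\widetilde v$ the zero extensions, Parseval turns both sides into $\int_{\mathbb R}\overline{\widehat{\widetilde u}(\xi)}\,(-\mathrm i\xi)^\alpha\widehat{\widetilde v}(\xi)\,d\xi$, once one uses $\overline{(\mathrm i\xi)^{\alpha/2}}=(-\mathrm i\xi)^{\alpha/2}$ for the symbol of ${_t\partial_T^{\alpha/2}}$; the only care needed there is the bookkeeping of complex conjugates and the principal branch of the fractional power. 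Either route closes the proof.
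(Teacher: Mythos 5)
Your proof is correct, and your primary route is executed differently from the paper's, even though the underlying idea (split $\alpha=\tfrac\alpha2+\tfrac\alpha2$ and move half the derivative onto $u$) is the same. The paper works entirely on the Fourier side: it extends $u$ by zero and, crucially, extends $v$ by \emph{even reflection} about $t=T$ before extending by zero, so that $\widetilde v$ stays in $H^\alpha(\mathbb{R})$ with compact support and Parseval can be applied to the full-order multiplier without further ado; the identity is then read off from $(-\mathrm i\xi)^{\alpha/2}(-\mathrm i\xi)^{\alpha/2}=(-\mathrm i\xi)^{\alpha}$ and causality of the one-sided operators. Your main argument instead stays on the interval: you factor ${_0\partial_t^\alpha}v={_0\partial_t^{\alpha/2}}w$ with $w={_0\partial_t^{\alpha/2}}v\in\widetilde H_L^{\alpha/2}(0,T)$ (cleanest via $v={_0I_t^\alpha}g$, $g\in L^2$, so $w={_0I_t^{\alpha/2}}g$, which avoids the delicacies of the semigroup law for derivatives altogether), then invoke the already-stated adjoint identity \eqref{eqn:adjoint} at order $\alpha/2$, using the genuinely useful observation that $\alpha/2<1/2$ forces $H^{\alpha/2}(0,T)=\widetilde H_R^{\alpha/2}(0,T)$, so the mere hypothesis $u\in H^{\alpha/2}(0,T)$ suffices. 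This buys a proof that leans on results already in the paper and makes transparent why the asymmetric hypotheses on $u$ and $v$ are enough. One caution about your ``one-stroke Fourier'' alternative: if you extend $v$ by zero on \emph{both} sides, then for $\alpha>1/2$ the extension $\widetilde v$ is generally not in $H^\alpha(\mathbb{R})$ (jump at $t=T$), so $(-\mathrm i\xi)^{\alpha}\widehat{\widetilde v}\notin L^2$ and the Parseval step at full order needs an extra regularization; this is exactly what the paper's reflection extension is there to avoid, and your half-order factorization avoids it too since zero extension is bounded below order $1/2$.
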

\begin{proof}
Let $u$ and $v$ be as in the lemma.  Let $\widetilde u $ denote the extension of $u$ by zero to $\mathbb{R}$ and
\begin{equation*}
\widetilde v(t) =\left \{ \begin{aligned} 0:&\qquad \hbox{if }t\notin
	[0,2T],\\
	v(t):& \qquad \hbox{if }t\in[0,T], \\
	v(2T-t):& \qquad \hbox{if } t\in (T,2T].
	\end{aligned} \right .
\end{equation*}
Then there holds
\begin{equation*}
(u,{_0\partial_t^\alpha v})_{L^2(0,T)} = (\widetilde u,{\,_{-\infty}\partial_t^{\alpha}} \widetilde v)_{L^2(\mathbb{R})} =
	({\,_t\partial_\infty^\frac{\alpha}{2}} \widetilde u,{\,_{-\infty}\partial_t^\frac{\alpha}{2}} \widetilde v)_{L^2(\mathbb{R})}
	=({_t\partial_T^\frac{\alpha}{2}} u,\,{_0\partial_t^\frac{\alpha}{2}}v)_{L^2(0,T)},
\end{equation*}
where the middle equality followed by examining the expressions after
applying the Fourier transform as in the proof of Lemma~\ref{lem:pdf}.
\end{proof}
By using arguments similar to Lemma \ref{lem:pdf2}
(cf. also \cite[pp. 90]{KilbasSrivastavaTrujillo:2006})
we conclude that that there is a constant $c_\alpha$ satisfying
\begin{equation}\label{u-dal}
c_\alpha^{-1} \|v\|^2_{H^\frac{\alpha}{2}(0,T)} \le (v,{\,_0\partial_t^\alpha}v)_{L^2(0,T)} \le
c_\alpha\|v\|^2_{H^\frac{\alpha}{2}(0,T)},\quad\forall v\in \widetilde H_L^\alpha(0,T).
\end{equation}

\begin{lemma}\label{ptau-stability}
There is a constant $c(\alpha)>0$ such that
\begin{equation*}
c(\alpha) \|v\|^2_{L^2(0,T)} \le \|\Pi_\tau v \|^2_{L^2(0,T)} \le \|v\|^2_{L^2(0,T)}, \quad\forall
	v\in \bVt.
\end{equation*}
\end{lemma}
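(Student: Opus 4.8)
The upper bound is immediate: since $\Pi_\tau$ is the orthogonal $L^2(0,T)$-projection onto $\bWt$, it is a contraction, so $\|\Pi_\tau v\|_{L^2(0,T)}\le\|v\|_{L^2(0,T)}$ for every $v$. All the work lies in the lower bound, and I would first record two reductions. Under the dilation $t\mapsto t/\tau$ the basis function $\phi_k$ scales by $\tau^\alpha$, the mesh becomes the unit mesh, and both $\|v\|_{L^2}$ and $\|\Pi_\tau v\|_{L^2}$ acquire the same factor $\tau^{\alpha+1/2}$; hence the ratio $\|\Pi_\tau v\|_{L^2}/\|v\|_{L^2}$ is scale-invariant, so it suffices to produce a bound uniform in the number of intervals $K$, and the target constant is genuinely $\tau$-free. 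Moreover, by duality within $\bWt$,
\begin{equation*}
\|\Pi_\tau v\|_{L^2(0,T)}=\sup_{0\neq\psi\in\bWt}\frac{(v,\psi)_{L^2(0,T)}}{\|\psi\|_{L^2(0,T)}},
\end{equation*}
so it is enough to exhibit, for each $v\in\bVt$, a single test function $\psi\in\bWt$ with $(v,\psi)\ge c(\alpha)\|v\|\,\|\psi\|$; equivalently, to bound the angle between the subspaces $\bVt$ and $\bWt$ away from a right angle, uniformly in $K$.

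The natural test function is dictated by the defining identity $\dal\phi_k=\Gamma(\alpha+1)\chi_{[t_{k-1},T]}$, which shows that $w:=\Gamma(\alpha+1)^{-1}\dal v$ lies in $\bWt$ and that $v=\Gamma(\alpha+1)\,{_0I_t^\alpha}w$. Taking $\psi=w$ and using $\dal v\in\bWt$ to replace $v$ by $\Pi_\tau v$ in the pairing, the coercivity \eqref{u-dal} gives
\begin{equation*}
(v,w)_{L^2(0,T)}=\Gamma(\alpha+1)^{-1}(v,\dal v)_{L^2(0,T)}\ge \frac{c_\alpha^{-1}}{\Gamma(\alpha+1)}\,\|v\|^2_{H^{\alpha/2}(0,T)}\ge \frac{c_\alpha^{-1}}{\Gamma(\alpha+1)}\,\|v\|^2_{L^2(0,T)},
\end{equation*}
so $w$ is strongly correlated with $v$. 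The difficulty is that $\|w\|$ is too large, so rather than estimate it directly I would split the order symmetrically: since $(v,\dal v)=(\Pi_\tau v,\dal v)$, Lemma \ref{move-alpha} (applied with $\Pi_\tau v\in H^{\alpha/2}(0,T)$ in the first slot) yields
\begin{equation*}
(v,\dal v)_{L^2(0,T)}=({_t\partial_T^{\alpha/2}}\Pi_\tau v,\,{_0\partial_t^{\alpha/2}}v)_{L^2(0,T)}\le \|{_t\partial_T^{\alpha/2}}\Pi_\tau v\|_{L^2(0,T)}\,\|{_0\partial_t^{\alpha/2}}v\|_{L^2(0,T)},
\end{equation*}
and together with the norm equivalence $\|{_0\partial_t^{\alpha/2}}v\|_{L^2}\sim\|v\|_{H^{\alpha/2}}$ this reduces the claim to an inequality of the form $\|v\|_{L^2}\le\|v\|_{H^{\alpha/2}}\le C(\alpha)\,\|{_t\partial_T^{\alpha/2}}\Pi_\tau v\|_{L^2}$.

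The crux is then to pass from this fractional-order control of $\Pi_\tau v$ back to its plain $L^2$ norm with a constant independent of $\tau$ (equivalently, of $K$). This is precisely where the ``fractionalized'' structure of $\bVt$ must enter: a generic inverse inequality would only give $\|{_t\partial_T^{\alpha/2}}\Pi_\tau v\|_{L^2}\le c\tau^{-\alpha/2}\|\Pi_\tau v\|_{L^2}$, i.e. in the wrong direction, and indeed no uniform bound $\|\dal v\|_{L^2}\lesssim\|v\|_{L^2}$ can hold on $\bVt$ (every soft estimate I tried collapses to such a forbidden bound). I therefore expect the decisive step to be a direct, structure-specific computation rather than an abstract argument: after the scaling reduction, the mass matrix $M$ of $\bVt$, the Gram matrix $G$ of the step basis, and the cross-Gram matrix between the two bases have Toeplitz-like form with entries that are explicit differences of powers, so one can aim to bound the symbol of the resulting operator below by $c(\alpha)$ times that of $M$, uniformly in $K$; alternatively one argues by scaling together with a no-resonance analysis as $K\to\infty$. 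Establishing this uniform lower bound—and tracking its unavoidable degeneration as $\alpha\to1$, where it reduces to the $\tau$-dependent CFL constant of \cite{LarssonMolteni:2016}—is the main obstacle, the remaining manipulations above being routine.
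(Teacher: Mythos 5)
Your upper bound and your reduction to the estimate $\|v\|_{H^{\alpha/2}(0,T)}\le c\,\|{_t\partial_T^{\alpha/2}}\Pi_\tau v\|_{L^2(0,T)}\le c\,\|\Pi_\tau v\|_{H^{\alpha/2}(0,T)}$ (via Lemma \ref{move-alpha}, the fact that $\dal v\in\bWt$ for $v\in\bVt$, and \eqref{u-dal}) coincide with the first half of the paper's proof. The genuine gap is that you then abandon this route at exactly the point where it closes. You dismiss the inverse inequality $\|\Pi_\tau v\|_{H^{\alpha/2}(0,T)}\le c\,\tau^{-\alpha/2}\|\Pi_\tau v\|_{L^2(0,T)}$ on $\bWt$ as being ``in the wrong direction,'' but it is precisely the inequality the paper uses: combined with your own reduction it yields
\begin{equation*}
\|v\|_{H^{\alpha/2}(0,T)}\le c\,\tau^{-\alpha/2}\,\|\Pi_\tau v\|_{L^2(0,T)}.
\end{equation*}
The idea you are missing is that one should not try to dominate $\|v\|_{L^2(0,T)}$ itself by a $\tau$-free chain of fractional norms; instead one bounds the orthogonal complement. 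By the Pythagorean identity $\|v\|_{L^2(0,T)}^2=\|\Pi_\tau v\|_{L^2(0,T)}^2+\|(I-\Pi_\tau)v\|_{L^2(0,T)}^2$ together with the approximation property \eqref{eqn:err-L2-W} of $\Pi_\tau$ at order $s=\alpha/2$,
\begin{equation*}
\|(I-\Pi_\tau)v\|_{L^2(0,T)}\le c\,\tau^{\alpha/2}\,\|v\|_{H^{\alpha/2}(0,T)}\le c\,\tau^{\alpha/2}\cdot\tau^{-\alpha/2}\,\|\Pi_\tau v\|_{L^2(0,T)}=c\,\|\Pi_\tau v\|_{L^2(0,T)},
\end{equation*}
so the two powers of $\tau$ cancel and $\|v\|_{L^2(0,T)}^2\le(1+c^2)\|\Pi_\tau v\|_{L^2(0,T)}^2$ follows.

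Your fallback plan (a symbol analysis of the Toeplitz Gram matrices, or a no-resonance argument as $K\to\infty$) is only announced, not carried out, and you yourself identify it as the main obstacle; as written the proposal therefore does not constitute a proof. Your scaling observation and the remark that the constant must degenerate as $\alpha\to1$ are correct but become unnecessary once the cancellation above is seen.
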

\begin{proof}
The second inequality follows directly from the definition of $\Pi_\tau$. For the first, we note
\begin{equation}\label{ortho-v}
	\|v\|_{L^2(0,T)}^2= \|\Pi_\tau v\|_{L^2(0,T)}^2+ \|(I-\Pi_\tau)v\|_{L^2(0,T)}^2.
\end{equation}
The approximation property \eqref{eqn:err-L2-W} implies that for any $v \in \bVt$
\begin{equation}
	\|(I-\Pi_\tau)v\|_{L^2(0,T)}\le c \tau^\frac{\alpha}{2} \|v\|_{H^\frac{\alpha}{2}(0,T)}.
	\label{impt}
\end{equation}
Since for any $v\in\bVt$, ${\,_0\partial_t^\alpha} v$ belongs to $\bWt$, Lemma~\ref{move-alpha} and \eqref{u-dal} imply
\begin{equation}
	\begin{aligned} c^{-1}  \|v\|_{ H^\frac{\alpha}{2}(0,T)}^2 &\le (v,{\,_0\partial_t^\alpha} v)_{L^2(0,T)}
	=(\Pi_\tau v,{_0\partial_t^\alpha} v)_{L^2(0,T)}\\
	&=({\,_t\partial_T^\frac{\alpha}{2}} (\Pi_\tau v),{\,_0\partial_t^\frac{\alpha}{2}} v)_{L^2(0,T)} \le c \|\Pi_\tau v
	\|_{H^\frac{\alpha}{2}(0,T)} \|v\|_{H^\frac{\alpha}{2}(0,T)}.
	\end{aligned}
	\label{partI}
\end{equation}
Note that  $\bWt$ satisfies inverse inequalities,  namely, for $s\in (0,1/2)$,
\begin{equation*}
\|\Pi_\tau v\|_{H^{s}(0,T)} \le c_s \tau^{-s} \| \Pi_\tau v \|_{L^2(0,T)}.
\end{equation*}
Using this with $s=\alpha/2$ in \eqref{partI} implies
\begin{equation}	\label{hnal2}
	\| v\|_{H^\frac{\alpha}{2}(0,T)} \le c \tau^{-\frac{\alpha}{2}} \| \Pi_\tau v\|_{L^2(0,T)}.
\end{equation}
Substituting \eqref{hnal2} into \eqref{impt} and combining \eqref{ortho-v} give
\begin{equation*}
   \|v\|_{L^2(0,T)} \le c \|\Pi_\tau v\|_{L^2(0,T)},
\end{equation*}
which completes the proof of the lemma.
\end{proof}

In Table \ref{tab:Pi-bound}, we give the
best constant  $c(\alpha)\equiv c(\alpha,K)$ (recall $\tau K =T$) as a function of the mesh parameter $K=T/\tau$
when $T=1$.  The results clearly show the convergence to a lower bound as
$K$ becomes large.  Further, we note that it is a consequence of  the work of
Larsson and Monteli \cite{LarssonMolteni:2016}, that $c(\alpha) \to 0$ when $\alpha \to 1$,
for which our discretization coincides with that in \cite{LarssonMolteni:2016}.

\begin{table}[hbt!]
	\caption{The lower bound $c(\alpha)$ for the $L^2$-norm of $\Pi_\tau $ for various $\alpha$.}
%=0.3$, $0.5$, $0.7$ and $0.9$.}
	\label{tab:Pi-bound}
	\centering
	\begin{tabular}{|c|cccccc|}
		\hline
		$\alpha {\backslash}  K$   & $20$ &$40$ & $80$ & $160$ & $320$ & $640$ \\
		\hline
		0.3     & 0.7711 & 0.7697 & 0.7693 & 0.7693 & 0.7693 & 0.7692 \\
		\hline
		0.5      & 0.4754 & 0.4714  & 0.4703 & 0.4700 & 0.4700   & 0.4699\\
		\hline
		0.7      & 0.1982 & 0.1911 &  0.1891 & 0.1886 & 0.1884  &  0.1884 \\
		\hline
		0.9      & 0.0326 & 0.0251 &  0.0228 & 0.0221 & 0.0220 & 0.0219 \\
		\hline
             0.98    & 0.0076 & 0.0030 &  0.0015 & 0.0011 & 0.0010 & 0.0010      \\
             \hline
	\end{tabular}
\end{table}

\subsection{Stability of the Petrov-Galerkin FEM}\label{sec:FEM_stability}
Now we prove a discrete inf-sup condition to guarantee existence and uniqueness of the
finite element solution $u_{h\tau}$. In this part, we consider $K$, $N$, $\tau$ and $h$ as
fixed, although the estimates are independent of them.

Let $\{\psi_{j}\}_{j=1}^N\subset \mathbb{X}_h$ denote an $L^2(\Omega)$-orthonormal basis for $\mathbb{X}_h$
of generalized eigenfunctions (of the negative discrete Laplacian), i.e.,
\begin{equation*}
  (\nabla \psi_j,\nabla \chi)_{L^2(\Omega)} = \lambda_{j,h} (\psi_j,\chi)_{L^2(\Omega)} \quad \forall \chi\in \mathbb X_h.
\end{equation*}
It follows that for any $\phi\in \mathbb X_h$, there hold
\begin{equation*}
\phi = \sum_{j=1}^N (\phi,\psi_j)_{L^2(\Omega)} \psi_j,\quad \|\phi\|^2_{L^2(\Omega)}=\sum_{j=1}^N  (\phi,\psi_j)_{L^2(\Omega)}^2
\quad \hbox{and} \quad
\|\nabla \phi\|^2_{L^2(\Omega)}=\sum_{j=1}^N \lambda_{j,h}  (\phi,\psi_j)_{L^2(\Omega)}^2.
\end{equation*}
We also define
\begin{equation*}
N_h \phi = \sum_{j=1}^N \lambda_{j,h}^{-1} (\phi,\psi_j)_{L^2(\Omega)}
\psi_j\quad \hbox{and} \quad
\|\phi\|^2_{H^{-1}_h(\Omega)}=\sum_{j=1}^N \lambda_{j,h}^{-1}
(\phi,\psi_j)_{L^2(\Omega)}^2.
\end{equation*}
The operator $N_h$ is a discrete Riesz map, i.e., the inverse of the discrete Laplacian on
the space $\mathbb{X}_h$. It is well known that there is a constant $c$ independent of $h$ satisfying
\begin{equation}\label{hm1hm1h}
\|\phi \|_{H^{-1}(\Omega)}\le c \|\phi\|_{H^{-1}_h(\Omega)} \quad \forall \phi\in \mathbb X_h.
\end{equation}
Further, due to the tensor construction of the spaces $\bBal$ and $\bVht$,
functions $v\in\bBal$ and $\phi\in \bVht$ can be expanded as
\begin{equation*} %\label{vsum}
{v(x,t)}=\sum_{i,j} c_{ij}\, \phi_i(t) \psi_j(x)
\quad \mbox{and} \quad {\phi(x,t)}=\sum_{i,j} d_{ij} \chi_i(t) \psi_j(x),
\end{equation*}
were the summation over $i,j$ denotes the sum over $i=1,\ldots , K$ and $j=1,\ldots N$.
This discussion extends to  $Q_T$ as well: for example, for $v\in \bBal$, we have
the following expansion (with $
v_j(t) =  (v(\cdot,t),\psi_j)_{L^2(\Omega)}$)
\begin{equation*}
  \begin{aligned}
  D(v,v) &= \sum_{j=1}^N \lambda_{j,h}   \|v_j(t)\|_{L^2(0,T)}^2, \\ %,\qquad
 \|{\,_0\partial_t^\alpha} v \|_{L^2(Q_T)}^2 & = \sum_{j=1}^N \|{\,_0\partial_t^\alpha} v_j(t)\|_{L^2(0,T)}^2 ,\\
  \|{\,_0\partial_t^\alpha} v \|^2_{L^2(0,T;H^{-1}(\Omega))} &\le c \|{\,_0\partial_t^\alpha} v \|_{L^2(0,T;H_h^{-1}(\Omega))}^2=c \sum_{j=1}^N \lambda_{j,h}^{-1}
  \|{\,_0\partial_t^\alpha} v_j(t)\|_{L^2(0,T)}^2 \, .
  \end{aligned}
\end{equation*}
Similarly, for $\phi\in \bVht$, we have
\begin{equation*}
\|\phi\|_V^2=
\sum_{j=1}^N \lambda_{j,h}\|\phi_j(t)\|_{L^2(0,T)}^2 \,\quad \mbox{with } \phi_j(t)=(\phi(\cdot,t),\psi_j)_{L^2(\Omega)}.
\end{equation*}

Now we give a discrete inf-sup condition, which
implies the well-posedness of problem \eqref{eqn:BV-weak-h}.

\begin{lemma}%[discrete inf-sup condition]
\label{discr-inf-sup}
There is a constant $c_\alpha>0$, independent of $h$ and $\tau$, such that
\begin{equation}\label{dinf-sup}
 \sup_{\phi \in \bVht} \frac{a(v,\phi) }{\|\phi \|_V} \ge c_\alpha\|v\|_{B^\alpha(Q_T)}
               \quad \forall   v \in \bBal.
\end{equation}
\end{lemma}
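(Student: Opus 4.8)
The plan is to imitate the continuous argument of Lemma \ref{lem:inf-sup}, where for given $v$ the test function $\phi_v = v + N\,{_0\partial_t^\alpha}v$ produced $a(v,\phi_v)\ge\|v\|_{B^\alpha(Q_T)}^2$. The obstruction in the discrete setting is that the trial and test spaces differ: for $v\in\bBal$ the temporal factor is a fractionalized piecewise constant and does not lie in $\bWt$, so $v\notin\bVht$ and $v$ itself cannot be used in a test function. Two observations make the analogue work. First, by \eqref{eqn:fy} one has ${_0\partial_t^\alpha}v\in\mathbb{X}_h\otimes\bWt=\bVht$, so the ``${_0\partial_t^\alpha}$'' part causes no trouble once $N$ is replaced by the discrete Riesz map $N_h$. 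Second, the missing ``$v$'' part should be replaced by its temporal $L^2$-projection $\Pi_\tau v$, which does land in $\bVht$; this is exactly where Lemma \ref{ptau-stability} enters.

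Concretely, I would diagonalize in space. Writing $v=\sum_j v_j(t)\psi_j(x)$ with $v_j\in\bVt$, the $L^2(\Omega)$-orthonormality of $\{\psi_j\}$ and the eigenrelation decouple the bilinear form,
\begin{equation*}
a(v,\phi)=\sum_{j=1}^N\big[({_0\partial_t^\alpha}v_j,\phi_j)_{L^2(0,T)}+\lambda_{j,h}(v_j,\phi_j)_{L^2(0,T)}\big],\qquad \phi=\sum_j\phi_j(t)\psi_j(x).
\end{equation*}
I then choose the test function mode by mode as $\phi_j=\Pi_\tau v_j+\lambda_{j,h}^{-1}\,{_0\partial_t^\alpha}v_j\in\bWt$, the discrete counterpart of $\phi_v$. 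Since ${_0\partial_t^\alpha}v_j\in\bWt$ and $\Pi_\tau$ is the self-adjoint $L^2(0,T)$-projection onto $\bWt$, the cross terms simplify via $({_0\partial_t^\alpha}v_j,\Pi_\tau v_j)_{L^2(0,T)}=({_0\partial_t^\alpha}v_j,v_j)_{L^2(0,T)}$ and $(v_j,\Pi_\tau v_j)_{L^2(0,T)}=\|\Pi_\tau v_j\|_{L^2(0,T)}^2$, yielding
\begin{equation*}
a(v,\phi)=\sum_{j=1}^N\Big[2({_0\partial_t^\alpha}v_j,v_j)_{L^2(0,T)}+\lambda_{j,h}\|\Pi_\tau v_j\|_{L^2(0,T)}^2+\lambda_{j,h}^{-1}\|{_0\partial_t^\alpha}v_j\|_{L^2(0,T)}^2\Big].
\end{equation*}

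For the lower bound I would drop the first term, which is non-negative by \eqref{u-dal} (note $v_j\in\widetilde H_L^\alpha(0,T)$), and invoke the stability estimate $\|\Pi_\tau v_j\|_{L^2}^2\ge c(\alpha)\|v_j\|_{L^2}^2$ of Lemma \ref{ptau-stability} to get $a(v,\phi)\ge c(\alpha)\sum_j[\lambda_{j,h}\|v_j\|_{L^2}^2+\lambda_{j,h}^{-1}\|{_0\partial_t^\alpha}v_j\|_{L^2}^2]=:c(\alpha)\|v\|_{B^\alpha_h}^2$, where $\|v\|_{B^\alpha_h}$ is the discrete energy with $H^{-1}_h$ in place of $H^{-1}$; by \eqref{hm1hm1h} this dominates $\|v\|_{B^\alpha(Q_T)}^2$ up to a constant. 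For the test-function norm, the elementary half $\|\Pi_\tau v_j\|_{L^2}\le\|v_j\|_{L^2}$ of Lemma \ref{ptau-stability} gives $\|\phi\|_V^2=\sum_j\lambda_{j,h}\|\phi_j\|_{L^2}^2\le 2\|v\|_{B^\alpha_h}^2$. Dividing the two estimates produces $a(v,\phi)/\|\phi\|_V\ge (c(\alpha)/\sqrt2)\,\|v\|_{B^\alpha_h}\ge c_\alpha\|v\|_{B^\alpha(Q_T)}$, which is \eqref{dinf-sup}; well-posedness of \eqref{eqn:BV-weak-h} then follows since $\dim\bBal=\dim\bVht$.

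The main obstacle is entirely concentrated in the stability of $\Pi_\tau$ on $\bVt$, i.e. Lemma \ref{ptau-stability}, since it is precisely the lower bound $\|\Pi_\tau v_j\|_{L^2}\ge\sqrt{c(\alpha)}\,\|v_j\|_{L^2}$ that recovers the $D(v,v)$ contribution after the trial function has been projected into the piecewise-constant test space; everything else is bookkeeping in the spatial eigenbasis. This also makes transparent both the $\alpha$-dependence of $c_\alpha$ and its degeneration as $\alpha\to1$.
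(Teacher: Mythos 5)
Your proof is correct and follows essentially the same route as the paper's: both replace the continuous test function $v+N\,{_0\partial_t^\alpha}v$ of Lemma \ref{lem:inf-sup} by its discrete counterpart built from $\Pi_\tau v$ and $N_h\,{_0\partial_t^\alpha}v$, and both hinge on the lower bound of Lemma \ref{ptau-stability} together with \eqref{hm1hm1h}. The only cosmetic difference is that the paper splits into two cases, testing with $N_h\,{_0\partial_t^\alpha}v$ or with $\Pi_\tau v$ according to which term of the discrete energy dominates, whereas you use the single combined test function and cancel the cross terms explicitly in the spatial eigenbasis; both arguments isolate the same $\alpha$-dependence of the inf-sup constant.
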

\begin{proof}
For any $v\in\bBal$, we define a norm
\begin{equation*}
	\tribar v \tribar^2= \|{\,_0\partial_t^\alpha} v\|_{L^2(0,T;H^{-1}_h(\Omega))}^2 +D(\Pi_\tau
	v,\Pi_\tau v).
\end{equation*}
Meanwhile, we set $\phi\in \bVht$ by
\begin{equation*}
	\phi=\left \{\begin{aligned}  \hbox{Case 1: }& N_h {\,_0\partial_t^\alpha} v\quad \hbox{ if } \|{\,_0\partial_t^\alpha} v\|_{L^2(0,T;H^{-1}_h(\Omega))}^2\ge D(\Pi_\tau	v,\Pi_\tau v),\\
	\hbox{Case 2: }&\phi= \Pi_h v\quad\hbox{otherwise.}
	\end{aligned} \right .
\end{equation*}
For $\phi$  given by Case 1, we have
\begin{equation*}
 a(v,\phi) = \|{\,_0\partial_t^\alpha} v \|_{L^2(0,T;H^{-1}_h(\Omega))}^2 + (v,{\,_0\partial_t^\alpha} v)_{{L^2(Q_T)}}
	\ge \|{\,_0\partial_t^\alpha} v \|_{L^2(0,T;H^{-1}_h(\Omega))}^2 \ge \tfrac 12 \tribar v \tribar^2.
\end{equation*}
By the definition of the operator $N_h$,
\begin{equation*}
  \begin{aligned}
	\|\phi\|_{ V}^2 =D(\phi,\phi) & =D(N_h{\,_0\partial_t^\alpha} v,N_h{_0\partial_t^\alpha} v)  =({\,_0\partial_t^\alpha} v,N_h {\,_0\partial_t^\alpha} v)_{L^2(Q_T)}\\
     &= \|{\,_0\partial_t^\alpha} v\|_{L^2(0,T;H^{-1}_h(\Omega))}^2\le \tribar v\tribar^2.
  \end{aligned}
\end{equation*}
Alternatively, if $\phi$ is given by Case 2, since ${\,_0\partial_t^\alpha} v\in \bVht $ for $v\in \bBal$, we derive
\begin{equation*}
  \begin{aligned}
   a(v,\phi) &=({\,_0\partial_t^\alpha} v,\Pi_\tau v)_{L^2(Q_T)}+D(v,\Pi_\tau v) \\
   & = ({\,_0\partial_t^\alpha} v,v)_{L^2(Q_T)} + D(\Pi_\tau  v,\Pi_\tau v)\ge  D(\Pi_\tau  v,\Pi_\tau v)\ge \tfrac 12 \tribar v\tribar^2.
  \end{aligned}
\end{equation*}
Also, by Lemma \ref{ptau-stability}, we have
\begin{equation*}
 c_\alpha \|\phi\|_{V}^2 \le D(\Pi_\tau
	v,\Pi_\tau v) \le  \tribar v\tribar^2.
\end{equation*}
Thus we have for any $v\in\bBal$
\begin{equation*}
	\tfrac{1}{2}\tribar v \tribar \le \frac {a(v,\phi)} {\tribar v\tribar} \le c  \frac {a(v,\phi)}
	{\|\phi\|_{V}}\le c  \sup_{\phi\in \bVht } \frac {a(v,\phi)}
	{\|\phi\|_{V}} .
\end{equation*}
Then applying \eqref{hm1hm1h} yields
\begin{equation*}
\big( \|{\,_0\partial_t^\alpha} v \|_{L^2(0,T;H^{-1}(\Omega))}^2 +D(v,v) \big)^\frac12  \le c\tribar v \tribar, \quad\forall v\in \bBal,
\end{equation*}
from which the desired inf-sup condition \eqref{dinf-sup} follows.
\end{proof}

%%%%%%%%%%%%%%%%%%%%%%%%%%%%
\subsection{Linear algebraic problem}
%%%%%%%%%%%%%%%%%%%%%%%%%%

Now we discuss the solution of the resulting linear system.
Let $\chi_\ell(t)=\chi_{[t_{\ell-1},t_\ell]}(t)$ and $\phi_k(t)=(t -
t_{k-1})^{\alpha}\chi_{[t_{k-1}, T]}(t)$. Then we define two matrices by:
$$
M_\tau = \{(\phi_k,\chi_\ell)_{L^2(0,T)} \}_{k,\ell=1}^K
\quad  \mbox{and}   \quad M^\alpha_\tau = \{ ({_0\partial_t^\alpha} \phi_k, \chi_\ell )_{L^2(0,T)} \}_{k,\ell=1}^K.
$$
Since ${_0\partial_t^\alpha}\phi_k (t) =\Gamma(\alpha +1) \chi_{[t_{k-1}, T]}(t)$, we obtain
\[ M_\tau=
\frac{\tau^{\alpha +1}}{\alpha +1} \left [
\begin{matrix}
d_1   & 0    & 0  &  \dots & 0 \\
d_{2}  &  d_1 &  0 & \dots  & 0  \\
d_{3}  & d_{2} & d_1 & \dots & 0 \\
\vdots & \vdots &\vdots &  \ddots & \\
d_{K}& d_{K-1}& d_{K-2}  & \dots & d_1
\end{matrix}
\right ] \quad \mbox{and} \quad
M^\alpha_\tau=
{\tau}{\Gamma(\alpha +1}) \left [
\begin{matrix}
1   & 0    & 0  &  \dots & 0 \\
1  &  1 &  0 & \dots  & 0  \\
1  &  1 & 1 & \dots & 0 \\
\vdots & \vdots &\vdots &  \ddots & \\
1 &  1 &  1   & \dots & 1
\end{matrix}
\right ].
\]
where $d_k=k^{\alpha+1}-(k-1)^{\alpha+1}$, $k=1,2,\ldots,K$.
The matrix $M_\tau$ is the temporal stiffness matrix, and it is Toeplitz.
Similarly, $M^\alpha_\tau$ is the temporal mass matrix.

Likewise, we introduce the ``mass" and ``stiffness" matrices related to the spatial variable $x$:
$$
M_h = \{ (\varphi_i, \varphi_j)_{L^2(\Omega)} \}_{ i,j=1}^N \quad \text{ and }
\quad A_h =  \{ ( \nabla \varphi_i,  \nabla \varphi_j)_{L^2(\Omega)} \}_{ i,j=1}^N,
$$
where $\varphi_i(x)$, $ i=1, \dots, N$, are the nodal basis functions of the space $ \mathbb{X}_h$.

We denote by $U$ the coefficient vector in the representation of the
solution $u_{h,\tau} \in \bBal$,
and by $F$ the vector of the projection of the source $f$ onto $\bVht $.
Then,  problem \eqref{eqn:BV-weak-h}
can be written as an algebraic system
\begin{equation*}
	AU=F, \quad \text{with}
	\quad A = M^\alpha_\tau \otimes M_h + M_\tau \otimes A_h.
\end{equation*}
Due to the block triangular structure of the matrix $A$,
the solution process is essentially time stepping, i.e., solving first for the unknowns
at $t_1=\tau$, and then recursively for $t_k$, $k=2, \dots, K$.

Alternatively, one may take
$$\chi_\ell(t)=\chi_{[t_{\ell-1},t_\ell]}(t) \quad \mbox{ and} \quad
\phi_k(t)=(t - t_{k-1})^{\alpha}\chi_{[t_{k-1}, T]}(t)-(t - t_{k})^{\alpha}\chi_{[t_{k}, T]}(t).
$$
Then with ${_0\partial_t^\alpha}\phi_k (t) =\Gamma(\alpha +1) \chi_{[t_{k-1}, t_k]}(t)$, the matrices $
M_\tau$ and $M^\alpha_\tau$ are given by
\[
M_\tau=
\frac{\tau^{\alpha +1}}{\alpha +1} \left [
\begin{matrix}
e_1   & 0    & 0  &  \dots & 0 \\
e_{2}  &  e_1 &  0 & \dots  & 0  \\
e_{3}  & e_{2} & e_1 & \dots & 0 \\
\vdots & \vdots &\vdots &  \ddots & \\
e_{K}& e_{K-1}& e_{K-2}  & \dots & e_1
\end{matrix}
\right ] \quad \mbox{and} \quad
M^\alpha_\tau=
{\tau}{\Gamma(\alpha +1}) I,
\]
where $e_k=d_{k+1}-d_{k}=(k+1)^{\alpha+1}+(k-1)^{\alpha+1}-2 k^{\alpha+1}$, and $I \in \mathbb{R}^{K \times K}$ is the identity matrix.
This formulation has been used in our implementation.

%%%%%%%%%%%%%%%%%%%%%%%%%%%%%%
\section{Error Estimate or the FEM for Fractional ODEs}\label{sec:conv-ODE}
%%%%%%%%%%%%%%%%%%%%%%%%%%%%%%%

To illustrate the idea of error analysis, we first derive error estimates for fractional ODEs.
\subsection{Fractional ODE}
Consider the following fractional-order ODE: find $u(t)$ such that
\begin{equation}\label{eqn:ode}
_0\partial_t^\alpha  u + \lambda u = f, \quad \forall t \in (0,T), \quad \mbox{with } u(0)=0,
\end{equation}
where the constant $\lambda\in\mathbb{R}$ and $\lambda\ge0$.
The weak form reads: given $f\in L^2(0,T)$, find $u \in \widetilde{H}_L^\alpha(0,T) $
\begin{equation}\label{eqn:weak-ode}
   a_\lambda( u,\phi)  \equiv    ({_0\partial_t^\alpha}  u, \phi)_{L^2(0,T)}+ \lambda (u,\phi)_{L^2(0,T)}
= (f,\phi)_{L^2(0,T)} \quad \forall \phi\in L^2(0,T).
\end{equation}

By choosing $\phi={}_0\partial_t^\alpha v+\lambda v$ in $a_\lambda(\cdot,\cdot)$, since
$({}_0\partial_t^\alpha  v, v)_{L^2(0,T)}\ge 0$ for $v\in\widetilde{H}_L^\al(0,T)$, we deduce
\begin{equation*}
  \begin{aligned}
  a_\lambda(v,\phi) &= \|{}_0\partial_t^\alpha v\|_{L^2(0,T)}^2 + 2\lambda({}_0\partial_t^\alpha v,v)_{L^2(0,T)}+\lambda^2\|v\|^2_{L^2(0,T)}\\
   &\geq \tfrac{1}{2}(\|{}_0\partial_t^\alpha v\|_{L^2(0,T)} +\lambda\|v\|_{L^2(0,T)})^2.
  \end{aligned}
\end{equation*}
Since $ \|\phi\|_{L^2(0,T)} \le  \|{}_0\partial_t^\alpha v\|_{L^2(0,T)} +\lambda\|v\|_{L^2(0,T)}$,
we arrive at the desired inf-sup condition
\[
\|{}_0\partial_t^\alpha v\|_{L^2(0,T)}   + \lambda \|v\|_{L^2(0,T)}
\le 2 \sup_{\phi\in L^2(0,T) }\frac{a_\lambda(v,\phi)}{\quad \|\phi\|_{L^2(0,T)}}.
\]
For $\phi\in L^2(0,T)$, $\phi\neq0$, let $v={_0I_t^\alpha}\phi$.
By Lemma \ref{lem:pdf}, $a_\lambda(v,\phi)=(\phi,\phi)_{L^2(0,T)}+\lambda(\phi,v)_{L^2(0,T)}>0$.
Thus $a_\lambda(\cdot,\cdot)$
satisfies also a compatibility condition and problem \eqref{eqn:weak-ode} is well-posed.

Moreover, if $f\in \widetilde{H}_L^s(0,T)$, then the ODE \eqref{eqn:ode} has a
unique solution $u\in \widetilde{H}_L^{\alpha+s}(0,T)$ and
\begin{equation}\label{eqn:regularity-ode}
\| u \|_{\widetilde{H}_L^{\al+s}(0,T)} + \lambda\| u \|_{\widetilde{H}_L^s(0,T)}\le c \| f \|_{\widetilde{H}_L^s(0,T)},
\end{equation}
where the constant $c$ is independent of $\lambda$. This estimate follows directly from
Theorem \ref{thm:regularity} by identifying the operator $-\Delta$ with the scalar $\lambda$.

\begin{remark}\label{rem:ODEadjoint}
For the adjoint problem, to find   $w \in \widetilde{H}_R^\alpha(0,T) $ such that
$a_\lambda(\phi,w) =(\phi,f)_{L^2(0,T)}$ for all $ \phi \in L^2(0,T)$,
a similar inf-sup condition and regularity pick-up hold.
\end{remark}

With the spaces $\bVt$ and $\mathbb{W}_\tau$ defined in \eqref{eqn:spaceVt}, the Petrov-Galerkin FEM
for problem \eqref{eqn:ode} reads: given $f\in L^2(0,T)$, find $u_\tau \in \bVt$
such that
\begin{equation}\label{eqn:Galerkin-ode}
a_\lambda(u_\tau,\phi)=(f,\phi)_{L^2(0,T)} \quad \forall\phi\in \mathbb{W}_\tau.
\end{equation}
For any $v\in \bVt$, by letting $\phi=\phi_v={}_0\partial_t^\alpha v +\lambda\Pi_\tau v$ and
applying Lemma \ref{lem:pdf2}, and repeating the preceding argument, we derive the following
discrete inf-sup condition
\begin{equation}\label{ODEinfsup-discrete-imp}
\| v \|_{\widetilde{H}_L^\alpha(0,T)} +\lambda\|\Pi_\tau v\|_{L^2(0,T)}
\le c \sup_{\phi\in \mathbb{W}_\tau }\frac{a_\lambda(v ,\phi)}{\quad \|\phi\|_{L^2(0,T)}},
\end{equation}	
where $c$ is independent of $\lambda$. Thus problem
\eqref{eqn:Galerkin-ode} is well-posed and stable in  $\widetilde{H}_L^\alpha(0,T) $-norm.

%%%%%%%%%%%%%%%%%%%%%%%%%%%%%%%%%%5
\subsection{Properties of the fractional Ritz and fractionalized $L^2$-projections}\label{sec:frac-ritz}
%%%%%%%%%%%%%%%%%%%%%%%%%%%%%%%%%%%%%%

For the analysis below, we define a fractional Ritz
projection $R_\tau^\al: \widetilde H_L^\alpha(0,T) \to  \bVt$  by
\begin{equation*}
({}_0\partial_t^\al R_\tau^\al v, \phi)_{L^2(0,T)} = ({}_0\partial_t^\al v, \phi)_{L^2(0,T)}  \quad \forall \phi \in \mathbb{W}_\tau.
\end{equation*}

The operator $R_\tau^\alpha $ has optimal approximation in both $\widetilde H^\alpha_L$- and $L^2$-norms.
\begin{lemma}\label{lem:fracRitz}
For the fractional Ritz projection $R_\tau^\alpha $, there holds
\begin{equation}\label{eqn:err-Ritz-proj}
\tau^\alpha\|{}_0\partial_t^\alpha(v-R_\tau^\alpha v)\|_{L^2(0,T)}+\|v -R_\tau^\al v\|_{L^2(0,T)}
      \le c \tau^{\al+s} \|v\|_{\widetilde{H}^{\al+s}_L(0,T)},\quad 0\leq s\leq 1.
\end{equation}
\end{lemma}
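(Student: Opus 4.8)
The plan is to reduce everything to the $L^2$-projection $\Pi_\tau$ and its approximation property \eqref{eqn:err-L2-W}. The starting point is the observation that, since $R_\tau^\alpha v \in \bVt$ and ${}_0\partial_t^\alpha$ maps $\bVt$ onto $\bWt$ (recall ${}_0\partial_t^\alpha\phi_k=\Gamma(\alpha+1)\chi_{[t_{k-1},T]}$), the element ${}_0\partial_t^\alpha R_\tau^\alpha v$ lies in $\bWt$; the defining relation of $R_\tau^\alpha$ then says precisely that it shares with ${}_0\partial_t^\alpha v$ all $L^2(0,T)$ inner products against test functions in $\bWt$. Hence ${}_0\partial_t^\alpha R_\tau^\alpha v=\Pi_\tau({}_0\partial_t^\alpha v)$, and writing $e=v-R_\tau^\alpha v$ we obtain the two facts driving the proof: the representation ${}_0\partial_t^\alpha e=(I-\Pi_\tau){}_0\partial_t^\alpha v$, and the Galerkin orthogonality $({}_0\partial_t^\alpha e,\phi)_{L^2(0,T)}=0$ for all $\phi\in\bWt$.

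For the first (derivative) term I would combine this representation with \eqref{eqn:err-L2-W}. Since $v\in\widetilde{H}_L^{\alpha+s}(0,T)$, the fractional derivative ${}_0\partial_t^\alpha v$ lies in $\widetilde{H}_L^{s}(0,T)\subset H^s(0,T)$ with $\|{}_0\partial_t^\alpha v\|_{H^s(0,T)}\le c\|v\|_{\widetilde{H}_L^{\alpha+s}(0,T)}$ (the fractional derivative lowers the smoothness index by $\alpha$ on these extension-by-zero spaces, in the spirit of Lemma \ref{lem:Bs-norm}). Applying \eqref{eqn:err-L2-W} to $w={}_0\partial_t^\alpha v$ then gives $\|{}_0\partial_t^\alpha e\|_{L^2(0,T)}=\|(I-\Pi_\tau){}_0\partial_t^\alpha v\|_{L^2(0,T)}\le c\tau^s\|v\|_{\widetilde{H}_L^{\alpha+s}(0,T)}$, and multiplying by $\tau^\alpha$ yields the required $c\tau^{\alpha+s}$ bound for the first term.

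The harder part is the $L^2$-estimate of $e$ itself, for which I would use an Aubin--Nitsche duality argument; this is the main obstacle. Given $\psi\in L^2(0,T)$, set $w={_tI_T^\alpha}\psi$, so that ${}_t\partial_T^\alpha w=\psi$ and $w\in\widetilde{H}_R^\alpha(0,T)$ with $\|w\|_{H^\alpha(0,T)}\le c\|\psi\|_{L^2(0,T)}$ (the right-sided analogue of the norm equivalence in Lemma \ref{lem:Bs-norm}). The adjoint identity \eqref{eqn:adjoint} gives $(e,\psi)_{L^2(0,T)}=(e,{}_t\partial_T^\alpha w)_{L^2(0,T)}=({}_0\partial_t^\alpha e,w)_{L^2(0,T)}$, and inserting the Galerkin orthogonality (subtracting $\Pi_\tau w\in\bWt$), followed by Cauchy--Schwarz and \eqref{eqn:err-L2-W} with exponent $\alpha$, yields
\begin{align*}
(e,\psi)_{L^2(0,T)} &= ({}_0\partial_t^\alpha e,\,w-\Pi_\tau w)_{L^2(0,T)} \\
&\le c\tau^\alpha\|{}_0\partial_t^\alpha e\|_{L^2(0,T)}\,\|w\|_{H^\alpha(0,T)}
\le c\tau^\alpha\|{}_0\partial_t^\alpha e\|_{L^2(0,T)}\,\|\psi\|_{L^2(0,T)}.
\end{align*}
Taking the supremum over $\psi$ gives $\|e\|_{L^2(0,T)}\le c\tau^\alpha\|{}_0\partial_t^\alpha e\|_{L^2(0,T)}$, which combined with the derivative bound of the previous paragraph produces $\|e\|_{L^2(0,T)}\le c\tau^{\alpha+s}\|v\|_{\widetilde{H}_L^{\alpha+s}(0,T)}$. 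The technical point requiring care is the regularity and norm bound for the dual solution $w={_tI_T^\alpha}\psi$ together with the correct use of the left/right adjoint pairing \eqref{eqn:adjoint}; once these are in place, the estimate for every $s\in[0,1]$ follows directly from \eqref{eqn:err-L2-W} and the orthogonality relation, with no separate interpolation step needed.
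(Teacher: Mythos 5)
Your proposal is correct and follows essentially the same route as the paper: a best-approximation bound for $\|{}_0\partial_t^\alpha e\|_{L^2(0,T)}$ followed by an Aubin--Nitsche duality argument with the adjoint solution $w={}_tI_T^\alpha\psi$ and the pairing \eqref{eqn:adjoint}. The only (cosmetic) difference is that you make the identity ${}_0\partial_t^\alpha R_\tau^\alpha v=\Pi_\tau({}_0\partial_t^\alpha v)$ explicit and invoke \eqref{eqn:err-L2-W} directly, whereas the paper passes through a quasi-optimality estimate and cites an external approximation lemma for the rate $c\tau^s\|v\|_{\widetilde{H}_L^{\alpha+s}(0,T)}$.
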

\begin{proof}
Let $e=v-R_\tau^\al v$.
Clearly, for any $v_\tau\in \mathbb{U}_\tau$,
\begin{equation*}
  ({_0\partial_t^\alpha}(R_\tau^\al v -v_\tau),\phi)_{L^2(0,T)}
     =({_0\partial_t^\alpha}(v-v_\tau), \phi)_{L^2(0,T)} \quad \forall \phi \in \mathbb{W}_\tau.
\end{equation*}
Upon taking $\phi={}_0\partial_t^\al(R_\tau^\al v -v_\tau)$ and by the Cauchy-Schwarz inequality, we have
\begin{equation*}
	\|{}_0\partial_t^\al e\|_{L^2(0,T)}\le  2 \inf_{v_\tau \in \bVt}\|{}_0\partial_t^\al(v-v_\tau)\|_{L^2(0,T)}.
\end{equation*}
By repeating the arguments of \cite[Lemma 4.2]{Jin2015petrovSINUM}, we obtain for $ 0 \le s  \le 1$
	\begin{equation}\label{eqn:frac-Ritz}
	\|{}_0\partial_t^\al e\|_{L^2(0,T)}\le c \tau^s
	\|v\|_{\widetilde{H}^{\al+s}_L(0,T)}.
\end{equation}
Now we prove the $L^2$-error bound.
Let $w\in \widetilde{H}_R^{\al}(0,T) $ be the solution to the adjoint problem
\begin{equation*}
(\phi, {_t\partial_T^\alpha}w)_{L^2(0,T)}= (\phi,e)_{L^2(0,T)} \quad \forall \phi \in L^2(0,T).
\end{equation*}
Similar to \eqref{eqn:regularity-ode}, cf. Remark \ref{rem:ODEadjoint},   the solution $w$ satisfies the a priori estimate
\begin{equation*}
  \|w\|_{\widetilde{H}_R^\al(0,T)} \le c \|e \|_{L^2(0,T)}.
\end{equation*}
Then by \eqref{eqn:adjoint} and Galerkin orthogonality, there holds
\begin{equation*}
\begin{aligned}
\|e\|^2_{L^2(0,T)}  & =(e,{_t\partial_T^\alpha} w)_{L^2(0,T)}= ({_0\partial_t^\alpha} e, w - w_\tau)_{L^2(0,T)}\\
&  \le  \|{}_0\partial_t^\al e\|_{L^2(0,T)} \inf_{w_\tau\in \mathbb{W}_\tau} \|w-w_\tau\|_{L^2(0,T)} \\
&\le c \tau^\alpha\|{}_0\partial_t^\al e\|_{L^2(0,T)}  \|w\|_{\widetilde{H}_R^{\al}(0,T)} \\
& \le c\tau^\alpha \|{_0\partial_t^\alpha}e\|_{L^2(0,T)}\|e\|_{L^2(0,T)}.
\end{aligned}
\end{equation*}
This together with \eqref{eqn:frac-Ritz} yields the desired error estimate.
\end{proof}

Next we introduce a fractionalized $L^2$-projection $P_\tau:L^2(0,T)\to \bVt$, defined by
\begin{equation*}
  (P_\tau v,\phi)_{L^2(0,T)}=(v,\phi)_{L^2(0,T)}\quad \forall\phi\in \mathbb{W}_\tau.
\end{equation*}
Let $b(\cdot,\cdot): \bVt\times \bWt\to \mathbb{R}$ by $b(v,\phi)= (v,\phi)_{L^2(0,T)}$.
For any $v\in \bVt$, choosing $\phi=\Pi_\tau v$ yields
$b(v,\phi)=(v,\Pi_\tau v)_{L^2(0,T)} =\|\Pi_\tau v\|_{L^2(0,T)}^2.$
This and Lemma \ref{lem:pdf2} yield the following inf-sup condition
\begin{equation*}
  \sup_{\phi\in \bWt}\frac{b(v,\phi)}{\|\phi\|_{L^2(0,T)}}\geq c \|v\|_{L^2(0,T)}.
\end{equation*}
Thus the projection operator $P_\tau$ is well defined. Next we study its approximation property.

\begin{lemma}\label{lem:fracL2}
For the fractionalized $L^2$-projection $P_\tau$, there holds
\begin{equation*}
  \begin{aligned}
  \|v-P_\tau v\|_{L^2(0,T)} &\leq c\tau^s\|v\|_{\widetilde H_L^s(0,T)},\quad 0\leq s\leq \alpha+1,\\
  \|v-P_\tau v\|_{\widetilde H_L^\alpha(0,T)} &\leq c\tau^s\|v\|_{\widetilde H_L^{\alpha+s}(0,T)},\quad 0\leq s\leq 1.
  \end{aligned}
\end{equation*}
\end{lemma}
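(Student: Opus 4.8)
The plan is to prove the two estimates for the fractionalized $L^2$-projection $P_\tau$ by reducing them to the already-established error bound for the fractional Ritz projection $R_\tau^\alpha$ (Lemma~\ref{lem:fracRitz}), since both operators map into the same space $\bVt$ and are defined by testing against $\bWt$. First I would establish the $\widetilde H_L^\alpha$-bound (the second inequality). Observe that for any $\phi\in\bWt$ we have
\begin{equation*}
({}_0\partial_t^\alpha P_\tau v,\phi)_{L^2(0,T)} = ({}_0\partial_t^\alpha v, \phi)_{L^2(0,T)}
\end{equation*}
need \emph{not} hold for $P_\tau$; rather, $P_\tau$ matches $v$ in $L^2$ against $\bWt$. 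So the cleaner route is to exploit that $R_\tau^\alpha$ is characterized by exactly the relation $({}_0\partial_t^\alpha R_\tau^\alpha v,\phi)=({}_0\partial_t^\alpha v,\phi)$ for all $\phi\in\bWt$, which controls the $\widetilde H_L^\alpha$-seminorm of the error directly. I would therefore compare $P_\tau v$ with $R_\tau^\alpha v$. The difference $R_\tau^\alpha v - P_\tau v$ lies in $\bVt$, and the $\widetilde H_L^\alpha$-bound for $v-P_\tau v$ follows once I control $\|{}_0\partial_t^\alpha(v-P_\tau v)\|_{L^2(0,T)}$.

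For the second (i.e.\ $\widetilde H_L^\alpha$) estimate, I would use the norm equivalence $\|w\|_{\widetilde H_L^\alpha(0,T)}\sim\|{}_0\partial_t^\alpha w\|_{L^2(0,T)}$ (from the proof of Lemma~\ref{lem:Bs-norm}) to reduce everything to bounding $\|{}_0\partial_t^\alpha(v-P_\tau v)\|_{L^2(0,T)}$. Writing $v-P_\tau v = (v-R_\tau^\alpha v) + (R_\tau^\alpha v - P_\tau v)$, the first piece is controlled by \eqref{eqn:frac-Ritz}, giving $c\tau^s\|v\|_{\widetilde H_L^{\alpha+s}}$ for $0\le s\le 1$. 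For the second piece $g:=R_\tau^\alpha v-P_\tau v\in\bVt$, I would test against $\phi={}_0\partial_t^\alpha g\in\bWt$ and use the defining relations of both projections to show $({}_0\partial_t^\alpha g,\phi)_{L^2(0,T)}=({}_0\partial_t^\alpha(v-P_\tau v),\phi)_{L^2(0,T)}$; then inverse estimates on $\bWt$ together with Lemma~\ref{ptau-stability} let me bound the $\widetilde H_L^\alpha$-seminorm of $g$ by the $L^2$-error $\|v-P_\tau v\|_{L^2}$, closing the loop against the $L^2$-estimate proved next.

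For the first ($L^2$) estimate, I would run a duality (Aubin--Nitsche) argument exactly parallel to the $L^2$-part of the proof of Lemma~\ref{lem:fracRitz}. Let $e=v-P_\tau v$ and take $w\in\widetilde H_R^\alpha(0,T)$ solving the adjoint problem $(\phi,{}_t\partial_T^\alpha w)_{L^2(0,T)}=(\phi,e)_{L^2(0,T)}$, with the a priori bound $\|w\|_{\widetilde H_R^\alpha(0,T)}\le c\|e\|_{L^2(0,T)}$. Using the adjoint identity \eqref{eqn:adjoint} and the Galerkin orthogonality $(e,\phi)_{L^2(0,T)}=0$ for all $\phi\in\bWt$, I would write $\|e\|_{L^2}^2=(e,{}_t\partial_T^\alpha w)=(e,{}_t\partial_T^\alpha(w-w_\tau))$ for any $w_\tau$ in the image of $\Pi_\tau$, then apply Lemma~\ref{move-alpha} to move half the derivative onto $e$, and use the approximation property \eqref{eqn:err-L2-W} of $\Pi_\tau$ to gain the factor $\tau^\alpha$. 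Combined with the seminorm control of $e$ this yields the rate $\tau^s$ for $0\le s\le\alpha+1$; the extended range up to $\alpha+1$ comes from the higher approximation order available when $v\in\widetilde H_L^{\alpha+1}$, handled by an interpolation argument between the endpoints as in Theorem~\ref{thm:regularity}.

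The main obstacle I anticipate is the bookkeeping of the fractional derivatives in the duality step: the adjoint problem naturally involves the right-sided derivative ${}_t\partial_T^\alpha$, so I must invoke Lemma~\ref{move-alpha} (splitting $\alpha$ into two halves) and the reflection/extension trick used in its proof to legitimately transfer derivatives between $e$ and $w$ while staying within $\widetilde H_L^{\alpha/2}$ and $\widetilde H_R^{\alpha/2}$. Ensuring that the constants stay independent of $\tau$ and that the regularity index $s$ ranges correctly (up to $\alpha+1$ for $L^2$ but only up to $1$ for the $\widetilde H_L^\alpha$-norm) requires care, but no genuinely new tool beyond those already assembled in Lemmas~\ref{lem:fracRitz}, \ref{move-alpha}, and \ref{ptau-stability}.
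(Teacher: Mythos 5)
The main problem is in your treatment of the first ($L^2$) estimate. The duality argument you transplant from Lemma~\ref{lem:fracRitz} does not go through for $P_\tau$, because the Galerkin orthogonality is different: for $R_\tau^\alpha$ one has $({}_0\partial_t^\alpha(v-R_\tau^\alpha v),\phi)_{L^2(0,T)}=0$ for all $\phi\in\bWt$, which is exactly what permits the chain $(e,{}_t\partial_T^\alpha w)=({}_0\partial_t^\alpha e,w)=({}_0\partial_t^\alpha e,w-w_\tau)$ and the gain of $\tau^\alpha$ from approximating $w$. For $P_\tau$ the orthogonality is $(e,\phi)_{L^2(0,T)}=0$ for $\phi\in\bWt$, and your step $(e,{}_t\partial_T^\alpha w)=(e,{}_t\partial_T^\alpha(w-w_\tau))$ would require $(e,{}_t\partial_T^\alpha w_\tau)=0$; but ${}_t\partial_T^\alpha w_\tau$ for piecewise constant $w_\tau$ is not an element of $\bWt$ (it is singular at the nodes), so this does not follow from the available orthogonality. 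If you instead use the orthogonality in the only way it permits, you get $\|e\|_{L^2(0,T)}^2=(e,{}_t\partial_T^\alpha w-\Pi_\tau({}_t\partial_T^\alpha w))_{L^2(0,T)}$, and since ${}_t\partial_T^\alpha w=e$ has no extra smoothness, $(I-\Pi_\tau)$ gains nothing: duality produces no factor of $\tau$ here. This is not an accident --- the $L^2$ bound for $P_\tau$ is the primal estimate, not a lower-norm estimate obtainable by Aubin--Nitsche. The paper's route is quasi-optimality: the inf-sup condition for $b(v,\phi)=(v,\phi)_{L^2(0,T)}$ on $\bVt\times\bWt$ (obtained by taking $\phi=\Pi_\tau v$ and invoking Lemma~\ref{ptau-stability}) gives $\|v-P_\tau v\|_{L^2(0,T)}\le c\inf_{v_\tau\in\bVt}\|v-v_\tau\|_{L^2(0,T)}$; inserting $v_\tau=R_\tau^\alpha v$ and Lemma~\ref{lem:fracRitz} covers $\alpha\le s\le\alpha+1$, and interpolation with the $L^2$-stability of $P_\tau$ (the $s=0$ case) covers the remaining range.

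Your plan for the second ($\widetilde H_L^\alpha$) estimate is essentially sound and close to the paper's: decompose through $R_\tau^\alpha v$, control $v-R_\tau^\alpha v$ by \eqref{eqn:frac-Ritz}, and control $g=R_\tau^\alpha v-P_\tau v\in\bVt$ by an inverse estimate against its $L^2$ norm. The cleanest closing step is simply $\|g\|_{\widetilde H_L^\alpha(0,T)}\le c\tau^{-\alpha}\|g\|_{L^2(0,T)}$ (the inverse inequality \eqref{eqn:inversest}) combined with $\|g\|_{L^2(0,T)}\le\|v-R_\tau^\alpha v\|_{L^2(0,T)}+\|v-P_\tau v\|_{L^2(0,T)}\le c\tau^{\alpha+s}\|v\|_{\widetilde H_L^{\alpha+s}(0,T)}$; this is what the paper packages as the $\widetilde H_L^\alpha$-stability of $P_\tau$ in Lemma~\ref{lem:stability}. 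Your variant with $\phi={}_0\partial_t^\alpha g$ and Lemma~\ref{move-alpha} only yields, after the half-derivative transfer and the inverse estimate on $\bWt$, a bound in terms of $\tau^{-\alpha/2}\|v-P_\tau v\|_{H^{\alpha/2}(0,T)}$ rather than the $L^2$-error, so an extra interpolation and absorption step is needed; Lemma~\ref{ptau-stability} is not the relevant tool there. In any case, since the second estimate feeds on the first, the gap identified above must be repaired before the argument closes.
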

\begin{proof}
Clearly, by Lemma \ref{lem:pdf2}, it is stable in $L^2(0,T)$, i.e.,
$\|P_\tau v\|_{L^2(0,T)}\leq c\|v\|_{L^2(0,T)}.$
It directly follows the inf-sup condition that
\begin{equation*}
  \|v-P_\tau v\|_{L^2(0,T)}\leq c\inf_{v_\tau\in \bVt}\|v-v_\tau\|_{L^2(0,T)}.
\end{equation*}
In particular, if $v\in\widetilde H^s_L(0,T)$, with $s\geq\alpha$, we may take $v_\tau=R_{\tau}^\alpha v$ to deduce
\begin{equation*}
  \|v-P_\tau v\|_{L^2(0,T)}\leq c\tau^s\|v\|_{\widetilde H_L^s(0,T)}\quad \alpha\leq s\leq \alpha+1.
\end{equation*}
This estimate, the $L^2$-stability and interpolation yield the first estimate.
Next, by the triangle inequality, we derive the $\widetilde H_L^\alpha$-estimate:
\begin{equation*}
  \begin{aligned}
    \|v-P_\tau v\|_{\widetilde H_L^\alpha(0,T)} & \leq \|v-R_\tau^\alpha v\|_{\widetilde H_L^{\alpha}(0,T)}+\|R_\tau^\alpha v-P_\tau v\|_{\widetilde H_L^{\alpha}(0,T)}\\
       & \leq (1+\|P_\tau\|_{\widetilde H_L^\alpha(0,T)\to \widetilde H_L^\alpha(0,T)})\|v-R_\tau^\alpha v\|_{\widetilde{H}_L^{\alpha}(0,T)} \\
      &    \leq c\tau^s\|v\|_{\widetilde H_L^{\alpha+s}(0,T)},
  \end{aligned}
\end{equation*}
where the last inequality follows by the
$\widetilde H_L^\alpha$-stability of $P_\tau$ from Lemma \ref{lem:stability} below.
\end{proof}

The next result gives the $\widetilde H_L^\alpha$-stability of $P_\tau$, which is needed in the proof of Lemma \ref{lem:fracL2}.
\begin{lemma}\label{lem:stability}
The fractionalized $L^2$-projection $P_\tau$ is stable on $\widetilde H_L^\alpha(0,T)$.
\end{lemma}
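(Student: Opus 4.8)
The plan is to reduce the assertion to the fractional derivative via the scalar norm equivalence $\|v\|_{\widetilde{H}_L^\alpha(0,T)}\sim\|\dal v\|_{L^2(0,T)}$ (the one-dimensional statement underlying Lemma~\ref{lem:Bs-norm}), so that it suffices to show $\|\dal P_\tau v\|_{L^2(0,T)}\le c\|\dal v\|_{L^2(0,T)}$ for all $v\in\widetilde{H}_L^\alpha(0,T)$. The key idea is to compare $P_\tau$ with the fractional Ritz projection $R_\tau^\al$, which is automatically $\widetilde{H}_L^\alpha$-stable: its defining relation says precisely that $\dal R_\tau^\al v$ is the $L^2(0,T)$-projection of $\dal v$ onto $\bWt$, i.e. $\dal R_\tau^\al v=\Pi_\tau\dal v$, whence $\|\dal R_\tau^\al v\|_{L^2(0,T)}=\|\Pi_\tau\dal v\|_{L^2(0,T)}\le\|\dal v\|_{L^2(0,T)}$.

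I would then write $P_\tau v=R_\tau^\al v+\eta$ with $\eta:=(P_\tau-R_\tau^\al)v\in\bVt$, so that by the triangle inequality it only remains to bound $\|\dal\eta\|_{L^2(0,T)}$. First I would control $\eta$ in $L^2$. Since $P_\tau$ is characterized by $\Pi_\tau P_\tau v=\Pi_\tau v$, one has $\Pi_\tau\eta=\Pi_\tau(v-R_\tau^\al v)$; combining the lower bound of Lemma~\ref{ptau-stability} on $\bVt$, the contractivity of $\Pi_\tau$, and the $L^2$-estimate of the fractional Ritz projection (Lemma~\ref{lem:fracRitz} with $s=0$) gives
\begin{equation*}
\|\eta\|_{L^2(0,T)}\le c\|\Pi_\tau\eta\|_{L^2(0,T)}\le c\|v-R_\tau^\al v\|_{L^2(0,T)}\le c\tau^{\alpha}\|v\|_{\widetilde{H}_L^\alpha(0,T)}.
\end{equation*}

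The final and decisive step is a fractional inverse inequality on $\bVt$, namely $\|\dal z\|_{L^2(0,T)}\le c\tau^{-\alpha}\|z\|_{L^2(0,T)}$ for $z\in\bVt$ with $c$ independent of $\tau$; applying it to $\eta$ cancels the factor $\tau^{\alpha}$ and yields $\|\dal\eta\|_{L^2(0,T)}\le c\|v\|_{\widetilde{H}_L^\alpha(0,T)}$, which closes the argument. I expect this inverse inequality to be the main obstacle. I would establish it by splitting $\alpha=\tfrac\alpha2+\tfrac\alpha2$: since $\dal z\in\bWt$, Lemma~\ref{move-alpha} gives $\|\dal z\|_{L^2(0,T)}^2=({_t\partial_T^\frac{\alpha}{2}}(\dal z),\,{_0\partial_t^\frac{\alpha}{2}}z)_{L^2(0,T)}$, and then one bounds $\|{_t\partial_T^\frac{\alpha}{2}}(\dal z)\|_{L^2(0,T)}\le c\tau^{-\alpha/2}\|\dal z\|_{L^2(0,T)}$ by an inverse inequality on $\bWt$ (of the type already invoked in Lemma~\ref{ptau-stability}) and $\|{_0\partial_t^\frac{\alpha}{2}}z\|_{L^2(0,T)}\le c\|z\|_{H^\frac{\alpha}{2}(0,T)}\le c\tau^{-\alpha/2}\|z\|_{L^2(0,T)}$ by \eqref{u-dal} and a lower-order (since $\alpha/2<1/2$) inverse inequality on $\bVt$; multiplying the two halves produces the claimed $\tau^{-\alpha}$ scaling. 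The delicate point is proving these half-order inverse inequalities uniformly in $\tau$ on the fractionalized space $\bVt$, for which the natural tool is a scaling argument on a reference interval exploiting the explicit form $\phi_k(t)=(t-t_{k-1})^{\alpha}\chi_{[t_{k-1},T]}(t)$ of the basis functions.
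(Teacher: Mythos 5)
Your overall architecture coincides with the paper's proof of Lemma \ref{lem:stability}: compare $P_\tau$ with the fractional Ritz projection $R_\tau^\al$ (whose stability is immediate, since indeed $\dal R_\tau^\al v=\Pi_\tau\,\dal v$), bound the difference in $L^2(0,T)$ by $c\tau^{\al}\|v\|_{\widetilde H_L^\al(0,T)}$, and convert this to an $\widetilde H_L^\al$-bound with the inverse estimate $\|\dal z\|_{L^2(0,T)}\le c\tau^{-\al}\|z\|_{L^2(0,T)}$ on $\bVt$, which is equivalent to \eqref{eqn:inversest}. Your bound on $\eta=(P_\tau-R_\tau^\al)v$ via the identity $\Pi_\tau\eta=\Pi_\tau(v-R_\tau^\al v)$, the lower bound of Lemma \ref{ptau-stability} and Lemma \ref{lem:fracRitz} with $s=0$ is a nice variant of the paper's argument (which instead combines the $L^2$-error estimates of Lemmas \ref{lem:fracRitz} and \ref{lem:fracL2} by the triangle inequality); it has the advantage of not invoking Lemma \ref{lem:fracL2} at all.

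The gap is in your proposed proof of the inverse estimate. The factorization through Lemma \ref{move-alpha} is fine, and the half-order inverse inequality on the piecewise-constant space $\bWt$ is standard, but the remaining ingredient $\|z\|_{H^{\al/2}(0,T)}\le c\tau^{-\al/2}\|z\|_{L^2(0,T)}$ for $z\in\bVt$ cannot be obtained by ``a scaling argument on a reference interval'': $\bVt$ is not a local finite element space. Each basis function $(t-t_{k-1})^{\al}\chi_{[t_{k-1},T]}$ is supported on all of $[t_{k-1},T]$, so the restriction of $\bVt$ to the $j$-th subinterval is a $j$-dimensional space with no fixed reference configuration, and the usual equivalence-of-norms-on-a-fixed-finite-dimensional-space argument does not yield a constant uniform in $\tau$. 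The inequality is nevertheless true and is in fact already available in the paper: it is \eqref{hnal2} combined with the contractivity $\|\Pi_\tau z\|_{L^2(0,T)}\le\|z\|_{L^2(0,T)}$, and its proof there proceeds via Lemma \ref{move-alpha}, \eqref{u-dal} and the inverse inequality on $\bWt$ rather than by scaling. The paper itself avoids the two half-order steps altogether: writing $z={_0I_t^{\al}}\phi$ with $\phi\in\bWt$, it uses $\|z\|_{\widetilde H_L^\al(0,T)}\sim\|\phi\|_{L^2(0,T)}$ together with $\|\phi\|_{H^{-\al}(0,T)}\le c\|{_0I_t^{\al}}\phi\|_{L^2(0,T)}$ and the negative-norm inverse inequality \eqref{eq:dahmen} on $\bWt$, for which an elementary duality proof is given in Remark \ref{eq:inv-simple}. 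If you replace the reference-interval scaling by either of these devices, your proof closes.
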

\begin{proof}
First, we show the inverse estimate
\begin{equation}\label{eqn:inversest}
  \|v\|_{\widetilde H_L^\alpha(0,T)}\leq c\tau^{-\alpha}\|v\|_{L^2(0,T)}\quad \forall v\in \bVt.
\end{equation}
For any $v\in \bVt$, there exists $\phi\in \bWt$
such that $v={_0I_t^\alpha}\phi$. Thus it is equivalent to
\begin{equation*}
  \|\phi\|_{L^2(0,T)}\leq c\tau^{-\alpha}\|{_0I_t^\alpha}\phi\|_{L^2(0,T)}\quad \forall \phi\in \bWt.
\end{equation*}
Further, by \eqref{eqn:adjoint} and norm equivalence, we have
\begin{equation*}
\begin{aligned}
    \|\phi\|_{H^{-\alpha}(0,T)} &
    \equiv \sup_{\psi\in \widetilde{H}_L^\alpha(0,T)}\frac{(\phi,\psi)_{L^2(0,T)}}{\|\psi\|_{\widetilde{H}_L^\alpha(0,T)}} \\
   & = \sup_{\psi\in \widetilde{H}_L^\alpha(0,T)}
           \frac{({_0I_t^\alpha}\phi,{_t\partial_T^\alpha}\psi)_{L^2(0,T)}}{\|\psi\|_{\widetilde{H}_L^\alpha(0,T)}}
           \leq c \|{_0I_t^\alpha}\phi\|_{L^2(0,T)}.
\end{aligned}
\end{equation*}
Now the following inverse estimate is known \cite[Theorem 4.6]{Dahmen:2003} (see also Remark \ref{eq:inv-simple} below)
\begin{equation}\label{eq:dahmen}
   \|\phi\|_{L^2(0,T)}\leq c\tau^{-\alpha}\|\phi\|_{H^{-\alpha}(0,T)}\quad \forall \phi\in \bWt,
\end{equation}
which directly yields \eqref{eqn:inversest}. Now it follows from
\eqref{eqn:inversest} that for any $v\in \widetilde H_L^\alpha(0,T)$
\begin{equation*}
  \begin{aligned}
  \|P_\tau v\|_{\widetilde H_L^\alpha(0,T)}
  &\leq \|R_\tau^\alpha v\|_{\widetilde H_L^\alpha(0,T)} + \|R_\tau^\alpha v-P_\tau v\|_{\widetilde H_L^\alpha(0,T)}\\
  &\leq c\|v\|_{\widetilde H_L^\alpha(0,T)} + c\tau^{-\alpha}\|R_\tau^\alpha v-P_\tau v\|_{L^2(0,T)}\leq c\|v\|_{\widetilde H_L^\alpha(0,T)},
  \end{aligned}
\end{equation*}
where the last inequality follows by the $L^2$-estimates for $\Pi^\alpha_\tau$ and $P_\tau$,
cf. Lemmas \ref{lem:fracRitz} and \ref{lem:fracL2}.
\end{proof}

\begin{remark}\label{eq:inv-simple}
The inverse inequality \eqref{eq:dahmen} is a special case
of a general result in \cite{Dahmen:2003}.
In our case, it follows easily from a duality argument. For a given $\phi \in \bWt$,
find $v_\phi \in \widetilde H^1_L(0,T)$ such that $( v_\phi^\prime, \varphi^\prime)_{L^2(0,T)}=(\phi, \varphi)_{L^2(0,T)}$
for all $\varphi \in \widetilde H_L^1(0,T)$. Then $ \| v_\phi^\prime \|_{L^2(0,T)} \le c\| \phi\|_{H^{-1}(0,T)}$
and $\| v_\phi^{\prime \prime} \|_{L^2(0,T)} = \| \phi\|_{L^2(0,T)}$. Since $\phi\in \bWt$,
$ v_\phi^\prime $ is conforming piecewise linear and
the inverse inequality $ \| v_\phi^{\prime \prime} \|_{L^2(0,T)} \le c \tau^{-1} \| v_\phi^\prime \|_{L^2(0,T)}$ holds. Consequently,
$$
\| \phi\|_{L^2(0,T)} \le c \tau^{-1}  \| v_\phi^\prime \|_{L^2(0,T)} \le c  \tau^{-1}  \| \phi \|_{H^{-1}(0,T)},
$$
and by interpolation, we obtain the desired inequality  \eqref{eq:dahmen} for $\alpha\in(0,1)$.
\end{remark}

%%%%%%%%%%%%%%%%%%%%%%%%%%%%%%%%%555
\subsection{Error estimates for fractional ODEs}\label{sec:error-ODE}
%%%%%%%%%%%%%%%%%%%%%%%%%%%%%%%

Next, we derive error estimates for
the Galerkin scheme \eqref{eqn:Galerkin-ode}  for the fractional ODE \eqref{eqn:ode}. We first establish the following result:
\begin{theorem}\label{thm:ODEenergy}
Let $f \in \widetilde{H}_L^{s}(0,T)$. Then the solution $u_\tau \in \bVt$ of problem \eqref{eqn:Galerkin-ode}
satisfies
\begin{equation*}
\|{}_0\partial_t^\al( u-u_\tau)\|_{L^2(0,T)} + \lambda\| u-u_\tau  \|_{L^2(0,T)}  \le c \tau^s
       \| f \|_{\widetilde{H}_L^{s}(0,T)}.
\end{equation*}
\end{theorem}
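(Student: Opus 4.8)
The plan is to run a Céa-type argument built on the discrete inf-sup condition \eqref{ODEinfsup-discrete-imp}, but with the decisive choice of the fractionalized $L^2$-projection $P_\tau$ (rather than the fractional Ritz projection $R_\tau^\alpha$) as the intermediate comparison function, so that the consistency error contains no $\lambda$-term. I would write $u-u_\tau=\rho-e_\tau$ with $\rho:=u-P_\tau u$ and $e_\tau:=u_\tau-P_\tau u\in\bVt$. Since both \eqref{eqn:weak-ode} and \eqref{eqn:Galerkin-ode} hold for every $\phi\in\bWt$, Galerkin orthogonality gives $a_\lambda(u-u_\tau,\phi)=0$, whence
\begin{equation*}
a_\lambda(e_\tau,\phi)=a_\lambda(\rho,\phi)=({}_0\partial_t^\alpha\rho,\phi)_{L^2(0,T)}+\lambda(\rho,\phi)_{L^2(0,T)}\quad\forall\phi\in\bWt.
\end{equation*}
The defining property $(\rho,\phi)_{L^2(0,T)}=(u-P_\tau u,\phi)_{L^2(0,T)}=0$ of $P_\tau$ annihilates the second term, leaving $a_\lambda(e_\tau,\phi)=({}_0\partial_t^\alpha\rho,\phi)_{L^2(0,T)}$. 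This is the crucial simplification: the residual acting on $\bWt$ is purely the fractional-derivative defect and no longer involves $\lambda$.

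Next I would apply the discrete inf-sup condition \eqref{ODEinfsup-discrete-imp} to $v=e_\tau$. Using $a_\lambda(e_\tau,\phi)=({}_0\partial_t^\alpha\rho,\phi)_{L^2(0,T)}\le\|{}_0\partial_t^\alpha\rho\|_{L^2(0,T)}\|\phi\|_{L^2(0,T)}$, this yields
\begin{equation*}
\|e_\tau\|_{\widetilde H_L^\alpha(0,T)}+\lambda\|\Pi_\tau e_\tau\|_{L^2(0,T)}\le c\|{}_0\partial_t^\alpha\rho\|_{L^2(0,T)}.
\end{equation*}
Invoking the norm equivalence $\|{}_0\partial_t^\alpha w\|_{L^2(0,T)}\sim\|w\|_{\widetilde H_L^\alpha(0,T)}$ (recalled in the proof of Lemma \ref{lem:Bs-norm}) together with the $\widetilde H_L^\alpha$-approximation estimate for $P_\tau$ in Lemma \ref{lem:fracL2}, I get $\|{}_0\partial_t^\alpha\rho\|_{L^2(0,T)}\le c\tau^s\|u\|_{\widetilde H_L^{\alpha+s}(0,T)}$, and hence the same bound for $\|e_\tau\|_{\widetilde H_L^\alpha(0,T)}$ and for $\lambda\|\Pi_\tau e_\tau\|_{L^2(0,T)}$.

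Finally I would reassemble the error by the triangle inequality. For the fractional-derivative seminorm, the norm equivalence gives $\|{}_0\partial_t^\alpha(u-u_\tau)\|_{L^2(0,T)}\le\|{}_0\partial_t^\alpha\rho\|_{L^2(0,T)}+\|{}_0\partial_t^\alpha e_\tau\|_{L^2(0,T)}\le c\tau^s\|u\|_{\widetilde H_L^{\alpha+s}(0,T)}$. For the $L^2$-part I split $\lambda\|u-u_\tau\|_{L^2(0,T)}\le\lambda\|\rho\|_{L^2(0,T)}+\lambda\|e_\tau\|_{L^2(0,T)}$: the first summand is controlled through the $L^2$-approximation estimate of Lemma \ref{lem:fracL2} taken at the \emph{lower} regularity index $s$, namely $\lambda\|\rho\|_{L^2(0,T)}\le c\tau^s\lambda\|u\|_{\widetilde H_L^s(0,T)}$; the second uses the nonstandard $L^2$-stability of $\Pi_\tau$ on $\bVt$ from Lemma \ref{ptau-stability}, $\|e_\tau\|_{L^2(0,T)}\le c(\alpha)^{-1/2}\|\Pi_\tau e_\tau\|_{L^2(0,T)}$, so that $\lambda\|e_\tau\|_{L^2(0,T)}\le c\lambda\|\Pi_\tau e_\tau\|_{L^2(0,T)}\le c\tau^s\|u\|_{\widetilde H_L^{\alpha+s}(0,T)}$. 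Adding the contributions and applying the $\lambda$-uniform regularity estimate \eqref{eqn:regularity-ode}, $\|u\|_{\widetilde H_L^{\alpha+s}(0,T)}+\lambda\|u\|_{\widetilde H_L^s(0,T)}\le c\|f\|_{\widetilde H_L^s(0,T)}$, yields the claimed bound.

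The main obstacle is obtaining a constant \emph{independent of} $\lambda$. A naive Céa estimate based on $R_\tau^\alpha$ would bound $\lambda\|u-u_\tau\|_{L^2(0,T)}$ only by $c\lambda\tau^{\alpha+s}\|u\|_{\widetilde H_L^{\alpha+s}(0,T)}$, i.e. through a factor $\lambda\tau^\alpha$, forcing an unwanted CFL-type restriction. The two devices that eliminate this are (i) choosing $P_\tau$, whose orthogonality removes the $\lambda$-term from the residual, so that the consistency error is $\lambda$-free; and (ii) pairing the low-order approximation $\lambda\|\rho\|_{L^2(0,T)}\le c\tau^s\lambda\|u\|_{\widetilde H_L^s(0,T)}$ with $\lambda\|u\|_{\widetilde H_L^s(0,T)}\le c\|f\|_{\widetilde H_L^s(0,T)}$, so that every factor of $\lambda$ is absorbed by the regularity estimate rather than by a power of $\tau$.
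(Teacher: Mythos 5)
Your argument is correct and follows essentially the same route as the paper: a C\'ea-type estimate built on the discrete inf-sup condition \eqref{ODEinfsup-discrete-imp}, with the fractionalized $L^2$-projection $P_\tau u$ (not $R_\tau^\alpha u$) as the comparison function, Lemma \ref{ptau-stability} to pass from $\lambda\|\Pi_\tau e_\tau\|_{L^2(0,T)}$ back to $\lambda\|e_\tau\|_{L^2(0,T)}$, and the $\lambda$-uniform regularity estimate \eqref{eqn:regularity-ode} to absorb the factor of $\lambda$ multiplying $\|u-P_\tau u\|_{L^2(0,T)}$. Your extra observation that the $P_\tau$-orthogonality annihilates the $\lambda(\rho,\phi)_{L^2(0,T)}$ term in the residual is a tidy streamlining, but the paper's quasi-optimality bound by $2\inf_{v\in\bVt}\bigl(\|{}_0\partial_t^\alpha(u-v)\|_{L^2(0,T)}+\lambda\|u-v\|_{L^2(0,T)}\bigr)$ followed by the choice $v=P_\tau u$ reaches the same conclusion with the same ingredients.
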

\begin{proof}
Since $({}_0\partial_t^\alpha  u, u)_{L^2(0,T)}\ge 0$, cf. Lemma \ref{lem:pdf2}, repeating
the arguments in Section \ref{sec:frac-ritz} yields
\begin{equation*}
 \|{}_0\partial_t^\al( u-u_\tau)\|_{L^2(0,T)}  + \lambda\| u-u_\tau  \|_{L^2(0,T)}
 \le 2 \inf_{v \in \bVt}(\|{}_0\partial_t^\al (u-v)\|_{L^2(0,T)} +\lambda \|{u-v}\|_{L^2(0,T)}).
\end{equation*}
Taking $v=P_\tau u$ and appealing to Lemma \ref{lem:fracL2} and
\eqref{eqn:regularity-ode} yield the desired estimate.
\end{proof}

Next we establish an error bound in $L^2$-norm.
\begin{theorem}\label{thm:ODEL2}
Let $f \in \widetilde{H}_L^{s}(0,T)$. Then the solution $u_\tau$ of problem \eqref{eqn:Galerkin-ode}
satisfies
\begin{equation*}
\| u-u_\tau \|_{L^2(0,T)} \le c   \tau^{\alpha+s}     %\min(1,s)}
       \| f\|_ {\widetilde{H}_L^{s}(0,T)}.
\end{equation*}
\end{theorem}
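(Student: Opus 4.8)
The plan is to run an Aubin--Nitsche duality argument, exactly parallel to the $L^2$-estimate in Lemma \ref{lem:fracRitz}, but now carrying the reaction term $\lambda u$ through the adjoint and then feeding in the energy bound of Theorem \ref{thm:ODEenergy} at the very end. Write $e = u - u_\tau$. Subtracting the Galerkin equation \eqref{eqn:Galerkin-ode} from the weak form \eqref{eqn:weak-ode} gives the Galerkin orthogonality $a_\lambda(e,\phi)=0$ for every $\phi \in \mathbb{W}_\tau$, which is the engine of the whole argument.

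Next I would introduce the adjoint solution $w \in \widetilde H_R^\alpha(0,T)$ solving $a_\lambda(\phi,w)=(\phi,e)_{L^2(0,T)}$ for all $\phi\in L^2(0,T)$, i.e. ${_t\partial_T^\alpha} w + \lambda w = e$. By the adjoint analogue of the regularity estimate \eqref{eqn:regularity-ode} recorded in Remark \ref{rem:ODEadjoint}, such a $w$ exists and satisfies $\|w\|_{\widetilde H_R^\alpha(0,T)} \le c\|e\|_{L^2(0,T)}$ with $c$ independent of $\lambda$. Taking $\phi=e$ in the adjoint relation and using the integration-by-parts identity \eqref{eqn:adjoint} to move the fractional derivative back onto $e$ yields
\begin{equation*}
\|e\|_{L^2(0,T)}^2 = (e, {_t\partial_T^\alpha} w + \lambda w)_{L^2(0,T)} = ({_0\partial_t^\alpha} e, w)_{L^2(0,T)} + \lambda(e,w)_{L^2(0,T)} = a_\lambda(e,w).
\end{equation*}

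Now I would invoke Galerkin orthogonality to subtract $\Pi_\tau w \in \mathbb{W}_\tau$, so that $\|e\|_{L^2(0,T)}^2 = a_\lambda(e, w - \Pi_\tau w)$, and estimate the two resulting terms by Cauchy--Schwarz,
\begin{equation*}
\|e\|_{L^2(0,T)}^2 \le \big(\|{_0\partial_t^\alpha} e\|_{L^2(0,T)} + \lambda\|e\|_{L^2(0,T)}\big)\, \|w - \Pi_\tau w\|_{L^2(0,T)}.
\end{equation*}
The approximation estimate \eqref{eqn:err-L2-W} with $s=\alpha$ together with the adjoint regularity bound gives $\|w-\Pi_\tau w\|_{L^2(0,T)} \le c\tau^\alpha \|w\|_{\widetilde H_R^\alpha(0,T)} \le c\tau^\alpha\|e\|_{L^2(0,T)}$. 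Cancelling one factor of $\|e\|_{L^2(0,T)}$ leaves $\|e\|_{L^2(0,T)} \le c\tau^\alpha(\|{_0\partial_t^\alpha} e\|_{L^2(0,T)} + \lambda\|e\|_{L^2(0,T)})$, and finally Theorem \ref{thm:ODEenergy} bounds the bracket by $c\tau^s\|f\|_{\widetilde H_L^s(0,T)}$, producing the claimed $\tau^{\alpha+s}$ rate.

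The argument is almost entirely mechanical once the duality is set up; the only point requiring genuine care is the adjoint problem. I must ensure that the correct (right-sided) fractional derivative appears on $w$, that the identity \eqref{eqn:adjoint} is applied with the requisite smoothness (the zero extension of $e$ lies in the appropriate space), and above all that the $\lambda$-uniformity of the adjoint regularity estimate is available from Remark \ref{rem:ODEadjoint}. That uniformity is what keeps $c$ independent of $\lambda$, and hence, in the later PDE application where $\lambda$ plays the role of a discrete eigenvalue, independent of the eigenvalue index.
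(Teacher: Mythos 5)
Your proposal is correct and follows essentially the same route as the paper: the paper's proof is exactly this Aubin--Nitsche duality argument, with the adjoint problem ${_t\partial_T^\alpha}z+\lambda z=e$, the $\lambda$-uniform adjoint stability bound, Galerkin orthogonality to subtract a $\mathbb{W}_\tau$-approximant of the dual solution, the $O(\tau^\alpha)$ approximation estimate, and finally Theorem \ref{thm:ODEenergy} to bound $\|{_0\partial_t^\alpha}e\|_{L^2(0,T)}+\lambda\|e\|_{L^2(0,T)}$. The only cosmetic difference is that you fix the specific approximant $\Pi_\tau w$ where the paper takes an infimum over $\mathbb{W}_\tau$, which is equivalent.
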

\begin{proof}
We apply a duality argument.
Let $ z \in \widetilde{H}_R^{\al}(0,T) $ be the solution to (with $e=u-u_\tau$)
\begin{equation*}
(\phi, {_t\partial_T^\alpha} z)_{L^2(0,T)}+ \lambda(\phi,  z)_{L^2(0,T)} = (\phi, e)_{L^2(0,T)} \quad \forall \phi \in L^2(0,T).
\end{equation*}
Then by the adjoint stability, cf. Remark \ref{eq:inv-simple},
we have the following \textit{a priori} estimate:
\begin{equation*}
   \|z\|_{\widetilde{H}_R^\al(0,T)} + \lambda \|z\|_{L^2(0,T)}\le c \|e\|_{L^2(0,T)}.
\end{equation*}
By \eqref{eqn:adjoint} and Galerkin orthogonality, we deduce for any $z_\tau\in \mathbb{W}_\tau$
\begin{equation*}
\begin{aligned}
  \|e\|_{L^2(0,T)}^2 = a(e, z - z_\tau)
 &  \le \big( \|{}_0\partial_t^\al e\|_{L^2(0,T)}+\lambda\|e\|_{L^2(0,T)}\big) \inf_{z_\tau\in \mathbb{W}_\tau} \|z-z_\tau\|_{L^2(0,T)} \\
 &  \le c \tau^\alpha \|z\|_{\widetilde{H}_R^{\al}(0,T)} \big( \|{}_0\partial_t^\al e\|_{L^2(0,T)}+\lambda\|e\|_{L^2(0,T)}\big).
 \end{aligned}
 \end{equation*}
Now using the bound on $z$ and Theorem \ref{thm:ODEenergy} completes the proof.
\end{proof}

\begin{remark}
Both $\widetilde H_L^\alpha$- and $L^2$-estimates are independent
of the parameter $\lambda$, concurring with the inf-sup condition \eqref{ODEinfsup-discrete-imp}.
In either norm, the convergence rate is of optimal order.
\end{remark}

%%%%%%%%%%%%%%%%%%%%%%%%%%%%%%%%%%%%%%%
\subsection{Enhanced error estimates for  $f \in H^s(0,T),  \frac{1}{2}<s\leq 1$}\label{sec:improvedODE}
%%%%%%%%%%%%%%%%%%%%%%%%%%%%%%%%%%%%%%%%%%%%%

The trial space $\bVt$ allows improving the error estimates.
First we consider the special case of a source term $f \equiv 1$. Clearly, $f \in {\widetilde H}^{\beta}_L(0,T)$
for any $\beta<\frac12$, and thus $u \in \widetilde H_L^{\alpha+\beta}(0,T)$. Theorem \ref{thm:ODEL2}
gives an $L^2$-error with the rate $O(\tau^{\alpha +\beta})$. By Laplace transform, we derive
$
u(t)= t^\alpha E_{\alpha,\alpha+1}(-\lambda t^\alpha).
$
In the splitting $u=\frac{t^\alpha}{\Gamma(\alpha +1)} + \tilde{u}$, since
$\tilde{u} \in \widetilde{H}_L^{2\alpha+\beta}(0,T)$
and $t^\alpha \in  \bVt$, we obtain
\begin{equation*}
  \inf_{v \in  \mathbb{V}_\tau} \| u - v\|_{\widetilde{H}_L^\alpha(0,T)} = \inf_{v \in  \mathbb{V}_\tau} \| \tilde u - v\|_{\widetilde{H}_L^\alpha(0,T)} \le c \tau^{\min(\alpha +\beta,1)},\quad \forall\beta\in[0,\tfrac12).
\end{equation*}
Then by a duality argument, we have
\begin{equation*}
\| u - u_\tau\|_{L^2(0,T)} \le c \tau^{\alpha+ \min(\alpha +\beta,1)},\quad \forall\beta\in[0,\tfrac12).
\end{equation*}

Generally, for $f \in H^s(0,T)$, $\frac 12 < s \le 1$, one may split $f = f(0) + \tilde f$, with
$\tilde f = f-f(0) \in \widetilde{H}_L^{s}(0,T)$, and accordingly, $u=\hat u + \tilde u$, where
$_0\partial_t^\alpha  \hat u + \lambda \hat u = f(0)$ and
$ _0\partial_t^\alpha  \tilde u + \lambda \tilde u = \tilde f (t),$
with $\hat u(0)=\tilde u(0)=0$. By \eqref{eqn:regularity-ode},
$\tilde u  \in \widetilde{H}_L^{\al+s}(0,T)$ and can be approximated with an $L^2$-error $O(\tau^{\alpha + s})$. Hence we have
\begin{equation*}
\| u - u_\tau \|_{L^2(0,T)} + \tau^\alpha\| u - u_\tau \|_{\widetilde{H}_L^\alpha(0,T)} \leq c \tau^{\alpha+ \min(\alpha + \beta,s)},
\quad\forall\beta\in[0,\tfrac12).
\end{equation*}
This improves the error estimates in Theorems \ref{thm:ODEenergy} and \ref{thm:ODEL2} (for $f(0)\neq0$).

%%%%%%%%%%%%%%%%%%%%%%%%%%%%%%%%%%%%%%%%%%%%	
\section{Error Estimates of the FEM for Fractional PDE}\label{sec:conv-PDE}

Now we derive error estimates for the space-time scheme \eqref{eqn:BV-weak-h}. To this end, we
recall the semidiscrete Galerkin problem for problem \eqref{orig}: for $ t \in (0,T]$ find $u_h(t) \in \bVh $ such that
\begin{equation}\label{eqn:semidisc}
( {}_0 \partial_{t}^\alpha u_h(t),\phi)_{L^2(\Omega)} +(\nabla u_h(t),\nabla\phi)_{L^2(\Omega)}=(f(t), \phi)_{L^2(\Omega)}
                \quad \forall \phi \in \bVh,  %\quad 0 < t \le T,
\end{equation}
with $u_h(0)=0$. Next we recast it into a space semidiscrete space-time formulation by
defining a trial space $\bBalh :=  \widetilde H_L^\alpha( 0,T)\otimes \mathbb{X}_h \subset \Bal(Q_T)$
and test space $ \bVxh :=  L^2(0,T)\otimes \mathbb{X}_h  \subset  V(Q_T)$,
endowed with the associated norms on $\Bal(Q_T)$
and $V(Q_T)$, respectively. Then problem \eqref{eqn:semidisc} is equivalent to: find $u_h\in \bBalh$ such that
$$
a(u_h, \phi) = \langle f, \phi \rangle_{L^2(Q_T)}, \quad \forall  \phi \in \bVxh.
$$

The argument of Lemma \ref{lem:inf-sup} similarly yields an inf-sup condition for the semidiscrete
problem: there holds for some $c$ independent of $h$
\begin{equation}\label{eqn:inf-sup-semi}
	\sup_{\phi \in \bVxh} \frac{a(v,\phi) }{\|\phi \|_V} \ge c\|v \|_{\Bal(Q_T)}\quad \forall v\in \bBalh.
\end{equation}

Using the basis $\{\psi_j\}_{j=1}^N$ (cf. Section \ref{sec:FEM_stability}),
we expand the solutions $u_h$ and $u_{h \tau}$ into
\begin{equation}\label{eqn:expansion}
u_h(t)=\sum_{j=1}^Nu_{j,h}(t)\psi_j\quad \mbox{and} \quad u_{h \tau}(t)=\sum_{j=1}^Nu_{j,h \tau}(t)\psi_j,
\end{equation}
where $u_{j,h}(t)=(u_h(t),\psi_j)_{L^2(\Omega)}$ and $u_{j,h \tau}(t)=(u_{h \tau}(t),\psi_j)_{L^2(\Omega)}$.
Further, the function $u_{j,h}(t)$ satisfies $u_{j,h}(0)=0$ and
\begin{equation*}
_0\partial_t^\alpha u_{j,h} + \lambda_{j,h}u_{j,h} = f_{j,h},\quad 0<t\leq T,
\end{equation*}
where $f_{j,h}(t)=(P_hf(\cdot,t),\psi_j)_{L^2(\Omega)}\in \widetilde{H}_L^s(0,T)$, if
$f\in \widetilde{H}_L^s(0,T;L^2(\Omega))$. Similarly, the function $u_{j,h \tau}\in \bVt$ satisfies
\begin{equation*}
(_0 \partial_{t}^\alpha u_{j,h \tau}+\lambda_{j,h}u_{j,h \tau}, \phi)_{L^2(0,T)}=(f_{j,h},\phi)_{L^2(0,T)},
               \quad \forall\phi\in \mathbb{W}_\tau.
\end{equation*}
In other words, $u_{j,h,\tau}$ is the Petrov-Galerkin approximation
of $u_{j,h}$, and thus Theorems \ref{thm:ODEenergy} and \ref{thm:ODEL2}
give the following error estimates on $e_{j,h}:=u_{j,h}-u_{j,h \tau}$:
\begin{align}
\|{}_0\partial_t^\al e_{j,h}\|_{L^2(0,T)} + \lambda_{j,h}\| e_{j,h}\|_{L^2(0,T)} & \le c \tau^s
\| f_{j,h} \|_{\widetilde{H}_L^{s}(0,T)},\label{eqn:energy-uh}\\
\|e_{j,h}\|_{L^2(0,T)}&\leq c\tau^{\alpha+s}\|f_{j,h}\|_{\widetilde{H}^s_L(0,T)},\label{eqn:est-L2-uh}
\end{align}
where the constant $c$ is independent of $h$.
Next we give an energy estimate for the semidiscrete approximation $u_h$.
\begin{lemma}\label{lem:semi-est-B}
For $f \in L^2 ( Q_T)$, the semidiscrete solution $u_h$  satisfies
\begin{equation*}
	\|{u_h-u}\|_{\Bal(Q_T)}\le c h\|f\|_{L^2 (Q_T)}.
\end{equation*}
\end{lemma}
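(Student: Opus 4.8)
The plan is to combine the semidiscrete inf-sup condition \eqref{eqn:inf-sup-semi} with a spatial best-approximation estimate, in the spirit of a quasi-optimality (Cé­a-type) argument for Petrov--Galerkin schemes. First I would record the Galerkin orthogonality: since $u$ solves \eqref{eqn:BV-weak} against all $\phi\in V$ and $u_h$ solves the semidiscrete problem against all $\phi\in\bVxh\subset V$, there holds $a(u-u_h,\phi)=0$ for every $\phi\in\bVxh$. Fixing any $v_h\in\bBalh$, applying \eqref{eqn:inf-sup-semi} to $u_h-v_h\in\bBalh$, and then inserting the orthogonality together with the continuity bound $|a(w,\phi)|\le\|w\|_{\Bal(Q_T)}\|\phi\|_V$, gives
\begin{equation*}
c\,\|u_h-v_h\|_{\Bal(Q_T)}\le \sup_{\phi\in\bVxh}\frac{a(u_h-v_h,\phi)}{\|\phi\|_V}
=\sup_{\phi\in\bVxh}\frac{a(u-v_h,\phi)}{\|\phi\|_V}\le\|u-v_h\|_{\Bal(Q_T)}.
\end{equation*}
A triangle inequality then yields the quasi-optimality bound $\|u-u_h\|_{\Bal(Q_T)}\le(1+c^{-1})\inf_{v_h\in\bBalh}\|u-v_h\|_{\Bal(Q_T)}$, reducing the lemma to estimating the spatial best-approximation error in the $\Bal(Q_T)$-norm.

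For the best approximation I would take $v_h=P_h u$, the spatial $L^2$-projection \eqref{L2-P} applied at each fixed time $t$. Since $P_h$ is a bounded, time-independent linear operator into $\mathbb{X}_h$, it preserves the temporal fractional regularity of $u$ (so that $P_hu\in\bBalh$) and, crucially, commutes with the Riemann--Liouville time derivative, $\dal(u-P_hu)=(I-P_h)\dal u$. Recalling $\|v\|_{\Bal(Q_T)}^2=\|\dal v\|_{V^*}^2+D(v,v)$, I would bound the two pieces separately. For the first, the commutation and the negative-norm estimate \eqref{neg-norm-Ph} applied pointwise in $t$ give
\begin{equation*}
\|\dal(u-P_hu)\|_{V^*}^2=\int_0^T\|(I-P_h)\dal u(t)\|_{H^{-1}(\Omega)}^2\,dt
\le c h^2\int_0^T\|\dal u(t)\|_{L^2(\Omega)}^2\,dt = c h^2\|\dal u\|_{L^2(Q_T)}^2.
\end{equation*}
For the second piece, \eqref{eqn:err-Ph} with $q=2$ yields $\|(I-P_h)u(t)\|_{H^1(\Omega)}\le ch\|u(t)\|_{H^2(\Omega)}$, whence $D(u-P_hu,u-P_hu)\le c h^2\|u\|_{L^2(0,T;H^2(\Omega))}^2$.

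Finally I would invoke the regularity pickup of Theorem \ref{thm:regularity} with $s=0$, which for $f\in L^2(Q_T)$ supplies $\|\dal u\|_{L^2(Q_T)}+\|u\|_{L^2(0,T;H^2(\Omega))}\le c\|f\|_{L^2(Q_T)}$. Combining the two displayed bounds with this estimate and the quasi-optimality inequality produces $\|u-u_h\|_{\Bal(Q_T)}\le ch\|f\|_{L^2(Q_T)}$. The only genuinely delicate point is the handling of the fractional-derivative term: one must exploit that the spatial projection commutes with $\dal$ and measure its error in the weaker $H^{-1}(\Omega)$ norm, so that the first-order spatial rate (rather than a mere $O(1)$ bound on $\dal u\in L^2$) is recovered; everything else is routine once Theorem \ref{thm:regularity} provides the $\widetilde H_L^\alpha(0,T;L^2(\Omega))\cap L^2(0,T;H^2(\Omega))$ regularity.
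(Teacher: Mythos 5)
Your proposal is correct, and it reaches the same best-approximation estimate for $u-P_hu$ that the paper uses (negative-norm bound \eqref{neg-norm-Ph} for the time-derivative part measured in $V^*$, the $H^1$-estimate \eqref{eqn:err-Ph} for the Dirichlet part, and Theorem \ref{thm:regularity} with $s=0$), but it handles the remaining piece $u_h-P_hu$ by a different mechanism. You invoke the abstract Babu\v{s}ka quasi-optimality argument: Galerkin orthogonality over $\bVxh\subset V$, the semidiscrete inf-sup condition \eqref{eqn:inf-sup-semi}, and the continuity of $a(\cdot,\cdot)$ on $\Bal(Q_T)\times V$ to get $\|u-u_h\|_{\Bal(Q_T)}\le c\,\inf_{v_h\in\bBalh}\|u-v_h\|_{\Bal(Q_T)}$. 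The paper instead writes down the error equation satisfied by $\vartheta=u_h-P_hu$, namely ${_0\partial_t^\alpha}\vartheta-\Delta_h\vartheta=\Delta_h(P_hu-R_hu)$ with the Ritz projection $R_h$, and applies the inf-sup condition as a stability estimate for that perturbed problem, bounding the right-hand side in $L^2(0,T;H^{-1}(\Omega))$. Your route is cleaner in that it avoids the Ritz projection and the identity from \cite{JinLazarovZhou:2013} entirely, at the cost of needing the continuity bound of $a$ on the continuous pair of spaces (which the paper has already established) and of checking that $P_hu$ indeed lies in $\bBalh$ with the requisite temporal regularity --- a point you correctly address via the time-independence of $P_h$ and its commutation with ${_0\partial_t^\alpha}$. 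Both arguments are sound and yield the same first-order rate.
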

\begin{proof}
By \eqref{neg-norm-Ph} and Theorem \ref{thm:regularity}, we have for $\varrho= P_h u -u$
\begin{equation*}
	\|\varrho\|_{B_L^\alpha(Q_T)}
\le c h( \| u\|_{\widetilde{H}^{\al}_L(0,T;L^2(\Omega))}
     + \| u\|_{L^2(0,T; H^2(\Omega))}) \le c h \|f\|_{L^2(Q_T)}.
\end{equation*}
The function $\vartheta := u_h-P_h u$ satisfies $\vartheta(0)=0$ and $
{}_0\partial_t^\alpha\vartheta-\Delta_h\vartheta=\Delta_h(P_hu-R_hu)=\Delta_h R_h \varrho$
(with $R_h$ being the Ritz projection) \cite[equation (3.22)]{JinLazarovZhou:2013}.
By \eqref{eqn:inf-sup-semi} and \eqref{hm1hm1h}, we have
	\begin{equation*}
	\|\vartheta\|_{B^\al_L(Q_T)} \le c \| \Delta_h  R_h \varrho\|_{L^2(0,T;H^{-1}(\Omega))}
	\le c \| R_h \varrho\|_{L^2(0,T;H^1(\Omega))}\le c h\|f\|_{L^2(Q_T)}.
	\end{equation*}
These two estimates and the triangle inequality complete the proof.
\end{proof}

Then we can derive an energy norm estimate for the scheme \eqref{eqn:BV-weak-h}.
\begin{theorem}\label{thm:err-energy}
Let $f \in \widetilde H_L^{s}(0,T;L^2(\Omega ))$ with $0 \le s \le 1$, and $u$ and
$u_{h \tau}$ be the solutions of \eqref{eqn:BV-weak} and
\eqref{eqn:BV-weak-h}, respectively. Then there holds
\begin{equation*}
	\| u-u_{h \tau}\|_{\Bal(Q_T)}
	\le c ( \tau^s+ h)\|f\|_{\widetilde{H}_L^s( {0,T;L^2 (\Omega )})}.
\end{equation*}
\end{theorem}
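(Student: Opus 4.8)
The plan is to split the error by the triangle inequality as
\[
\|u - u_{h\tau}\|_{\Bal(Q_T)} \le \|u - u_h\|_{\Bal(Q_T)} + \|u_h - u_{h\tau}\|_{\Bal(Q_T)},
\]
and to bound the two contributions separately. The first term is the space semidiscretization error, which is already controlled by Lemma~\ref{lem:semi-est-B}: since $\|f\|_{L^2(Q_T)} \le \|f\|_{\widetilde H_L^s(0,T;L^2(\Omega))}$ for $s \ge 0$, we immediately obtain $\|u - u_h\|_{\Bal(Q_T)} \le c\,h\,\|f\|_{\widetilde H_L^s(0,T;L^2(\Omega))}$, which supplies the $h$-part of the claimed bound. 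It therefore remains to prove the temporal estimate $\|u_h - u_{h\tau}\|_{\Bal(Q_T)} \le c\,\tau^s\,\|f\|_{\widetilde H_L^s(0,T;L^2(\Omega))}$.

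For the temporal error I would diagonalize in the discrete Laplacian, writing $e_h := u_h - u_{h\tau} = \sum_{j=1}^N e_{j,h}(t)\psi_j$ with $e_{j,h} = u_{j,h} - u_{j,h\tau}$ as in \eqref{eqn:expansion}. Using the expansions of $D(\cdot,\cdot)$ and of the $V^*$-norm recorded in Section~\ref{sec:FEM_stability}, together with the equivalence \eqref{hm1hm1h} that lets me replace the $H^{-1}(\Omega)$-norm by the mesh-dependent $H^{-1}_h(\Omega)$-norm, I arrive at
\[
\|e_h\|_{\Bal(Q_T)}^2 \le c\sum_{j=1}^N \Big(\lambda_{j,h}^{-1}\|{\,_0\partial_t^\al} e_{j,h}\|_{L^2(0,T)}^2 + \lambda_{j,h}\|e_{j,h}\|_{L^2(0,T)}^2\Big).
\]
Here each $e_{j,h}$ is exactly the Petrov--Galerkin error for the scalar fractional ODE with parameter $\lambda_{j,h}$ and data $f_{j,h}$, so the $\lambda$-uniform energy estimate \eqref{eqn:energy-uh} applies: it gives $\|{\,_0\partial_t^\al} e_{j,h}\|_{L^2(0,T)} \le c\tau^s\|f_{j,h}\|_{\widetilde H_L^s(0,T)}$ and $\lambda_{j,h}\|e_{j,h}\|_{L^2(0,T)} \le c\tau^s\|f_{j,h}\|_{\widetilde H_L^s(0,T)}$. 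Substituting these bounds turns both summands into $c\tau^{2s}\lambda_{j,h}^{-1}\|f_{j,h}\|_{\widetilde H_L^s(0,T)}^2$.

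Finally I would sum over $j$. Since the eigenvalues are bounded below uniformly in $h$ by the Poincar\'e inequality, $\lambda_{j,h} \ge \lambda_{1,h} \ge c > 0$, the weight $\lambda_{j,h}^{-1}$ is harmless, and with $f_{j,h}(t) = (P_hf(\cdot,t),\psi_j)_{L^2(\Omega)}=(f(\cdot,t),\psi_j)_{L^2(\Omega)}$ the orthonormality of $\{\psi_j\}$, applied frequency-by-frequency in time, yields $\sum_j \|f_{j,h}\|_{\widetilde H_L^s(0,T)}^2 = \|P_h f\|_{\widetilde H_L^s(0,T;L^2(\Omega))}^2 \le \|f\|_{\widetilde H_L^s(0,T;L^2(\Omega))}^2$. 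This gives $\|e_h\|_{\Bal(Q_T)} \le c\tau^s\|f\|_{\widetilde H_L^s(0,T;L^2(\Omega))}$, and combining with the semidiscrete bound completes the proof. The main obstacle I anticipate is not any single inequality but the careful bookkeeping of the powers of $\lambda_{j,h}$: one must check that the $H^{-1}$-weighted part of the $\Bal$-norm and the Dirichlet ($H^1$-weighted) part both collapse, after \eqref{eqn:energy-uh}, to the \emph{same} weight $\lambda_{j,h}^{-1}$, so that the sum closes against a single copy of $\|f\|_{\widetilde H_L^s(0,T;L^2(\Omega))}$ with a constant independent of $h$ and $\tau$; it is precisely the passage from $H^{-1}(\Omega)$ to $H^{-1}_h(\Omega)$ via \eqref{hm1hm1h} that makes the diagonalization legitimate.
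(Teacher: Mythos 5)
Your proposal is correct and follows essentially the same route as the paper: triangle inequality with the semidiscrete solution $u_h$, Lemma~\ref{lem:semi-est-B} for the spatial part, and diagonalization in the discrete eigenbasis combined with the $\lambda$-uniform ODE estimate \eqref{eqn:energy-uh} for the temporal part. The only cosmetic difference is at the last step, where the paper keeps the weight $\lambda_{j,h}^{-1}$ to land on $\|f\|_{\widetilde H_L^s(0,T;H^{-1}(\Omega))}$ before relaxing to the $L^2(\Omega)$-norm, while you discard the weight immediately via the uniform lower bound on $\lambda_{1,h}$ — both are valid and yield the stated bound.
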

\begin{proof}
By the expansion \eqref{eqn:expansion} and the estimate \eqref{eqn:energy-uh}, we bound the error $e_h:=u_h-u_{h \tau}$ by
\begin{equation*}
  \begin{aligned}
\|e_h\|_{\Bal(Q_T)}^2
  & =\sum^N_{j=1}\lambda_{j,h}^{-1}\|{}_0\partial_t^\alpha e_{j,h}\|_{L^2(0,T)}^2+\sum_{j=1}^N\lambda_{j,h}\|e_{j,h}\|_{L^2(0,T)}^2\\
  &\le c \tau^{2s} \sum^N_{j=1}\lambda_{j,h}^{-1}  \| f_{j,h} \|^2_{\widetilde{H}_L^{s}(0,T)}
    \leq c \tau^{2s} \|f \|^2_{\widetilde{H}_L^{s}(0,T;H^{-1}(\Omega))}.
  \end{aligned}
\end{equation*}
This, Lemma \ref{lem:semi-est-B} and the triangle inequality give the desired assertion.
\end{proof}

Finally,  we present the $L^2(Q_T)$ error estimate.
\begin{theorem}\label{thm:err-L2}
	For $f \in \widetilde H_L^{s} \left( {0,T;L^2(\Omega)} \right)$, let $u$ and $u_{h \tau}$
	be the solutions of \eqref{eqn:BV-weak}
	and \eqref{eqn:BV-weak-h}, respectively. Then there holds
	\begin{equation*}
	\| u-u_{h \tau} \|_{{L^2}({Q_T})}\le c ({\tau ^{\alpha+s}} +  h^2 )\| f\|_{\widetilde{H} _L^s(0,T;L^2(\Omega))}.
	\end{equation*}
\end{theorem}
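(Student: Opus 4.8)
The plan is to introduce the spatial semidiscrete solution $u_h$ and split the error by the triangle inequality,
\[
\|u-u_{h\tau}\|_{L^2(Q_T)}\le \|u-u_h\|_{L^2(Q_T)}+\|u_h-u_{h\tau}\|_{L^2(Q_T)},
\]
so that the two terms isolate the spatial and temporal discretization errors, respectively. The temporal term $\|u_h-u_{h\tau}\|_{L^2(Q_T)}$ will be controlled by the fractional-ODE estimate \eqref{eqn:est-L2-uh}, while the spatial term $\|u-u_h\|_{L^2(Q_T)}$ will be handled by an Aubin--Nitsche duality argument built on the energy bound of Lemma~\ref{lem:semi-est-B}.

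For the temporal term I would use the eigenfunction expansion \eqref{eqn:expansion}. Since $\{\psi_j\}_{j=1}^N$ is $L^2(\Omega)$-orthonormal, $\|u_h-u_{h\tau}\|_{L^2(Q_T)}^2=\sum_{j=1}^N\|e_{j,h}\|_{L^2(0,T)}^2$, and \eqref{eqn:est-L2-uh} bounds each summand by $c\tau^{\alpha+s}\|f_{j,h}\|_{\widetilde H_L^s(0,T)}$. Summing over $j$ gives $\sum_{j=1}^N\|f_{j,h}\|_{\widetilde H_L^s(0,T)}^2=\|P_hf\|_{\widetilde H_L^s(0,T;L^2(\Omega))}^2$, and since $P_h$ acts only in space it commutes with ${}_0\partial_t^s$ and, being an $L^2(\Omega)$-orthogonal projection (cf. \eqref{L2-P}), this is at most $\|f\|_{\widetilde H_L^s(0,T;L^2(\Omega))}^2$. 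Hence $\|u_h-u_{h\tau}\|_{L^2(Q_T)}\le c\tau^{\alpha+s}\|f\|_{\widetilde H_L^s(0,T;L^2(\Omega))}$, which supplies the $\tau^{\alpha+s}$ contribution.

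For the spatial term it suffices to prove $\|u-u_h\|_{L^2(Q_T)}\le ch^2\|f\|_{L^2(Q_T)}$, since $\|f\|_{L^2(Q_T)}\le\|f\|_{\widetilde H_L^s(0,T;L^2(\Omega))}$ for $s\ge0$; note that, consistent with the statement, the spatial error is driven only by the $L^2$ norm of $f$. I would obtain it by duality: let $w$ solve the backward adjoint fractional problem ${}_t\partial_T^\alpha w-\Delta w=u-u_h$ with $w(T)=0$, so that by the adjoint analogue of Theorem~\ref{thm:regularity} (cf. Remark~\ref{rem:ODEadjoint}) one has $w\in\widetilde H_R^\alpha(0,T;L^2(\Omega))\cap L^2(0,T;H^2(\Omega)\cap H_0^1(\Omega))$ with $\|w\|_{L^2(0,T;H^2(\Omega))}\le c\|u-u_h\|_{L^2(Q_T)}$. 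Applying the integration-by-parts identity \eqref{eqn:adjoint} in time together with Green's formula in space yields $\|u-u_h\|_{L^2(Q_T)}^2=a(u-u_h,w)$; the Galerkin orthogonality $a(u-u_h,\phi)=0$ for all $\phi\in\bVxh$ then lets me replace $w$ by $w-P_hw\in\bVxh$, and continuity of $a(\cdot,\cdot)$ gives $\|u-u_h\|_{L^2(Q_T)}^2\le\|u-u_h\|_{\Bal(Q_T)}\,\|w-P_hw\|_V$. Lemma~\ref{lem:semi-est-B} bounds the first factor by $ch\|f\|_{L^2(Q_T)}$, while \eqref{eqn:err-Ph} with $q=2$ gives $\|w-P_hw\|_V\le ch\|w\|_{L^2(0,T;H^2(\Omega))}\le ch\|u-u_h\|_{L^2(Q_T)}$; cancelling one factor of $\|u-u_h\|_{L^2(Q_T)}$ produces the $h^2$ rate, and adding the two contributions completes the proof. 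The main obstacle here is the spatial duality step: one must justify the $H^2$-regularity and stability of the backward adjoint solution $w$ and verify that the fractional integration-by-parts \eqref{eqn:adjoint} and the spatial Green's formula combine to give exactly $\|u-u_h\|_{L^2(Q_T)}^2=a(u-u_h,w)$, which is precisely what upgrades the $O(h)$ energy estimate to the $O(h^2)$ bound in $L^2(Q_T)$.
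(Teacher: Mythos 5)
Your proposal is correct, and its overall architecture coincides with the paper's proof: the same splitting $u-u_{h\tau}=(u-u_h)+(u_h-u_{h\tau})$, and the same treatment of the temporal part via the eigenfunction expansion \eqref{eqn:expansion}, the fractional-ODE bound \eqref{eqn:est-L2-uh}, Parseval in the $\psi_j$ basis, and the $L^2(\Omega)$-stability of $P_h$. The only real divergence is the spatial term: the paper simply cites \cite[Theorem 3.4]{JinLazarovPasciakZhou:2015} for $\|u-u_h\|_{L^2(Q_T)}\le ch^2\|f\|_{L^2(Q_T)}$, whereas you supply a self-contained Aubin--Nitsche argument built on Lemma \ref{lem:semi-est-B}. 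Your duality step is sound: the identity $\|u-u_h\|_{L^2(Q_T)}^2=a(u-u_h,w)$ follows from \eqref{eqn:adjoint} together with Green's formula once $w\in L^2(0,T;H^2(\Omega)\cap H_0^1(\Omega))$ is known, Galerkin orthogonality over $\bVxh$ lets you insert $P_hw$, and combining the $O(h)$ energy bound with $\|w-P_hw\|_V\le ch\|w\|_{L^2(0,T;H^2(\Omega))}$ from \eqref{eqn:err-Ph} yields the $h^2$ rate. The one point you should make explicit is the regularity and stability of the backward adjoint solution $w$: this is not literally covered by Remark \ref{rem:ODEadjoint} (which concerns the scalar ODE), but it follows from Theorem \ref{thm:regularity} with $s=0$ after the change of variable $t\mapsto T-t$, which turns the right-sided derivative into a left-sided one. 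With that justification added, your argument is a legitimate, more self-contained alternative to the paper's citation; the price is carrying the mild technicalities of the vector-valued integration by parts \eqref{eqn:adjoint}, while the benefit is that the $O(h^2)$ spatial rate is proved rather than imported.
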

\begin{proof}
By \eqref{eqn:est-L2-uh} and the $L^2(\Omega)$-stability of $P_h$, we can bound the error $e_h:=u_h-u_{h \tau}$ by
\begin{equation*}
\begin{aligned}
	\|e_h\|_{L^2(Q_T)}^2=\sum^N_{j=1}\|e_{j,h}\|_{L^2(0,T)}^2
&	\leq c\tau^{2(\alpha+s)}\sum_{j=1}^N\|f_{j,h}\|_{\widetilde{H}_L^s(0,T)}^2 \\
&     \leq c\tau^{2(\alpha+s)}\|f\|_{\widetilde{H}_L^s(0,T;L^2(\Omega))}^2,
\end{aligned}
\end{equation*}
By \cite[Theorem 3.4]{JinLazarovPasciakZhou:2015}, there holds
$\|{u_h-u}\|_{L^2(Q_T)}\le c h^2\|f\|_{L^2 (Q_T)},
$ which completes the proof.
\end{proof}

\begin{remark}\label{rem:Caputo}
In practice, the Caputo fractional derivative is preferred,
since it allows specifying initial conditions as usual, cf. \cite[pp. 353--358]{KilbasSrivastavaTrujillo:2006}.
Thorough discussion about the choice of initial conditions and their mathematical correctness one can find in
\cite{li2016convolution}.
The approach can be extended to the case of smooth initial data:
\begin{align*}
\left\{
\begin{aligned}
&\partial_t^\alpha u-\Delta u=f  &&\mbox{in}\,\,\,\Omega\times(0,T),\\
&u=0 &&\mbox{on}\,\,\partial\Omega\times(0,T),\\
&u(0)=u_0 &&\mbox{in}\,\, \Omega,
\end{aligned}
\right.
\end{align*}
where $u_0\in H_0^1(\Omega)\cap H^2(\Omega)$  and
$\partial_t^\alpha u :={\,_0I^{1-\alpha}_t}u^\prime$
denotes the Caputo derivative of order  $\alpha\in(0,1)$. The function $w:=u-u_0$ satisfies \eqref{orig}
with a source $ F = f + \Delta u_0$, for which our approach applies.
Further, the extension to general elliptic operators and boundary conditions is
direct.
\end{remark}

%%%%%%%%%%%%%%%%%%%%%%%%%%%%%%%%%%%%
\section{Numerical examples}\label{sec:numerics}
%%%%%%%%%%%%%%%%%%%%%%%%%%%%%%%%%%
Now we numerically illustrate our theoretical findings.
Since the semidiscrete problem has been verified \cite{JinLazarovPasciakZhou:2015},
we focus on the temporal discretization error below. In all tables the computed rates are given in the
last column, whereas the numbers in brackets denote the theoretical rates.

\subsection{Fractional ODEs}
First we examine the convergence of the method for fractional ODEs. We
consider the initial value problem
\begin{equation*}
{_0\partial_t^\alpha} u(t) + u(t) = e^t\quad \text{in }(0,T), \quad \text{with} ~~ u(0)=0.
\end{equation*}
The source term $f(t)= e^t $ belongs to the space $H^1(0,T)$, and also in $ \widetilde H_L^s(0,T)$ for $s < \frac12$.
Thus, Theorems \ref{thm:ODEenergy} and \ref{thm:ODEL2} give a convergence rate $O(\tau^s)$ in the $H^\alpha(0,T)$-norm,
and $O(\tau^{\alpha+s})$ in the $L^2(0,T)$-norm, respectively.
By the discussion in Section \ref{sec:improvedODE}, we have an improved convergence rate
$O(\tau^{\min( 1, \alpha+s)}) $ in the $\widetilde{H}^\alpha_L(0,T)$-norm and $O(\tau^{\alpha +\min( 1,
\alpha+s)}) $ in the $L^2(0,T)$-norm, $s\in(0,\frac12)$.
These improved rates are numerically confirmed
by Table \ref{tab:ODE}, where the reference solution is computed on a much finer mesh with a time step size $\tau=1/2000$.

\begin{table}[hbt!]
\caption{The  errors $\|u -u_\tau \|_{L^2(0,T)}/\|u\|_{L^2(0,T)}$ and $\|u - u_\tau\|_{H^\alpha(0,T)}/\|u\|_{L^2(0,T)}$
for the fractional ODE with $\alpha=0.3$, $0.5$, $0.7$ and $0.9$.}\label{tab:ODE}
	\centering
	\begin{tabular}{|c|c|cccccc|c|}
		\hline
		$\alpha$ & $ K$ &$10$ &$20$ &$40$ & $80$ & $160$ & $320$ &rate \\
		\hline
		0.3 & $L^2$      & 8.49e-3 & 3.96e-3 & 1.92e-3 & 9.57e-4 & 4.68e-4 & 2.36e-4 & 1.03 (1.10)\\
            & $H^\alpha$ & 3.15e-2 & 1.78e-2 & 1.04e-2 & 6.18e-3 & 3.75e-3 & 2.33e-3 & 0.75 (0.80)\\
        \hline
		0.5 & $L^2$      & 3.88e-3 & 1.51e-3 & 5.89e-4 & 2.29e-4 & 8.74e-5 & 3.37e-5 & 1.36 (1.50)\\
            & $H^\alpha$ & 3.20e-2 & 1.74e-2 & 9.48e-3 & 5.12e-3 & 2.78e-3 & 1.54e-3 & 0.87 (1.00)\\
        \hline
		0.7 & $L^2$      & 1.66e-3 & 5.15e-4 & 1.59e-4 & 4.94e-5 & 1.52e-5 & 4.73e-6 & 1.69 (1.70)\\
            & $H^\alpha$ & 2.98e-2 & 1.53e-2 & 7.81e-3 & 3.96e-3 & 2.01e-3 & 1.04e-3 & 0.96 (1.00)\\
        \hline
		0.9 & $L^2$      & 8.51e-4 & 2.21e-4 & 5.74e-5 & 1.49e-5 & 3.91e-6 & 1.03e-6 & 1.93 (1.90)\\
            & $H^\alpha$ & 2.84e-2 & 1.42e-2 & 7.12e-3 & 3.56e-3 & 1.78e-3 & 9.10e-4 & 0.99 (1.00)\\
		\hline
   \end{tabular}
\end{table}

\subsection{1-D fractional PDEs}
Now we consider examples on the unit interval $\Omega=(0,1)$ with $T=1$, and
perform numerical tests on the following four sets of problem data:
\begin{enumerate}
	\item[(a)] $f(x,t)= x(1-x)(e^t-1)$ is very smooth in time, with $e^t-1\in \widetilde{H}_L^1(0,T)$.
	\item[(b)] $f(x,t)= x(1-x) e^t$ is mildly smooth in time, with $e^t\in \widetilde{H}_L^s(0,T) \cap H^1(0,T)$, $s<1/2$.
    \item[(c)] $f(x,t)= t^{-0.3}x(1-x)$ is  nonsmooth in time, with $t^{-0.3}\in \widetilde{H}_L^s(0,T)$, $s<0.2$.
    \item[(d)] $f(x,t)= t^{-0.3}$ is nonsmooth in time, with $t^{-0.3}\in \widetilde{H}_L^s(0,T)$, $s<0.2$.
\end{enumerate}

In cases (a)--(c), the source $f$ is compatible with the zero initial data, but not case (d).
In our computation, we fix the spatial mesh size $h$ at $h=1/2000$. The reference solutions are
computed on a finer temporal mesh with a time step size $\tau=1/2000$.  The numerical results are given in Table \ref{tab:exp1D}.
The empirical $L^2(Q_T)$ convergence rate agrees well with the theoretical one $O(\tau^{s+\alpha})$,
cf. Theorem \ref{thm:err-L2}. The (temporal) convergence improves steadily with the temporal regularity of the source  $f$
and for a fixed $f$, with the fractional order $\alpha$, reflecting the improved temporal solution regularity.
It is also worth noting that the spatial regularity of the source $f$ does not influence the
temporal convergence, which concurs with Theorem \ref{thm:err-L2}. Further, for case (b) with large
fractional order $\alpha$, e.g., $\alpha=0.7$ or $0.9$, we observe an empirical convergence rate higher than the theoretical
one $O(\tau^{s+\alpha})$. This phenomenon is analogous to that for fractional ODEs in Section \ref{sec:improvedODE},
due to the special construction of the trial space $\bBal$, and might be analyzed as in the ODE case, which, however, is beyond the scope of this work.

In Table \ref{tab:exp2-nodes}, we present the $L^2(\Omega)$-error at the final time $T$ for examples (c) and (d), by viewing
the space-time method \eqref{eqn:BV-weak-h} as a time-stepping scheme. Numerically one observes an $O(\tau^{\alpha+1})$ rate,
irrespective of the spatial regularity of the source $f$. The precise mechanism for the
high convergence rate in the case of nonsmooth data is to be studied.

\begin{table}[hbt!]
\centering
\caption{The relative error $\|u-u_{h \tau}\|_{L^2(Q_T)}/\|u\|_{L^2(Q_T)}$  for examples (a)--(d)
with $\alpha=0.3$, $0.5$, $0.7$, $0.9$, and $h=1/2000$.}\label{tab:exp1D}
	\begin{tabular}{|c|c|cccccc|c|}
		\hline
	case & $\alpha\backslash K $  &$10$ &$20$ &$40$ & $80$ & $160$ &$320$ &rate \\
		\hline
		\multirow{3}{*}{(a)}& $0.3$  & 1.85e-2 & 7.50e-3 & 3.07e-3 & 1.27e-3 & 5.17e-4 & 2.12e-4 & 1.28 (1.30) \\
		                    & $0.5$  & 8.95e-3 & 3.16e-3 & 1.12e-3 & 4.03e-4 & 1.42e-4 & 5.05e-5 & 1.49 (1.50) \\
		                    & $0.7$  & 4.71e-3 & 1.45e-3 & 4.44e-4 & 1.36e-4 & 4.12e-5 & 1.26e-5 & 1.70 (1.70) \\
		                    & $0.9$  & 2.84e-3 & 7.60e-4 & 2.00e-4 & 5.27e-5 & 1.38e-5 & 3.65e-6 & 1.92 (1.90) \\
		\cline{1-9}
		\multirow{3}{*}{(b)}&$0.3$   & 2.66e-2 & 1.76e-2 & 1.14e-2 & 7.33e-3 & 4.48e-3 & 2.77e-3 & 0.65 (0.80) \\
		                    &$0.5$   & 2.87e-2 & 1.58e-2 & 8.17e-3 & 3.99e-3 & 1.81e-3 & 8.20e-4 & 1.02 (1.00) \\
		                    &$0.7$   & 2.21e-2 & 8.94e-3 & 3.28e-3 & 1.12e-3 & 3.67e-4 & 1.18e-4 & 1.50 (1.20) \\
		                    &$0.9$   & 1.23e-2 & 3.53e-3 & 9.64e-4 & 2.57e-4 & 6.76e-5 & 1.77e-5 & 1.88 (1.40) \\
		\cline{1-9}
		\multirow{3}{*}{(c)}&$0.3$   & 2.90e-1 & 2.36e-1 & 1.90e-1 & 1.52e-1 & 1.18e-1 & 9.08e-2 & 0.33 (0.50) \\
		                    &$0.5$   & 2.44e-1 & 1.73e-1 & 1.18e-1 & 7.90e-2 & 5.10e-2 & 3.27e-2 & 0.58 (0.70) \\
		                    &$0.7$   & 1.80e-1 & 1.01e-1 & 5.48e-2 & 2.89e-2 & 1.51e-2 & 8.01e-3 & 0.89 (0.90) \\
		                    &$0.9$   & 1.10e-1 & 4.92e-2 & 2.15e-2 & 9.55e-3 & 4.29e-3 & 1.95e-3 & 1.16 (1.10) \\
		\cline{1-9}
		
		\multirow{3}{*}{(d)}&$0.3$   & 2.90e-1 & 2.36e-1 & 1.90e-1 & 1.52e-1 & 1.18e-1 & 9.10e-2 & 0.33 (0.50) \\
		                    &$0.5$   & 2.45e-1 & 1.73e-1 & 1.19e-1 & 7.96e-2 & 5.16e-2 & 3.34e-2 & 0.57 (0.70) \\
		                    &$0.7$   & 1.81e-1 & 1.02e-1 & 5.59e-2 & 2.99e-2 & 1.60e-2 & 8.66e-3 & 0.87 (0.90) \\
		                    &$0.9$   & 1.12e-1 & 5.11e-2 & 2.31e-2 & 1.05e-2 & 4.81e-3 & 2.21e-3 & 1.13 (1.10) \\
		\hline
	\end{tabular}
\end{table}

\begin{table}[hbt!]
\caption{The relative error $\|u(\cdot,T) - u_{h \tau}(\cdot,T)\|_{L^2(\Omega)}/\|u(\cdot,T)\|_{L^2(\Omega)}$
at the time $T=1$ for examples (c)-(d) with $\alpha=0.3$, $0.5$, $0.7 $ and $0.9$, and $h=1/2000$. }\label{tab:exp2-nodes}
	\centering
		\begin{tabular}{|c|c|cccccc|c|}
			\hline
			case& $\alpha\backslash K $   &$10$ &$20$ & $40$ & $80$ &$160$ &$320$& rate\\
			\hline
			\multirow{3}{*}{(c)}& $0.3$  & 6.20e-3 & 2.46e-3 & 9.83e-4 & 3.92e-4 & 1.54e-4 & 5.91e-5 & 1.34 $(--)$ \\
			&$0.5$                       & 2.26e-3 & 7.82e-4 & 2.71e-4 & 9.39e-5 & 3.23e-5 & 1.08e-5 & 1.54 $(--)$\\
			&$0.7$                       & 4.37e-4 & 1.36e-4 & 4.16e-5 & 1.26e-5 & 3.82e-6 & 1.13e-6 & 1.71 $(--)$\\
			&$0.9$                       & 3.13e-4 & 6.68e-5 & 1.57e-5 & 3.63e-6 & 7.96e-7 & 1.60e-7 & 2.18 $(--)$\\
            \cline{1-9}
			\multirow{3}{*}{(d)}&$0.3$   & 6.20e-3 & 2.46e-3 & 9.83e-4 & 3.92e-4 & 1.54e-4 & 5.91e-5 & 1.34 $(--)$\\
			                    &$0.5$   & 2.26e-3 & 7.82e-4 & 2.70e-4 & 9.38e-5 & 3.23e-5 & 1.08e-5 & 1.54 $(--)$\\
			                    &$0.7$   & 4.54e-4 & 1.36e-4 & 4.16e-5 & 1.26e-5 & 3.81e-6 & 1.13e-6 & 1.73 $(--)$\\
                                &$0.9$   & 2.46e-3 & 1.19e-4 & 1.58e-5 & 3.63e-6 & 7.98e-7 & 1.60e-7 & 2.78 $(--)$\\
			\hline
         \end{tabular}
\end{table}

\subsection{2-D fractional PDEs}
Last, we consider two examples in two space dimension, with the domain $\Omega=(0,1)^2$ and $T=1$, and
perform numerical test on the following data
\begin{enumerate}
  \item[(e)] $f(x,y,t)= x(1-x)y(1-y)\sin t$ is smooth in time, with $\sin t \in \widetilde{H}_L^1(0,T)$.
  \item[(f)] $f(x,y,t)=x(1-x)y(1-y)t^{-0.3}$ is nonsmooth in time, with $t^{-0.3}\in \widetilde{H}_L^s(0,T)$, $s<0.2$.
\end{enumerate}

In either case, the source term is compatible with the zero initial data. In the computation, we
first divide the unit interval $(0,1)$ into $M$ equally spaced subintervals with a mesh size
$h = 1/M$, which partitions the domain $\Omega$ into $M^2$ small squares. Then we obtain regular partition of the domain
by connecting the diagonals. The results for cases (e) and (f) are given in Table \ref{tab:exam2D},
where the spatial mesh size $h$ is fixed at $h=1/100$ and the reference solution is computed with $\tau=1/2000$.
In case (e), the source $f$ is smooth and compatible, and the empirical convergence agrees well
with the theoretical prediction. In case (f), $f$ is nonsmooth in time, and the convergence
for small fractional order $\alpha$ suffers some loss, similar to the one-dimensional case.
\begin{table}[hbt!]
\centering
\caption{The relative error $\|u - u_{h \tau}\|_{L^2(Q_T)}/\|u \|_{L^2(Q_T)}$
for examples (e) and (f) with $\alpha=0.3$, $0.5$, $0.7$ and $0.9$, and $h=1/100$.}\label{tab:exam2D}
\begin{tabular}{|c|c|cccccc|c|}
	\hline
	case & $\alpha\backslash K $   &$10$ &$20$ & $40$ & $80$ &$160$ &$320$& rate\\
	\hline
	\multirow{3}{*}{(e)}
    & $0.3$  & 1.50e-2 & 6.15e-3 & 2.52e-3 & 1.05e-3 & 4.26e-4 & 1.75e-4 & 1.28 (1.30) \\
	&$0.5$   & 8.38e-3 & 3.06e-3 & 1.10e-3 & 4.02e-4 & 1.41e-4 & 5.05e-5 & 1.47 (1.50)\\
	&$0.7$   & 5.65e-3 & 1.88e-3 & 6.00e-4 & 1.85e-4 & 5.54e-5 & 1.67e-5 & 1.68 (1.70) \\
	&$0.9$   & 4.46e-3 & 1.30e-3 & 3.52e-4 & 9.28e-5 & 2.41e-5 & 6.31e-6 & 1.89 (1.90) \\
          \cline{1-9}
	\multirow{3}{*}{(f)}
    &$0.3$   & 3.31e-1 & 2.78e-1 & 2.31e-1 & 1.91e-1 & 1.54e-1 & 1.22e-1 & 0.28 (0.50)\\
	&$0.5$   & 3.15e-1 & 2.39e-1 & 1.77e-1 & 1.27e-1 & 8.74e-2 & 5.90e-2 & 0.48 (0.70)\\
	&$0.7$   & 2.76e-1 & 1.73e-1 & 1.01e-1 & 5.60e-2 & 2.98e-2 & 1.58e-2 & 0.82 (0.90)\\
    &$0.9$   & 2.06e-1 & 9.81e-2 & 4.37e-2 & 1.92e-2 & 8.50e-3 & 3.81e-3 & 1.15 (1.10)\\
	\hline
\end{tabular}
\end{table}

\section{Concluding Remarks}\label{sec:conclusions}
We have explored the viability of space-time discretizations for numerically solving
time-dependent fractional-order differential equations, and proposed a novel
Petrov-Galerkin finite element method on the pair of spaces ${\mathbb X}_h \times  \bVt $
as the trial space and ${\mathbb X}_h \times  {\mathbb W}_\tau $
as the test space, where the space $ \bVt$
consists of fractionalized piecewise constant
functions. One distinct feature of our approach
is that it leads to an unconditionally  stable time stepping scheme.
It may have interesting applications to
other types of fractional-order differential equations.\\

\section*{Acknowledgements}
B. Duan is supported by China Scholarship Council and the Fundamental Research Funds for
the Central Universities of Central South University (2016zzts015). The work of B. Jin is
partially supported by UK EPSRC grant EP/M025160/1, that of R. Lazarov by grant NSF-DMS 1620318
and that of Z. Zhou by the AFOSR MURI Center for Material Failure Prediction through
Peridynamics and the ARO MURI Grant W911NF-15-1-0562.\\

\bibliographystyle{abbrv}
\bibliography{references}

\begin{thebibliography}{10}

\bibitem{Maubach_1989}
O.~Axelsson and J.~Maubach.
\newblock A time-space finite element discretization technique for the
  calculation of the electromagnetic field in ferromagnetic materials.
\newblock {\em Int. J. Numer. Methods Engrg.}, 28(9):2085--2111, 1989.

\bibitem{bank2017arbitrary}
R.~E. Bank, P.~S. Vassilevski, and L.~T. Zikatanov.
\newblock Arbitrary dimension convection-diffusion schemes for space-time
  discretizations.
\newblock {\em J. Comput. Appl. Math.}, 310:19--31, 2017.

\bibitem{Dahmen:2003}
W.~Dahmen, B.~Faermann, I.~G. Graham, W.~Hackbusch, and S.~A. Sauter.
\newblock Inverse inequalities on non-quasi-uniform meshes and application to
  the mortar element method.
\newblock {\em Math. Comput.}, 73(247):1107--1138, 2003.

\bibitem{DiPietroErn2010}
D.~Di~Pietro and A.~Ern.
\newblock Discrete functional analysis tools for discontinuous {G}alerkin
  methods with application to the incompressible {N}avier-{S}tokes equations.
\newblock {\em Math. Comput.}, 79(271):1303--1330, 2010.

\bibitem{ewing1990finite}
R.~E. Ewing, R.~D. Lazarov, and P.~S. Vassilevski.
\newblock Finite difference schemes on grids with local refinement in time and
  space for parabolic problems {I}. {D}erivation, stability, and error
  analysis.
\newblock {\em Computing}, 45(3):193--215, 1990.

\bibitem{RiviereGirault2016}
V.~Girault, J.~Li, and B.~Rivi\`ere.
\newblock Strong convergence of discrete {DG} solutions of the heat equation.
\newblock {\em J. Numer. Math.}, 24(4):235--252, 2016.

\bibitem{GorenfloLuchkoYamamoto:2015}
R.~Gorenflo, Y.~Luchko, and M.~Yamamoto.
\newblock Time-fractional diffusion equation in the fractional {S}obolev
  spaces.
\newblock {\em Fract. Calc. Appl. Anal.}, 18(3):799--820, 2015.

\bibitem{Grisvard}
P.~Grisvard.
\newblock {\em Elliptic {P}roblems in {N}onsmooth {D}omains}.
\newblock Pitman, Boston, MA, 1985.

\bibitem{HytonenWeis:2016}
T.~Hyt\"{o}nen, J.~{van Neerven}, M.~Veraar, and L.~Weis.
\newblock {\em Analysis in {B}anach {S}paces}.
\newblock Springer, 2016.

\bibitem{Jin_MathComp_2015variational}
B.~Jin, R.~Lazarov, J.~Pasciak, and W.~Rundell.
\newblock Variational formulation of problems involving fractional order
  differential operators.
\newblock {\em Math. Comput.}, 84(296):2665--2700, 2015.

\bibitem{JinLazarovPasciakZhou:2015}
B.~Jin, R.~Lazarov, J.~Pasciak, and Z.~Zhou.
\newblock Error analysis of semidiscrete finite element methods for
  inhomogeneous time-fractional diffusion.
\newblock {\em IMA J. Numer. Anal.}, 35(2):561--582, 2015.

\bibitem{JinLazarovZhou:2013}
B.~Jin, R.~Lazarov, and Z.~Zhou.
\newblock Error estimates for a semidiscrete finite element method for
  fractional order parabolic equations.
\newblock {\em SIAM J. Numer. Anal.}, 51(1):445--466, 2013.

\bibitem{Jin2015petrovSINUM}
B.~Jin, R.~Lazarov, and Z.~Zhou.
\newblock A {P}etrov--{G}alerkin finite element method for fractional
  convection-diffusion equations.
\newblock {\em SIAM J. Numer. Anal.}, 54(1):481--503, 2016.

\bibitem{JinLazarovZhou:2016sisc}
B.~Jin, R.~Lazarov, and Z.~Zhou.
\newblock Two fully discrete schemes for fractional diffusion and
  diffusion-wave equations with nonsmooth data.
\newblock {\em SIAM J. Sci. Comput.}, 38(1):A146--A170, 2016.

\bibitem{JinLiZhou:2016ima}
B.~Jin, B.~Li, and Z.~Zhou.
\newblock An analysis of the {Crank-Nicolson} method for subdiffusion.
\newblock {\em IMA J. Numer. Anal.}, page in press. arXiv:1607.06948v2, 2017.

\bibitem{KilbasSrivastavaTrujillo:2006}
A.~Kilbas, H.~Srivastava, and J.~Trujillo.
\newblock {\em Theory and {A}pplications of {F}ractional {D}ifferential
  {E}quations}.
\newblock Elsevier, Amsterdam, 2006.

\bibitem{LarssonMolteni:2016}
S.~Larsson and M.~Molteni.
\newblock Numerical solution of parabolic problems based on a weak space-time
  formulation.
\newblock {\em Comput. Methods Appl. Math.}, 17(1):65--84, 2016.

\bibitem{li2016convolution}
L.~Li and J.-G. Liu.
\newblock On convolution groups of completely monotone sequences/functions and
  fractional calculus.
\newblock {\em arXiv preprint arXiv:1612.05103}, 2016.

\bibitem{Li_Xu_SINUM_2009}
X.~Li and C.~Xu.
\newblock A space-time spectral method for the time fractional diffusion
  equation.
\newblock {\em SIAM J. Numer. Anal.}, 47(3):2108--2131, 2009.

\bibitem{LiXu:2010}
X.~Li and C.~Xu.
\newblock Existence and uniqueness of the weak solution of the space-time
  fractional diffusion equation and a spectral method approximation.
\newblock {\em Commun. Comput. Phys.}, 8(5):1016--1051, 2010.

\bibitem{Lubich:1988}
C.~Lubich.
\newblock Convolution quadrature and discretized operational calculus. {I}.
\newblock {\em Numer. Math.}, 52(2):129--145, 1988.

\bibitem{metzler2014anomalous}
R.~Metzler, J.-H. Jeon, A.~G. Cherstvy, and E.~Barkai.
\newblock Anomalous diffusion models and their properties: non-stationarity,
  non-ergodicity, and ageing at the centenary of single particle tracking.
\newblock {\em Phys. Chem. Chem. Phys.}, 16(44):24128--24164, 2014.

\bibitem{MeyriesSchnaubelt:2012}
M.~Meyries and R.~Schnaubelt.
\newblock Interpolation, embeddings and traces of anisotropic fractional
  {S}obolev spaces with temporal weights.
\newblock {\em J. Funct. Anal.}, 262(3):1200--1229, 2012.

\bibitem{mollet2014stabilityNo}
C.~Mollet.
\newblock Uniform stability of {P}etrov-{G}alerkin discretizations of boundedly
  invertible operators: application to the space-time weak formulation for
  parabolic evolution problems.
\newblock {\em Comput. Methods Appl. Math.}, 14(2):231--255, 2014.

\bibitem{MustaphaAbadallah:2014}
K.~Mustapha, B.~Abdallah, and K.~M. Furati.
\newblock A discontinuous {P}etrov-{G}alerkin method for time-fractional
  diffusion equations.
\newblock {\em SIAM J. Numer. Anal.}, 52(5):2512--2529, 2014.

\bibitem{NeumullerSteinbach:2011}
M.~Neum{\"u}ller and O.~Steinbach.
\newblock Refinement of flexible space-time finite element meshes and
  discontinuous {G}alerkin methods.
\newblock {\em Comput. Vis. Sci.}, 14(5):189--205, 2011.

\bibitem{SchwabStevenson:2009}
C.~Schwab and R.~Stevenson.
\newblock Space-time adaptive wavelet methods for parabolic evolution problems.
\newblock {\em Math. Comput.}, 78(267):1293--1318, 2009.

\bibitem{Steinbach_CMAM_2015}
O.~Steinbach.
\newblock Space-time finite element methods for parabolic problems.
\newblock {\em Comput. Methods Appl. Math.}, 15(4):551--566, 2015.

\bibitem{Thomee:2006}
V.~Thom{\'e}e.
\newblock {\em Galerkin {F}inite {E}lement {M}ethods for {P}arabolic
  {P}roblems}.
\newblock Springer-Verlag, Berlin, 2006.

\bibitem{ZayernouriAinsworth:2015}
M.~Zayernouri, M.~Ainsworth, and G.~E. Karniadakis.
\newblock A unified {P}etrov-{G}alerkin spectral method for fractional {PDE}s.
\newblock {\em Comput. Methods Appl. Mech. Engrg.}, 283:1545--1569, 2015.

\end{thebibliography}

\end{document}